%Jie Qing Nov 29 2012
%Jie Qing Sept 13 2012
%Jie QIng Aug 2012
%Jie Qing Mar 25 2011
%Vincent Bonini Mar 24 2011
%Vincent Bonini Mar 14 2011
% Jie Qing Mar 7 2011
%  Jose Espinar Feb 22 2011
% Jie Qing Feb 15 2011
%  Jose Espinar Jan 27 2011
%  Jie Qing Jan 23 2011
%  Jose Espinar Dec 20 2010 

\documentclass[12pt]{article}
\usepackage{latexsym,amsmath,enumerate,amssymb,epsf,graphicx}

\newfont{\bb}{msbm10 at 12pt}
\newfont{\tbb}{msbm10 at 9pt}
\def\l{\hbox{\bb L}}
\def\r{\hbox{\bb R}}
\def\b{\hbox{\bb B}}

\def\h{\hbox{\bb H}}
\def\n{\hbox{\bb N}}

\def\s{\hbox{\bb S}}
\def\ts{\hbox{\tbb S}}
\def\th{\hbox{\tbb H}}

\def\tr{\hbox{\tbb R}}

\def\gH{g_{\mathbb{H}^{n+1}}}

\newcommand{\lsch}{\lambda ({\rm Sch}_g)}

\newcommand{\set}[1]{\left\{#1\right\}}
\newcommand{\meta}[2]{\langle #1,#2 \rangle }

\newcommand{\To}{\longrightarrow }
\newcommand{\Diff}{\text{Con}(\s ^n) }
\newcommand{\IsoH}{\text{Iso}(\h ^{n+1})}

\usepackage[latin1]{inputenc}
\topmargin 0cm \textheight = 40\baselineskip \textwidth 16cm \oddsidemargin 0.3cm
\evensidemargin 0.4cm
\usepackage {amsmath}
\usepackage {amsthm}
\usepackage{times}
\usepackage{amscd}
\usepackage{epsf}

\numberwithin{equation} {section}

\begin{document}
%\mbox{}\vspace{0.2cm}\mbox{}

\theoremstyle{plain}\newtheorem{lemma}{Lemma}[section]
\theoremstyle{plain}\newtheorem{proposition}{Proposition}[section]
\theoremstyle{plain}\newtheorem{theorem}{Theorem}[section]
\theoremstyle{plain}\newtheorem{example}{Example}[section]
\theoremstyle{plain}\newtheorem{remark}{Remark}[section]
\theoremstyle{plain}\newtheorem{corollary}{Corollary}[section]
\theoremstyle{plain}\newtheorem{definition}{Definition}[section]
\theoremstyle{plain}\newtheorem{acknowledge}{Acknowledgment}

\begin{center}
\rule{15cm}{1.5pt} \vspace{.6cm}

%{\Large \bf On Horospherically Convex Hypersurfaces in $\h^{n+1}$\\[3mm]  and Support Functions } 
{\Large \bf Hypersurfaces in Hyperbolic Space with Support Function} 
%{\Large \bf On Hypersurfaces in Hyperbolic Space and \\[3mm]  Horospherical Support Functions } 
%{\Large \bf On Complete Horospherically Convex Hypersurfaces in $\h^{n+1}$} 

\vspace{0.9cm}

{\large Vincent Bonini$\, ^\dag$ , Jos\'{e} M. Espinar$\,^\ddag$\footnote{The author is partially supported by Spanish MEC-FEDER Grant MTM2010-19821 and CNPQ-Brazil.} 
and Jie Qing$\,^\star$\footnote{The author is partially supported by NSF DMS-1005295 and CNSF 10728103}}\\

\vspace{0.3cm} \rule{15cm}{1.5pt}
\end{center}

\vspace{.2cm}
\noindent$\mbox{}^\dag$ Department of Mathematics, Cal Poly State University, San Luis Obispo, CA 93407; \\e-mail: vbonini@calpoly.edu \vspace{0.2cm}

\noindent $\mbox{}^\ddag$  Instituto de Matem\'{a}tica Pura e Aplicada, 110 Estrada Dona Castorina, Rio de Janeiro \\ 22460-320, Brazil; 
e-mail: jespinar@impa.br\vspace{0.2cm}

\noindent $\mbox{}^\star$ Department of Mathematics, University of California, Santa Cruz, CA 95064; \\
e-mail: qing@ucsc.edu

\begin{abstract} We develop a global correspondence between immersed horospherically convex hypersurfaces $\phi: {\rm M}^n\to\h^{n+1}$ and complete conformal metrics $e^{2\rho}g_{\ts^n}$ on domains $\Omega$ in the boundary $\s^n$ at infinity of $\h^{n+1}$, where $\rho$ is the horospherical support function,  $\partial_\infty\phi({\rm M}^n) = \partial\Omega$ , and $\Omega$ is the image of the Gauss map $G:{\rm M}^n\to \s^n$. To do so we first establish results on when the Gauss map $G: {\rm M}^n\to \s^n$ is injective. We also discuss when an immersed horospherically convex hypersurface can be unfolded along the normal flow into an embedded one. These results allow us to establish 
general Alexandrov reflection principles for elliptic problems of both immersed hypersurfaces in $\h^{n+1}$ and conformal metrics 
on domains in $\s^n$. Consequently, we are able to obtain, for instance, a strong Bernstein theorem for a complete, immersed, horospherically convex hypersurface in $\h^{n+1}$ of constant mean curvature.
\end{abstract}

%--------------------------------------------------
%--------------------------------------------------

\section{Introduction}

In a recent paper \cite{EGM}, the authors observed a very interesting fact that the extrinsic curvature of a horospherically convex hypersurface $\phi: {\rm M}^n\to \h^{n+1}$ ($n\geq 3$)
can be calculated via its horospherical support function $\rho$ as follows:
\begin{equation}\label{curvature-relation}
\lambda_i = \frac 12 - \frac 1{1-\kappa_i}
\end{equation}
where the $\lambda_i$ are the eigenvalues of the Schouten tensor of the horospherical metric $\hat g = e^{2\rho}g_{\ts^n}$ 
and the $\kappa_i$ are the principal curvatures of the hypersurface $\phi$  (see also \cite{Eps2, Eps3}). This observation creates a correspondence that opens a window for more interactions between the study of elliptic problems of Weingarten surfaces in hyperbolic space and the study of elliptic problems of conformal metrics. We will assume throughout the paper that the dimension $n\geq 3$ 
or as stated otherwise. \\

Later it was pointed out in \cite{BEQ} that such correspondence can be seen as the association of a conformal metric at infinity with level surfaces of the geodesic defining functions of the conformal metric. In fact, the level surfaces of the geodesic defining function form the regular part of the normal flow (cf. \cite{Eps3}) of the horospherically convex hypersurfaces both in the hyperbolic metric and the conformally compactified metric. We refer to the part of the normal flow where each leaf is embedded as the regular part. \\

At first the horospherical support function $\tilde\rho$ is defined on the parameter space ${\rm M}^n$ of an immersed horospherically convex hypersurface $\phi: {\rm M}^n\to \h^{n+1}$. 
Hence the so-called horospherical metric  $g_h = e^{2\tilde\rho}G^*g_{\ts^n}$ is originally defined on ${\rm M}^n$ too. It is much more useful if the horospherical support function 
$\tilde\rho$ as well as the horospherical metric $g_h$ can be pushed on a domain in $\s^n$ through the Gauss map $G: {\rm M}^n\to \s^n$.  Indeed, when the Gauss 
map is injective, we may view the hypersurface as a ``graph" of the horospherical support function $\rho = \tilde\rho\cdot G^{-1}$ over the domain $G({\rm M}^n)$ in
$\s^n$.  Though the Gauss map of a compact horospherically convex hypersurface is always injective, the Gauss map of an immersed, 
complete, horospherically convex hypersurface in general may not be injective. \\

We notice that the Gauss map of a horospherically convex hypersurface is naturally a
development map.  Hence, as a consequence of the celebrated injectivity result of Schoen and Yau \cite{SY, SY1}, we obtain the following:

\begin{theorem}\label{quick} Suppose that $\phi: {\rm M}^n\to\h^{n+1}$ is an immersed, complete, horospherically convex hypersurface and suppose that 
\begin{equation}\label{R>=0}
\sum_{i=1}^n\frac 2{1- \kappa_i} \leq n,
\end{equation}
where $\kappa_i$ are principal curvatures of $\phi$. Then its Gauss map is injective.
\end{theorem}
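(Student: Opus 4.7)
The plan is to realize the Gauss map $G:M^n\to\s^n$ as the developing map of a locally conformally flat metric on $M^n$ and apply the Schoen--Yau injectivity theorem from \cite{SY, SY1}. The horospherical metric $g_h=e^{2\tilde\rho}G^*g_{\ts^n}$ on $M^n$ makes $G:(M^n,g_h)\to (\s^n,g_{\ts^n})$ a conformal local diffeomorphism, provided $G$ has no critical points. Horospherical convexity (which keeps every principal curvature on the same side of the horospherical threshold $\kappa_i=1$, so that the relation \eqref{curvature-relation} is well-defined) is precisely what ensures $G$ is a local immersion and $g_h$ a genuine Riemannian metric. Thus $(M^n,g_h)$ is locally conformally flat with developing map $G$.

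Next I would translate hypothesis \eqref{R>=0} into a curvature condition on $g_h$. Summing \eqref{curvature-relation},
$$
\sum_{i=1}^{n}\lambda_i \;=\; \frac{n}{2}-\sum_{i=1}^{n}\frac{1}{1-\kappa_i} \;=\; \frac{1}{2}\left(n-\sum_{i=1}^{n}\frac{2}{1-\kappa_i}\right)\;\geq\;0,
$$
so the trace of the Schouten tensor of $g_h$, and hence the scalar curvature $R_{g_h}$, is nonnegative on $M^n$.

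The analytic step, which I expect to be the main obstacle, is to upgrade completeness of $(M^n,\phi^*g_{\mathbb{H}^{n+1}})$ to completeness of $(M^n,g_h)$. Under horospherical convexity the Gauss map relates the induced metric on $M^n$ and $G^*g_{\ts^n}$ through an explicit conformal factor involving $\tilde\rho$ and the principal curvatures $\kappa_i$; controlling this factor (or comparing $g_h$ directly with $\phi^*g_{\mathbb{H}^{n+1}}$ along any divergent sequence) is what makes divergent sequences in the hyperbolic induced metric remain divergent in $g_h$. This is essentially where the geometry of horospherically convex hypersurfaces is used in an essential way.

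Once completeness of $(M^n,g_h)$ is established, the three hypotheses of the Schoen--Yau injectivity result are in place: $(M^n,g_h)$ is a complete, locally conformally flat Riemannian manifold with $R_{g_h}\geq 0$, and its developing map to $\s^n$ is exactly the Gauss map $G$. Their theorem then yields that $G:M^n\to\s^n$ is injective (in fact a conformal diffeomorphism onto its image, whose complement has small Hausdorff dimension), completing the proof. The scalar-curvature computation above is purely algebraic; the subtlety concentrates in the metric comparison that gives completeness of $g_h$.
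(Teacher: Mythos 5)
Your strategy coincides with the paper's: realize $G$ as the developing map of the locally conformally flat manifold $({\rm M}^n, g_h)$, show that \eqref{R>=0} makes the scalar curvature of $g_h$ nonnegative, and invoke the Schoen--Yau injectivity theorem. The Schouten-trace computation is correct. However, you leave the completeness of $g_h$ --- which you yourself identify as the crux --- unproved: you only gesture at ``controlling the conformal factor'' relating $g_h$ to the induced metric. Mere horospherical convexity gives $1-\kappa_i>0$ pointwise, and that alone does not suffice; the factor $(1-\kappa_i)^2$ appearing in \eqref{metpsi} could degenerate along a divergent sequence, and the paper exhibits in Section \ref{Sect: uniform-bounded} a properly immersed, complete, horospherically convex hypersurface whose horospherical metric is \emph{not} complete. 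So as written the argument has a genuine gap at exactly the step you flagged.

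The missing observation is that hypothesis \eqref{R>=0} itself closes the gap. With the orientation normalized so that $\kappa_i<1$, every summand $\tfrac{2}{1-\kappa_i}$ is positive, so $\sum_{i}\tfrac{2}{1-\kappa_i}\le n$ forces $\tfrac{2}{1-\kappa_i}\le n$ for each $i$ separately, i.e. $\kappa_i\le 1-\tfrac{2}{n}$: the hypersurface is \emph{uniformly} horospherically convex in the sense of Definition \ref{uniform-cv}. Then \eqref{metpsi} gives, in an orthonormal frame $\{e_i\}$ of principal directions for the induced metric,
\begin{equation*}
g_h(e_i,e_i)=(1-\kappa_i)^2\ \ge\ \left(\frac{2}{n}\right)^{2},
\end{equation*}
so $g_h\ge (2/n)^2\,\phi^*\gH$, and completeness of $g_h$ follows immediately from completeness of the hypersurface (this is Lemma \ref{uniform-complete}). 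With that one line supplied, your proof is exactly the paper's.
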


In general, to avoid wild behavior of the end of a horospherically convex hypersurface, we require that  the Gauss map is regular at infinity (cf. Definition \ref{regular-end}).  An immediate consequence of such regularity is the following:

\begin{lemma} Suppose that $\phi: {\rm M}^n\to\h^{n+1}$ is a properly immersed, complete, horospherically convex hypersurface with the Gauss map $G$ regular at infinity. Then 
\begin{equation}
\partial G({\rm M}^n) \subseteq \partial_\infty\phi({\rm M}^n).
\end{equation}
\end{lemma}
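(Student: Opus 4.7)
The plan is to pick a sequence of preimages under $G$, use properness to control where the hypersurface sits, and use regularity at infinity to match the two limits in $\s^n$.

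First I would pick $x\in\partial G({\rm M}^n)$ and choose a sequence $\{p_k\}\subset{\rm M}^n$ with $G(p_k)\to x$. Recall that for a horospherically convex hypersurface the Gauss map $G:{\rm M}^n\to\s^n$ is a local diffeomorphism; in particular, $G$ is an open map. This immediately rules out any accumulation of $\{p_k\}$ at an interior point $p_\ast\in{\rm M}^n$, since otherwise $G(p_\ast)=x$ would belong to the interior of $G({\rm M}^n)$, contradicting $x\in\partial G({\rm M}^n)$. Hence $\{p_k\}$ leaves every compact subset of ${\rm M}^n$.

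Next I would invoke properness of $\phi$. Since $p_k\to\infty$ in ${\rm M}^n$, properness forces $\{\phi(p_k)\}$ to leave every compact subset of $\h^{n+1}$. Passing to a subsequence, there is a limit $y\in\s^n=\partial_\infty \h^{n+1}$ with $\phi(p_k)\to y$ in the standard compactification $\overline{\h^{n+1}}$. By the definition of asymptotic boundary, $y\in\partial_\infty\phi({\rm M}^n)$.

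Finally I would use the hypothesis that $G$ is regular at infinity (Definition \ref{regular-end}): this regularity forces the Gauss image $G(p_k)$ and the footpoint $\phi(p_k)$ to share the same asymptotic limit in $\s^n$ whenever $p_k$ escapes to infinity in ${\rm M}^n$. Consequently $y=\lim_k\phi(p_k)=\lim_k G(p_k)=x$, and thus $x\in\partial_\infty\phi({\rm M}^n)$, which is the desired inclusion.

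The main obstacle is essentially matching the two notions of limit in $\s^n$, namely the limit of the Gauss image and the asymptotic limit of the hypersurface; this is precisely what the hypothesis that $G$ is regular at infinity is designed to handle, so one really has to unwind that definition carefully (e.g.\ checking that along a sequence $p_k\to\infty$ the support function $\tilde\rho(p_k)$ stays bounded, so that $\phi(p_k)$ is asymptotic to the horosphere tangent to $\s^n$ at $G(p_k)$). The remaining ingredients, local diffeomorphism of the Gauss map and properness of $\phi$, enter only through the standard open-map and compactness arguments sketched above.
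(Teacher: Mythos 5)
Your argument is correct and takes essentially the same route as the paper's proof: choose preimages of a sequence converging to a point of $\partial G({\rm M}^n)$, use properness (together with the fact that $G({\rm M}^n)$ is open, so an interior accumulation point is impossible) to force $\phi(p_k)$ to escape to a point of $\partial_\infty\phi({\rm M}^n)$, and then invoke regularity of $G$ at infinity to identify that limit with $\lim_k G(p_k)$. The only cosmetic difference is that the paper phrases the argument contrapositively.
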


Using the uniformly horospherical convexity (cf. Definition \ref{uniform-cv}) to ensure the completeness of the horospherical metric, 
we then establish the following injectivity theorem:

\begin{theorem} Suppose that $\phi: {\rm M}^n\to\h^{n+1}$ is a properly immersed, complete, uniformly horospherically convex hypersurface with the Gauss map regular at
infinity. And suppose that the boundary at infinity $\partial_\infty\phi({\rm M}^n)$ is small in the sense that its Hausdorff dimension is less than $n-2$. Then the 
Gauss map $G: {\rm M}^n\to\s^n$ is injective.
\end{theorem}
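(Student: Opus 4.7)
The plan is to realize $G$ as a covering map onto its image $\Omega := G({\rm M}^n)$ and to show that $\Omega$ is simply connected; injectivity of $G$ then follows because a connected covering of a simply connected space is trivial. Uniform horospherical convexity makes $G$ a local diffeomorphism and ensures completeness of the horospherical metric $g_h = e^{2\tilde\rho} G^* g_{\ts^n}$ on ${\rm M}^n$. The preceding lemma supplies the crucial containment $\partial\Omega \subseteq \bi\phi({\rm M}^n)$, so $\partial\Omega$ is closed with Hausdorff dimension strictly less than $n-2$.

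First I would pin down the topology of $\Omega$. Because $\partial\Omega$ has Hausdorff (hence topological) codimension at least two, it cannot separate $\s^n$, so $\s^n\setminus\partial\Omega$ is connected. This set is the disjoint union of the two open pieces $\Omega$ and $\s^n\setminus\overline{\Omega}$; since $\Omega$ is nonempty, the second must be empty, so $\Omega$ is dense in $\s^n$ and $\s^n\setminus\Omega = \partial\Omega$. The complement of $\Omega$ now has topological codimension at least three, and since $n\geq 3$ a standard topological argument (via Alexander duality) gives $\pi_1(\Omega)=0$.

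Next I would show that $G:{\rm M}^n\to\Omega$ is a covering map. Fix $y\in\Omega$ and a closed geodesic ball $\bar B_r(y)\subset\Omega$ disjoint from $\bi\phi({\rm M}^n)$; such a ball exists because $\bi\phi({\rm M}^n)$ is closed and does not contain $y$. Given any $p\in G^{-1}(y)$, local invertibility of $G$ initiates a lift through $G$ of every continuous path in $\bar B_r(y)$ starting at $y$. Suppose for contradiction some lift $\tilde\gamma:[0,t_0)\to{\rm M}^n$ fails to extend, so $\tilde\gamma(t)$ leaves every compact subset of ${\rm M}^n$ as $t\to t_0$. Properness of $\phi$ forces $\phi(\tilde\gamma(t))$ to accumulate on $\bi\phi({\rm M}^n)$, and the assumption that $G$ is regular at infinity matches this with the accumulation of $G(\tilde\gamma(t))=\gamma(t)$. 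But $\gamma$ remains inside $\bar B_r(y)$, which is disjoint from $\bi\phi({\rm M}^n)$, a contradiction. Hence every lift extends to $[0,1]$, each component of $G^{-1}(\bar B_r(y))$ is carried diffeomorphically onto $\bar B_r(y)$, and $G$ is a covering map. Simple connectivity of $\Omega$ then forces the covering to be trivial, so $G$ is a global diffeomorphism, in particular injective.

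The principal obstacle is the path-lifting step, namely turning the regularity of $G$ at infinity into the concrete statement that a divergent lift in ${\rm M}^n$ cannot have its $G$-image confined to a compact subset of $\Omega$. This calls for a careful reading of the definition of regularity at infinity together with proper immersion of $\phi$, and the completeness of $g_h$ supplied by uniform horospherical convexity is what underlies this compatibility; once it is in hand, the remaining steps are essentially topology.
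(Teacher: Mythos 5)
Your overall architecture is the same as the paper's: use the preceding lemma to see that $\partial G({\rm M}^n)\subset\partial_\infty\phi({\rm M}^n)$ has Hausdorff dimension less than $n-2$, conclude that $\Omega=G({\rm M}^n)$ is connected and simply connected, and then deduce injectivity from the fact that $G$ is a covering of $\Omega$. The topological part of your argument is fine and is essentially what the paper does. Where you diverge is the covering-map step: the paper invokes the Kulkarni--Pinkall theorem (the Gauss map is a development map of the complete horospherical metric, hence a covering onto its image, with completeness supplied by uniform horospherical convexity), whereas you attempt a direct path-lifting proof from properness and regularity at infinity.

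That direct proof has a genuine gap. For each $y\in\Omega$ you choose a closed ball $\bar B_r(y)\subset\Omega$ disjoint from $\partial_\infty\phi({\rm M}^n)$, asserting this is possible ``because $\partial_\infty\phi({\rm M}^n)$ is closed and does not contain $y$.'' But nothing in the hypotheses rules out $y\in G({\rm M}^n)\cap\partial_\infty\phi({\rm M}^n)$: the hypersurface may accumulate at a point of $\s^n$ that is simultaneously attained as a Gauss image elsewhere on ${\rm M}^n$, and the paper's Proposition \ref{finite-covering} explicitly treats $G({\rm M}^n)\subset\partial_\infty\phi({\rm M}^n)$ as a live alternative. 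The smallness of the Hausdorff dimension only gives that $\partial_\infty\phi({\rm M}^n)$ has empty interior, not that it misses $\Omega$. The gap is repairable within your framework: set $A=\partial_\infty\phi({\rm M}^n)$, observe that $\Omega\setminus A$ remains connected and simply connected and that ${\rm M}^n\setminus G^{-1}(A)$ remains connected (one is removing closed sets of codimension greater than two, and $G$ is a local diffeomorphism so $G^{-1}(A)$ is locally of the same small dimension), run your lifting argument over $\Omega\setminus A$ to get injectivity of $G$ on the dense open set ${\rm M}^n\setminus G^{-1}(A)$, and then extend injectivity to all of ${\rm M}^n$ using local invertibility of $G$. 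Alternatively, quote Kulkarni--Pinkall as the paper does; that route needs only completeness of the horospherical metric and bypasses the issue entirely.
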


One of the most important issues in hypersurface theory is about when an immersed hypersurface is embedded. In contrast to the Hadamard type theorem established in 
\cite{do-warner} (cf. \cite{Had, St}), it is pointed out in \cite{EGM} that even a horospherical ovaloid 
does not have to be embedded. But we observe the following:

\begin{proposition}\label{ovaloid} Suppose that $\phi: {\rm M}^n\to\h^{n+1}, n\geq 2,$ is a compact, immersed, horospherically convex surface. Then $\phi$ can be unfolded into an embedded hypersurface along its normal flow eventually.
\end{proposition}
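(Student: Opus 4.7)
The plan is to flow the hypersurface outward along its normal until it lies close enough to its boundary image $G({\rm M}^n) \subset \s^n$ at infinity that embeddedness follows from the injectivity of the Gauss map $G$ via a Lebesgue-number argument. First I would invoke the fact already noted in the introduction that for a compact horospherically convex hypersurface the Gauss map $G:{\rm M}^n\to\s^n$ is injective. Horospherical convexity also makes the shape operator non-degenerate, so $G$ is a local diffeomorphism; together with injectivity and compactness this promotes $G:{\rm M}^n\to G({\rm M}^n)\subset\s^n$ to a diffeomorphism onto an embedded submanifold.

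Next I would analyze the normal flow $\phi_t(x)=\exp^{\h^{n+1}}_{\phi(x)}(tN(x))$, where $N$ is the outward unit normal, and record three properties. (i) The Gauss map is preserved along the flow, since each $\phi_t(x)$ lies on the geodesic ray from $\phi(x)$ in direction $N(x)$ landing at $G(x)\in\s^n$. (ii) The horospherical support function transforms by $\tilde\rho_t=\tilde\rho-t$, so in horospherical coordinates the flow is the shift $(G(x),\tilde\rho(x))\mapsto(G(x),\tilde\rho(x)-t)$. (iii) Horospherical convexity is preserved, with principal curvatures $\kappa_i(t)\searrow 1$ as $t\to\infty$, so $\phi_t$ remains an immersion for every $t\geq 0$. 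In the Poincar\'e ball model of $\h^{n+1}$, property (ii) then yields $\phi_t(x)\to G(x)$ in the Euclidean metric as $t\to\infty$, uniformly on the compact manifold ${\rm M}^n$.

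To conclude, I would cover ${\rm M}^n$ by finitely many open sets $\{U_\alpha\}$ on each of which $G$ is a diffeomorphism onto its image and let $\eps>0$ be a Lebesgue number of the cover. For $x_1\neq x_2$ with $d_{{\rm M}^n}(x_1,x_2)<\eps$, both points lie in a common $U_\alpha$, so $G(x_1)\neq G(x_2)$ and the horospherical coordinates $(G(x_i),\tilde\rho(x_i)-t)$ differ, giving $\phi_t(x_1)\neq\phi_t(x_2)$ for every $t\geq 0$. For pairs with $d_{{\rm M}^n}(x_1,x_2)\geq\eps$, compactness and injectivity of $G$ yield a uniform lower bound $|G(x_1)-G(x_2)|\geq\delta>0$, and by uniform Euclidean convergence $\phi_t\to G$ there exists $T$ such that $|\phi_t(x_1)-\phi_t(x_2)|>0$ for all such pairs once $t\geq T$. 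Combining both cases, $\phi_t$ is injective for $t\geq T$, and being an immersion of compact ${\rm M}^n$ it is an embedding. The main obstacle is justifying property (iii): preservation of horospherical convexity along the outward normal flow together with the resulting immersion property for all $t\geq 0$. This requires tracking the evolution of the shape operator in hyperbolic geometry, a standard but careful computation; once in place, the remainder is routine topology.
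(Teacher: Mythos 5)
Your reduction to two regimes (nearby pairs, far-apart pairs) is reasonable, and the far-apart half is fine: $C^0$ convergence of $\phi_t$ to the embedding $G$ in the ball model, plus a compactness lower bound on $|G(x_1)-G(x_2)|$, does kill all self-intersections between points at distance $\geq\eps$. The genuine gap is in the nearby-pairs step. You argue that if $G(x_1)\neq G(x_2)$ then the ``horospherical coordinates'' $(G(x_i),\tilde\rho(x_i)\pm t)$ differ and hence $\phi_t(x_1)\neq\phi_t(x_2)$. That implication is false: the pair $(G(x),\tilde\rho(x))$ records the tangent horosphere at $\phi_t(x)$, not the point itself, and distinct horospheres generically intersect. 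Equivalently, by the representation formula \eqref{repfor} the point $\phi_t(x)$ depends on the full $1$-jet of the support function, and the map $x\mapsto(G(x),\tilde\rho(x))$ being injective does not make $\phi_t$ injective --- this is exactly why the proposition is not a triviality: the non-embedded horospherical ovaloid of \cite{EGM} has injective Gauss map, and at each of its self-intersection points the two preimages have distinct normals, hence distinct Gauss images and distinct horospherical coordinates, yet equal image points. What your argument actually needs is a local injectivity radius for $\phi_t$ that is uniform in $t$; since the surfaces flatten onto $\s^n$ in the Euclidean metric as $t\to\infty$, this does not come for free, and $C^0$ convergence to an embedding is never enough to conclude injectivity (one needs $C^1$ convergence plus the $C^1$-openness of embeddings of compact manifolds --- that would be one legitimate repair, but it is not what you wrote). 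You also misidentify the hard point: your item (iii) is the easy, explicit part (the Riccati formula \eqref{kappa-t} gives $\kappa_i^t=(\kappa_i-\tanh t)/(1-\kappa_i\tanh t)$, which stays $<1$ and keeps $I_t$ nondegenerate), while the uniform local injectivity is the real obstacle. Minor sign slips: with the paper's conventions the support function satisfies $\tilde\rho_t=\tilde\rho+t$ and $\kappa_i^t\to-1$, not $\tilde\rho-t$ and $\searrow 1$.

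The paper's own route avoids this issue entirely and is worth comparing. Compactness gives uniform horospherical convexity $\kappa_i\leq\kappa_0<1$, so by \eqref{kappa-t} all principal curvatures of $\phi_t$ converge uniformly to $-1$; in particular for $t$ large $\phi_t$ is a compact immersed hypersurface all of whose principal curvatures have the same sign (strictly convex in the classical sense), and the Hadamard-type theorem of Do Carmo--Warner (Theorem \ref{do-warner}) then says such a hypersurface is automatically an embedded ovaloid. No injectivity of the Gauss map and no convergence estimate at infinity is needed. If you prefer your approach, replace the nearby-pairs step by a $C^1$ estimate showing $\tau\circ\phi_t\to G$ in $C^1({\rm M}^n,\R^{n+1})$ and cite the stability of embeddings of compact manifolds under $C^1$ perturbation.
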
 

Our approach here is to use the connection between normal flows, geodesic defining functions, and conformal metrics at the infinity for the hyperbolic metric $g_{\th^{n+1}}$ observed in \cite{BEQ}. Based on the Hadamard type theorem established in \cite{do-warner} (cf. \cite{Cu, EGR}) we are able to obtain the following:

\begin{theorem}\label{embed} Suppose that $\phi: {\rm M}^n\to\h^{n+1}$ is a properly immersed, complete, uniformly horospherically convex hypersurface with injective Gauss map. In addition, we assume that the boundary at infinity $\partial_\infty\phi({\rm M}^n)$ is a disjoint union of smooth compact submanifolds with no boundary in $\s^n$. Then $\phi$ can be unfolded into an embedded hypersurface along its normal flow eventually.\\

Equivalently, suppose that $e^{2\rho}g_{\ts^n}$ is a complete conformal metric on a domain $\Omega$ in $\s^n$ with bounded curvature. In addition, we assume that the boundary 
$\partial\Omega$ is a disjoint union of smooth compact submanifolds with no boundary in $\s^n$. Then the hypersurface
$$
\phi_t = \frac{e^{\rho+t}}{2}\left( 1+ e^{-2\rho-2t} \left( 1+ |\nabla\rho|^2 \right)\right) (1,x) 
+ e^{-\rho-t} (0, -x +\nabla\rho):\Omega\To\h^{n+1}
$$
is embedded when $t$ is large enough.
\end{theorem}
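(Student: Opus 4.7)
The plan is to use the second (equivalent) formulation: given $e^{2\rho}g_{\ts^n}$ a complete conformal metric with bounded curvature on $\Omega\subset\s^n$ whose boundary consists of smooth compact closed submanifolds, exploit the interpretation in \cite{BEQ} of the leaves $\phi_t$ as level surfaces of a geodesic defining function in the ball model $\b^{n+1}$, and show that as $t\to\infty$ the map $\phi_t$ degenerates uniformly onto the inclusion $\bar\Omega\hookrightarrow \overline{\b^{n+1}}$ in the Euclidean closure of the closed ball. Since the limit is an embedding of a compact manifold with boundary, standard stability of embeddings under $C^1$ perturbations will then force $\phi_t$ to be an embedding for every sufficiently large $t$.

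I would proceed in three steps. First, uniform horospherical convexity together with \eqref{curvature-relation} gives $(1-\kappa_i)^{-1}$ uniformly bounded, so the normal flow never hits a focal point and $\phi_t$ is an immersion for every $t\geq 0$; equivalently $e^{2\rho}g_{\ts^n}$ has bounded Schouten tensor, is complete, and the associated geodesic defining function is smooth. Second, reading off the explicit formula in the hyperboloid model,
\[
\phi_t(x) = \tfrac{1}{2}e^{\rho+t}(1,x) + O(e^{-\rho-t}),
\]
whose projective limit in $\overline{\b^{n+1}}$ is $x\in\s^n$; although $\rho\to\infty$ as $x\to\partial\Omega$ by completeness of the horospherical metric, the leading term still dominates, so $\phi_t$ extends continuously to $\bar\phi_t:\bar\Omega\to\overline{\b^{n+1}}$ with boundary values equal to the inclusion $\partial\Omega\hookrightarrow \s^n$. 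Third, using Fermi coordinates based on each smooth closed component of $\partial\Omega$ and the uniform bounds from bounded curvature of $e^{2\rho}g_{\ts^n}$, I would upgrade this to uniform $C^1$ convergence $\bar\phi_t\to \mathrm{id}_{\bar\Omega}$ on the compact manifold $\bar\Omega$. Openness of embeddings in the $C^1$ topology then gives that $\bar\phi_t$ is an embedding for $t$ large, and restricting to $\Omega$ yields embeddedness of $\phi_t:\Omega\to\h^{n+1}$.

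The main obstacle is the third step: the $C^1$ asymptotics of $\rho$ near $\partial\Omega$ are delicate because $\rho\to\infty$ there and the explicit formula involves exponential cancellations, so pointwise convergence alone is not enough. One must combine completeness and bounded curvature of $e^{2\rho}g_{\ts^n}$ with the regularity of $\partial\Omega$ to extract uniform bounds on $|\nabla\rho|$ in Fermi coordinates relative to each boundary component. An alternative route, in the spirit of Proposition \ref{ovaloid}, is to apply the do Carmo--Warner Hadamard-type theorem \cite{do-warner} to the piece of $\phi_t$ sitting near each component of $\partial\Omega$, obtaining local embeddedness there, and to glue with the injectivity of $\phi_t$ on compact subsets of $\Omega$, which is automatic from the $C^1$ convergence away from $\partial\Omega$.
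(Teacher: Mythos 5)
Your strategy has a genuine gap exactly at the point you flag as ``the main obstacle,'' and that point is where all the content of the theorem lives. The openness-of-embeddings argument requires a compact domain on which the limit map is a $C^1$ embedding and on which the convergence is $C^1$; here $\bar\Omega$ is not a manifold (the components of $\partial\Omega$ may have any codimension), $\phi_t$ is only defined on $\Omega$, and the continuous extension $\bar\phi_t$ is not differentiable across $\partial\Omega$. Uniform interior $C^1$ convergence to the identity rules out self-intersections $\phi_t(p)=\phi_t(q)$ only when $p$ and $q$ can be joined by a short path inside $\Omega$; it cannot exclude the dangerous scenario in which $p_j\neq q_j$ both tend to the same point of a boundary component ${\rm E}^k$ from ``opposite sides,'' i.e.\ the end wrapping around its boundary at infinity. (Compare the covering example in Section \ref{Sect:injectivity}: there the surface does wrap around an $\s^{n-2}$ at infinity, and only the injectivity hypothesis on the Gauss map excludes it --- your $C^1$ argument alone would not.) To close this you would need both the delicate uniform gradient/Hessian bounds on $\rho$ near $\partial\Omega$ that you defer, and a quantitative quasi-convexity of $\Omega$ near each smooth compact boundary component; neither is supplied.

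Your fallback --- apply the do Carmo--Warner theorem to ``the piece of $\phi_t$ sitting near each component of $\partial\Omega$'' --- does not typecheck as stated, because that piece is noncompact and Theorem \ref{do-warner} is a statement about compact immersed hypersurfaces with nonnegative principal curvatures. This is, however, the right idea in spirit, and it is what the paper actually does in Theorem \ref{Th:Correspondence}: after reducing to a neighborhood of each end via Lemma \ref{cpt-part}, one intersects $\phi_t(\Omega)$ with the family of totally geodesic subspaces $D^{n-k}_\lambda(p)$ whose spheres at infinity are \emph{linked} with the component ${\rm E}^k$; linking forces each slice $I^t_{p,\lambda}$ to be a nonempty \emph{compact} immersed convex hypersurface of $D^{n-k}_\lambda(p)$, to which do Carmo--Warner applies, and a continuity argument in $\lambda$ combined with the strict convexity $\kappa^t_i\to -1$ of Lemma \ref{strong-convex} rules out extra tangency points as $\lambda$ decreases. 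That slicing-and-sweeping construction is the actual proof and is absent from your proposal.
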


It is interesting in the surface side to note that one also gets to know the end structure in the proof of the above theorem (cf. Remark \ref{end-structure}). The above embedding theorem is particularly useful when combining with injectivity theorems in this paper and therefore gives us opportunities to apply the Alexandrov refection principle
in dealing with immersed hypersurfaces in hyperbolic space. Based on a slight extension of the Alexandrov-Bernstein theorem in \cite{dCL} we obtain
the following:

\begin{theorem}\label{bernstein} Suppose that $\phi: {\rm M}^n\to \h^{n+1}$ is an immersed, complete, horospherically convex hypersurface with constant mean curvature 
$H = \sum_{i=1}^n\kappa_i$ and
\begin{equation}\label{bernstein-extra}
\sum_{i=1}^n \frac 2{1 - \kappa_i} \leq n.
\end{equation}
Then it is a horosphere if its boundary at infinity is a single point in $\s^n$.
\end{theorem}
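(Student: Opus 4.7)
The plan is to combine the Gauss-map injectivity result with an Alexandrov moving-planes argument, carried out on the horospherical support function after stereographic projection away from the single asymptotic point.

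First, observe that the pinching condition (\ref{bernstein-extra}) is identical to (\ref{R>=0}), so Theorem \ref{quick} yields injectivity of the Gauss map $G:{\rm M}^n\to\s^n$. Writing $\{p\}=\partial_\infty\phi({\rm M}^n)$, the horospherical support function therefore descends to a well-defined function $\rho$ on an open $\Omega\subseteq\s^n\setminus\{p\}$, with $e^{2\rho}g_{\ts^n}$ a complete conformal metric on $\Omega$. Since a single point is trivially a smooth compact submanifold without boundary in $\s^n$, Theorem \ref{embed} applies (after a brief check that the CMC equation together with (\ref{bernstein-extra}) delivers uniform horospherical convexity), and furnishes an embedded model $\phi_t$ of the hypersurface along the normal flow for all sufficiently large $t$, with asymptotic boundary still $\{p\}$.

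Next, transcribe the constant-mean-curvature equation $\sum_i\kappa_i=H$ through (\ref{curvature-relation}) into a quasilinear elliptic PDE on $\rho$ whose structure is conformally invariant. Stereographically project $\s^n\setminus\{p\}$ onto $\mathbb{R}^n$ so that the equation becomes a fully Euclidean-invariant elliptic PDE. With the embedded model $\phi_t$ in hand, one can now run the Alexandrov moving-planes method in the spirit of the do Carmo--Lawson theorem: for every unit direction $\nu\in\mathbb{R}^n$, slide a hyperplane $\Pi_\nu(s)$ perpendicular to $\nu$ in from infinity, reflect $\rho$ through $\Pi_\nu(s)$, and compare the reflected function with $\rho$ on one side using the strong maximum principle on the elliptic PDE. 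The asymptotic behaviour of $\rho$ near $p$ (equivalently, at infinity in $\mathbb{R}^n$) imposed by the one-point boundary at infinity is what lets the moving plane reach an honest plane of symmetry. Repeating the procedure over all directions $\nu$ yields invariance of $\rho$ under every Euclidean reflection, hence under the full Euclidean group.

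Such total Euclidean invariance forces $\rho$ to be constant on $\mathbb{R}^n$, so $e^{2\rho}g_{\ts^n}$ is (a rescaling of) the pullback of the Euclidean metric; untwisting the stereographic projection and the horospherical correspondence identifies $\phi$ with a horosphere centred at $p$. The principal technical obstacle is verifying that the moving plane actually closes up at the puncture rather than stalling prematurely: this requires sufficiently sharp asymptotic control on $\rho$ as one approaches $p$, and this is precisely what the combination of uniform horospherical convexity, the ellipticity of the CMC equation, and the properness built into Theorem \ref{embed} is designed to provide.
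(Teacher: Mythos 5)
Your reduction steps coincide with the paper's: \eqref{bernstein-extra} is literally \eqref{R>=0}, so Theorem \ref{quick} gives injectivity of the Gauss map, and since each summand in \eqref{bernstein-extra} is positive one gets $1-\kappa_i\geq 2/n$, hence uniform horospherical convexity and admissibility; a point is a smooth compact submanifold without boundary, so Theorem \ref{embed} unfolds $\phi$ into an embedded leaf $\phi_t$. Up to there you are on the paper's track, which then concludes via Corollary \ref{Cor:Sn-p}, i.e.\ the generalized Bernstein theorem applied to $\phi_t$.

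The final step of your argument has a genuine gap. The paper runs the Alexandrov reflection \emph{extrinsically} on the embedded leaf $\phi_t$ in the half-space model, reflecting through totally geodesic hyperplanes orthogonal to a vertical geodesic (Theorem \ref{Th:GeneralizedBernstein}, following do Carmo--Lawson); the reflection gets started precisely because $\phi_t$ is properly embedded, so a lowest tangent hyperplane exists. You instead reflect the conformal factor through Euclidean hyperplanes of $\R^n$. Since the solution lives on all of $\R^n$ and the metric is complete at infinity, a hyperplane ``slid in from infinity'' never initially clears the solution; starting the comparison is exactly the hard step of Caffarelli--Gidas--Spruck and Li--Li and requires sharp asymptotic expansions at the puncture. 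Properness only gives $\tau\circ\phi_t(x)\to p$ with no rate, so Theorem \ref{embed} does not supply this, and in fact the embeddedness of $\phi_t$ plays no role in a moving-plane argument run on $\rho$ over $\R^n$ --- you import the embedding theorem but never use it where it matters. If you instead meant to reflect the hypersurface $\phi_t$ itself, note that $\phi_t$ is no longer CMC: the curvature equation changes along the normal flow (this is emphasized in the introduction and in the proof of Theorem \ref{Th:GeneralizedAlexandrov}), so one must pass to the transformed Weingarten function and check ellipticity, which is why the paper works with the general class \eqref{equ:weingarten}. Also, ``invariance under every Euclidean reflection'' overstates the output of moving planes (per direction one gets either a single symmetry hyperplane or monotonicity), and the equation for $\rho$ is fully nonlinear in the Schouten eigenvalues, not quasilinear. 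A clean repair of your PDE-side route: \eqref{bernstein-extra} says exactly $\sum_i\lambda_i\geq 0$, i.e.\ the horospherical metric has nonnegative scalar curvature, so the generalized Liouville theorem of Li--Li quoted in Section 4.3 applies verbatim to $e^{2\rho}g_{\ts^n}$ on $\s^n\setminus\{p\}$ and yields the Euclidean metric, hence a horosphere --- bypassing the embedding step entirely.
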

 
This is a strong Bernstein theorem for immersed hypersurfaces in hyperbolic space. The condition \eqref{bernstein-extra} is used to apply Theorem \ref{quick} and implies that
$H\geq n$. Similarly, we establish a general Alexandrov reflection principle for immersed, complete, horospherically convex hypersurfaces satisfying general elliptic Weingarten
equations \eqref{equ:weingarten}. \\

Elliptic Weingarten equations (4.8) for hypersurfaces and fully nonlinear elliptic Yamabe type equations (4.4) for conformal metrics have been extensively studied. Both subjects have a long history and both are very important subjects in the fields of differential geometry and partial differential equations. Although they are mostly treated separately, there is a clear indication that these two subjects should be intimately related in terms of the types of problems and the tools that have been used. In this paper we extend the 
correspondence shown in \cite{EGM} and establish the following correspondence between uniformly horospherically convex hypersurfaces and complete conformal metrics with bounded curvature. 

\begin{theorem} Suppose that $\phi: {\rm M}^n\to\h^{n+1}$ is an immersed, complete, uniformly horospherically convex hypersurface with injective Gauss map $G: {\rm M}^n\to\s^n$. Then it induces a complete conformal metric $e^{2\rho}g_{\ts^n}$ on $G({\rm M}^n)\subset\s^n$ with bounded curvature, where $\rho$ is the horospherical support function and
$$
\partial_\infty\phi({\rm M}^n) = \partial G({\rm M}^n).
$$  
On the other hand, suppose that $e^{2\rho}g_{\ts^n}$ is a complete metric on a domain $\Omega$ in $\s^n$ with bounded curvature. Then it induces properly immersed, complete, uniformly horospherically convex hypersurfaces
$$
\phi_t = \frac{e^{\rho+t}}{2}\left( 1+ e^{-2\rho-2t} \left( 1+ |\nabla\rho|^2 \right)\right) (1,x) 
+ e^{-\rho-t} (0, -x +\nabla\rho):\Omega\To\h^{n+1}
$$
and
$$
\partial_\infty\phi_t(\Omega) = \partial\Omega
$$
for $t$ large enough.
\end{theorem}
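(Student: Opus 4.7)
The plan is to establish each direction using the curvature relation \eqref{curvature-relation} as the bridge between hypersurface and conformal geometry, combined with the injectivity, properness, and asymptotic results established earlier in the paper.

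For the forward direction, uniform horospherical convexity pins each principal curvature $\kappa_i$ uniformly away from the horosphere value $1$, which in turn makes the Gauss map a local diffeomorphism; combined with the injectivity assumption, $G$ is therefore a diffeomorphism onto $\Omega := G(M^n)$. We push $\tilde\rho$ down via $G^{-1}$ to a function $\rho = \tilde\rho\circ G^{-1}$ on $\Omega$, so that the horospherical metric $g_h = e^{2\tilde\rho}G^*g_{\ts^n}$ on $M^n$ corresponds exactly to $e^{2\rho}g_{\ts^n}$ on $\Omega$. The relation \eqref{curvature-relation} pins the Schouten eigenvalues $\lambda_i$ of $e^{2\rho}g_{\ts^n}$ into a compact range under the pinching of $\kappa_i$; since $e^{2\rho}g_{\ts^n}$ is locally conformally flat in dimensions $n\geq 3$, the full Riemann tensor is determined by the Schouten tensor, so bounded $\lambda_i$ gives bounded sectional curvature. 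Completeness of $(\Omega,e^{2\rho}g_{\ts^n})$ is then deduced from completeness of $\phi^*g_{\h^{n+1}}$ via a quantitative two-sided comparison between $g_h$ and the induced hyperbolic metric on $M^n$ whose constants depend only on the pinching of $\kappa_i$. The boundary identification splits into two inclusions: $\partial G(M^n)\subseteq\partial_\infty\phi(M^n)$ is the lemma preceding Theorem 1.3, while for the reverse inclusion we use the Lorentzian-model expression of $\phi$, whose leading term is $\tfrac{e^{\tilde\rho}}{2}(1,G)$, to conclude that $\phi$ can tend to infinity only when $\tilde\rho\to\infty$, which by the completeness just established forces the Gauss image to accumulate on $\partial\Omega$.

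For the reverse direction we start from $e^{2\rho}g_{\ts^n}$ complete on $\Omega$ with bounded Schouten eigenvalues. A direct computation in the Lorentzian model confirms that $\phi_t$ lands on $\{\langle y,y\rangle = -1\}$ and defines an immersion whose horospherical support function is $\rho+t$ and whose Gauss map is the inclusion $\Omega\hookrightarrow\s^n$. The shift $\rho\mapsto\rho+t$ is a constant conformal rescaling of the horospherical metric, so the Schouten eigenvalues become $e^{-2t}\lambda_i$; the relation \eqref{curvature-relation} then determines the principal curvatures $\kappa_i^{(t)}$ of $\phi_t$, and a short computation shows they are uniformly pinched away from the horosphere value for $t$ sufficiently large, giving uniform horospherical convexity. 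Properness follows from the explicit formula together with completeness of $e^{2\rho}g_{\ts^n}$: any sequence in $\Omega$ approaching $\partial\Omega$ has $\rho\to\infty$, and the Lorentzian norm of $\phi_t$ blows up accordingly, so $\phi_t$ leaves every compact set of $\h^{n+1}$. The leading-order behaviour $\phi_t\sim\tfrac{e^{\rho+t}}{2}(1,x)$ then yields $\partial_\infty\phi_t(\Omega) = \partial\Omega$.

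The principal obstacle is the completeness step in the forward direction. One has to turn the pointwise pinching of $\kappa_i$ into a uniform two-sided comparison between the horospherical metric $g_h$ and the induced hyperbolic metric $\phi^*g_{\h^{n+1}}$ on $M^n$, so that geodesic completeness of the latter transfers across the conformal change to $g_h$, and thence via the diffeomorphism $G$ to $e^{2\rho}g_{\ts^n}$ on $\Omega$; the constants in this comparison must depend only on the uniform horospherical convexity bound, not on the point.
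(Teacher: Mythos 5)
Your overall architecture matches the paper's (the curvature relation as the bridge, completeness of $g_h$ from the metric comparison in \eqref{metpsi}, the explicit formula \eqref{phi-t} for properness and the boundary identification), but there is one genuine gap that both halves of your boundary/properness argument lean on. You assert that completeness of $e^{2\rho}g_{\ts^n}$ alone forces $\rho\to+\infty$ as $x\to\partial\Omega$, and dually that ``$\phi$ can tend to infinity only when $\tilde\rho\to\infty$.'' Neither is true. Completeness only says $\int_\gamma e^\rho$ diverges along every divergent path; $\rho$ can dip back to finite values infinitely often on the way to $\partial\Omega$ (such examples have unbounded Hessian of $\rho$, hence unbounded Schouten tensor --- which is exactly why the curvature bound enters). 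The paper flags this as the subtle point and settles it by quoting Proposition~8.1 of \cite{CHY}: for a \emph{complete} conformal metric with scalar curvature \emph{bounded below}, $\rho\to+\infty$ at $\partial\Omega$, hence $\beta=e^{2\rho}+|\nabla\rho|^2\to\infty$, and then the stereographic computation in Lemma \ref{proper-complete} gives $\tau\circ\phi_t(x)\to x_0$ as $x\to x_0\in\partial\Omega$. Without this input you get neither properness of $\phi_t$, nor $\partial\Omega\subseteq\partial_\infty\phi_t(\Omega)$, nor (in the forward direction) $\partial G({\rm M}^n)\subseteq\partial_\infty\phi({\rm M}^n)$. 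On that last point, your citation of the lemma preceding Theorem 1.3 is not available: that lemma assumes the Gauss map is regular at infinity, which is not a hypothesis here and is itself a consequence of the \cite{CHY} fact rather than something free. Also, the converse implication you use fails from the formula alone: $\phi$ escapes to infinity whenever $\beta\to\infty$, which can happen with $\rho$ bounded and $|\nabla\rho|\to\infty$, or with $\rho\to-\infty$.

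Two smaller inaccuracies in the completeness step, which you correctly single out as the crux of the forward direction. First, uniform horospherical convexity gives only the one-sided bound $\kappa_i\le\kappa_0<1$, so from \eqref{metpsi} you get only $g_h\ge(1-\kappa_0)^2\,\phi^*g_{\th^{n+1}}$, not a two-sided comparison ($\kappa_i$ may be unbounded below, making $g_h$ arbitrarily large relative to the induced metric). Fortunately the lower bound is the only direction needed to transfer completeness. Second, $g_h$ is not a conformal change of the induced metric $\phi^*g_{\th^{n+1}}$ --- it differs by the square of the operator ${\rm Id}-S$ where $S$ is the shape operator --- so the transfer is a plain metric comparison, not a conformal one; the conformal relation is between $g_h$ and $G^*g_{\ts^n}$.
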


The correspondence established in the above theorem identifies the problem of finding a properly immersed and complete hypersurface $\phi: {\rm M}^n\to\h^{n+1}$ that satisfies certain geometric equation \eqref{equ:weingarten} with a prescribed boundary at infinity $\partial_\infty\phi({\rm M}^n)$ in $\s^n$ \cite{S, GS} with the problem of finding a complete conformal metric $e^{2\rho}g_{\ts^n}$ that satisfies the corresponding geometric equation \eqref{equ:elliptic} according to \eqref{curvature-relation} on the domain $\Omega\subset\s^n$ whose boundary $\partial\Omega$ is the same as the prescribed boundary at infinity $\partial_\infty\phi({\rm M}^n)$ \cite{CHY, MP}.   For instance, the method of Alexandrov reflection for embedded hypersurfaces in hyperbolic space $\h^{n+1}$ in \cite{A, Ho, dCL} and the method of moving planes (or spheres) in \cite{GNN, CGS} are seen to be the same under the correspondence.  As a consequence of our general Alexandrov reflection principle for horospherically convex hypersurfaces satisfying elliptic Weingarten equations \eqref{equ:weingarten}, we also establish 
a general Alexandrov reflection principle for conformal metrics satisfying fully nonlinear elliptic equations \eqref{equ:elliptic}.  From this general Alexandrov reflection principle, we derive, 
for example,  the following Delaunay type theorem:

\begin{theorem}\label{Liouville-Delaunay}  Suppose that $g$ is a complete conformal metric with bounded Schouten tensor on the domain $\Omega = \s^n\setminus\{p, q\}$. And suppose that $g$ satisfies \eqref{equ:elliptic}. Then $g$ is cylindric.  
\end{theorem}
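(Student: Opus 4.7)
The plan is to pass from the conformal-metric side to the hypersurface side via the correspondence established earlier, apply the general Alexandrov reflection principle for horospherically convex hypersurfaces, and then translate the resulting symmetry back to $g$. First I would invoke the correspondence theorem: since $g = e^{2\rho}g_{\ts^n}$ is complete on $\Omega = \s^n\setminus\{p,q\}$ with bounded Schouten tensor (hence bounded curvature), for all sufficiently large $t$ it is induced by a properly immersed, complete, uniformly horospherically convex hypersurface $\phi_t:\Omega\to\h^{n+1}$ with $\partial_\infty\phi_t(\Omega) = \partial\Omega = \{p,q\}$, and the fully nonlinear equation \eqref{equ:elliptic} for $g$ translates via \eqref{curvature-relation} into an elliptic Weingarten equation \eqref{equ:weingarten} for the principal curvatures of $\phi_t$. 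Because $\{p,q\}$ is a disjoint union of two smooth compact $0$-dimensional submanifolds without boundary, Theorem \ref{embed} guarantees that $\phi_t$ is embedded for $t$ large enough, so Alexandrov-type comparison arguments are available.

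Let $\gamma\subset\h^{n+1}$ be the complete geodesic asymptotic to $p$ and $q$. For any totally geodesic hyperplane $P\subset\h^{n+1}$ that contains $\gamma$, the geodesic reflection $R_P$ is an isometry of $\h^{n+1}$ fixing $\{p,q\}\subset\s^n$ pointwise, hence preserving $\partial_\infty\phi_t(\Omega)$. I would apply the general Alexandrov reflection principle that the paper establishes for horospherically convex hypersurfaces satisfying \eqref{equ:weingarten}: start with a foliation of $\h^{n+1}$ by totally geodesic hyperplanes orthogonal to a direction transverse to $P$ and swing them in; the limit hyperplane must be $P$ itself because any earlier stopping position would produce an interior or boundary-at-infinity contact forcing symmetry at that stage, and the only configuration consistent with the asymptotic boundary being exactly $\{p,q\}\subset P$ is equality. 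Thus $\phi_t$ is invariant under $R_P$ for every such $P$.

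The family of hyperplanes containing $\gamma$ is acted on transitively by the rotations of $\h^{n+1}$ fixing $\gamma$ pointwise, and the reflections $R_P$ generate the full stabilizer of $\gamma$, isomorphic to $O(n)$. Hence $\phi_t(\Omega)$ is a hypersurface of revolution about $\gamma$. Transferring back through the Gauss map, $\rho$ depends only on the $\s^n$-distance from $p$, i.e., $g$ is rotationally symmetric about the $pq$-axis. This is precisely the statement that $g$ is cylindric, since in the conformal identification $\s^n\setminus\{p,q\}\cong\R\times\s^{n-1}$ it means $g$ is invariant under the $O(n-1)$ action on the cross-section $\s^{n-1}$.

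The main obstacle is executing the Alexandrov moving-hyperplane step rigorously in the non-compact hyperbolic setting with asymptotic boundary reduced to two points. Concretely, one must (i) locate an initial hyperplane from which the swinging family first meets $\phi_t$ entirely on one side with the reflected image sitting inside the convex component bounded by $\phi_t$, and (ii) run the strong maximum principle for \eqref{equ:weingarten} at interior tangencies, together with a boundary-point comparison controlling the approach to the two asymptotic ends $p$ and $q$. Uniform horospherical convexity, regularity of the Gauss map at infinity, and the smallness (dimension zero) of $\partial_\infty\phi_t(\Omega)$ are exactly the ingredients that permit the initial configuration to be set up and the comparison argument to propagate up to the limiting hyperplane $P$; once these are in place the rotational symmetry, and hence the cylindric conclusion, follows.
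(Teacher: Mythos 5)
Your proposal is correct and follows essentially the same route as the paper: pass to the hypersurface side via Theorem \ref{uniform-bounded}, use Theorem \ref{Th:Correspondence} to embed $\phi_t$ since $\{p,q\}$ is a union of compact $0$-dimensional submanifolds, apply the reflection symmetry of Theorem \ref{Th:symmetry}/Corollary \ref{Cor:symmetry} to every totally geodesic hyperplane containing the geodesic joining $p$ and $q$, and conclude rotational symmetry (Corollary \ref{Cor:Delaunay}), hence that $g$ is cylindric. The one point you pass over lightly is that unfolding along the normal flow replaces $({\cal W},\Gamma^*)$ by the deformed pair $({\cal W}^t,\Gamma^*_t)$, so one must check (as the paper does in the proof of Theorem \ref{Th:GeneralizedAlexandrov}) that \eqref{w4}, \eqref{w5}, \eqref{w6} are preserved before invoking the reflection principle; also the group acting transitively on the cross-sectional $\s^{n-1}$ is $O(n)$, not $O(n-1)$, but neither issue affects the validity of the argument.
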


We would like to remark that this Delaunay type theorem  should be compared with those in \cite{Li, LiLi1, LiLi2}. Their theorems assume the scalar curvature is nonnegative, 
while ours assumes the Schouten tensor is bounded.  \\

To end the introduction we would like to remark that it is not just desirable but imperative for us to consider general fully nonlinear elliptic problems \eqref{equ:elliptic} and \eqref{equ:weingarten} other than, for example,  just the mean curvature equation for hypersurfaces. 
Because, in order to gain the embeddedness and apply the Alexandrov reflection principle, we need to unfold a given hypersurface along the normal flow, in which the 
curvature equation usually does not remain the same. This is seen, for instance, in the proof of Theorem \ref{Th:GeneralizedAlexandrov} in Section 4. \\

The paper is organized as follows: In Section \ref{Sect: local} we recapture the works in \cite{EGM} and \cite{BEQ} and clarify the relation of geodesic defining functions and normal flows. In Section \ref{Sect: global} we develop the global correspondence between admissible hypersurfaces  $\h^{n+1}$ and realizable metrics on domains in $\s^n$. We also prove that an admissible hypersurface can be unfolded into an embedded one along the normal flow when the boundary at infinity is a disjoint union of smooth compact submanifolds with no boundary in $\s^n$. In Section \ref{Sect:Ellipticity} we establish the full correspondence between elliptic problems from the two sides. In particular, we compare Alexandrov theorems with 
Obata theorems, Bernstein theorems with Liouville theorems and even Delaunay type theorems. In fact we extend a general symmetry result in \cite{LR}  for both admissible hypersurfaces and realizable metrics based on our embedding theorem.

\vskip 0.2in\noindent{\bf Acknowledgment} \quad The authors would like to express their gratitude to the Beijing International Center for Mathematical Research. Part of the research of this paper was carried out during the time when the authors were visiting the center. The authors are also very appreciative for many enlightening discussions with Professors 
Sun-Yung Alice Chang, Jos\'{e} A. G\'{a}lvez, Bo Guan, Yanyan Li, and Paul Yang.  

%---------------------------------------------------
%---------------------------------------------------

\section{Local Thoery}\label{Sect: local}

In this section we will recapture the works in \cite{EGM, BEQ} and set the stage to develop a global theory of the correspondence between hypersurfaces in hyperbolic space 
$\h^{n+1}$ and conformal metrics on domains of the conformal infinity $\s^n$ of hyperbolic space $\h^{n+1}$. In \cite{EGM}, Espinar, G\'{a}lvez and Mira discovered that (see also \cite{Eps2, Eps3}), given a piece of horospherically convex hypersurface $\phi: \rm{M}^n\to\h^{n+1}$, there is a locally conformally flat metric $g_h$ on $\rm{M}^n$, 
whose curvature is explicitly related to the extrinsic curvature of the hypersurface in $\h^{n+1}$. Conversely, one may construct an immersed, horospherically convex hypersurface in hyperbolic space 
$\h^{n+1}$ from a conformal metric on a domain in the infinity $\s^n$.  It was later observed in \cite{BEQ} that such correspondence can be seen as the association of conformal metrics 
on domains of $\s^n$,  geodesic defining functions, and level surfaces of geodesic defining functions (see also \cite{MP2}).

%-----------------------------------------------------

\subsection{Horospherical Convexity and Horospherical Metrics}\label{Sect:horom}

We will briefly introduce the construction developed in \cite{EGM}.
Let us denote by $\r^{1,n+1}$ the Minkowski spacetime, that is,
the vector space $\r ^{n+2}$ endowed with the Minkowski spacetime metric 
$\meta{}{}$ given by
$$ 
\meta{\bar{x}}{\bar{x}} = - x_0 ^2 + \sum _{i=1}^{n+1} x_i ^2 ,
$$
where $\bar{x} \equiv (x_0 , x_1 , \ldots , x_{n+1})\in \r^{n+2}$. Then hyperbolic space, the de Sitter spacetime, and the positive null cone are given, respectively, by the hyperquadrics
\begin{equation*}
\begin{split}
\h ^{n+1} &= \set{ \bar{x} \in \l ^{n+2} : \, \meta{\bar{x}}{\bar{x}} = -1, \, x_0 >0}\\
\s^{n+1}_1 &= \set{ \bar{x} \in \l ^{n+2} : \, \meta{\bar{x}}{\bar{x}} = 1}\\
\n^{n+1}_+ &= \set{ \bar{x} \in \l ^{n+2} : \, \meta{\bar{x}}{\bar{x}} = 0, \, x_0
>0}.
\end{split}
\end{equation*}
The ideal boundary at infinity of hyperbolic space $\h^{n+1}$ will be denoted by $\s ^n$.

An immersed hypersurface in hyperbolic space $\h^{n+1}$ is given by a parametrization
$$
\phi: \rm{M}^n\To\h^{n+1}.
$$
On the hypersurface $\phi$, an orientation assigns a unit normal vector field
$$
\eta: \rm{M}^n\To \s^{n+1}_1.
$$
Hence, associated to $\phi$, one may consider the map
$$
\psi=\phi+\eta: \rm{M}^n\To\n^{n+1}_+,
$$
which is called the associated light cone map of $\phi$. We will use horospheres to define the Gauss map of an oriented, immersed hypersurface in hyperbolic space $\h^{n+1}$. In the above hyperboloid model, horospheres in $\h^{n+1}$ are the intersections of affine null hyperplanes of $\r^{1, n+1}$ with $\h^{n+1}$. 

\begin{definition}[\cite{Eps1,Eps2,Br}]\label{h-gauss}
Let $\phi:\rm{M}^n\To \h ^{n+1}$ be an immersed, oriented hypersurface in $\h^{n+1}$
with the orientation $\eta: \rm{M}^n\to \s^{n+1}_1$. The \emph{Gauss map} 
$$
G: \rm{M}^n\To \s ^n
$$ 
of $\phi$ is defined as follows: for every $p\in\rm{M}^n$, $G(p)\in \s^n$ is the point at infinity of the unique horosphere $\mathcal{H}_p$ in $\h^{n+1}$ passing through $\phi(p)$ and with the inner unit normal the same as $\eta(p)$ at $\phi(p)$.
\end{definition}

The associated light cone map $\psi$ is strongly related to the Gauss map $G$ of $\phi$. Indeed, the ideal boundary $\s^n$ of $\h^{n+1}$ can be identified with the projective quotient space $\n_+^{n+1} / \r _+$ in such a way that we have 

\begin{equation}\label{psiG}
\psi = e^{\tilde \rho} (1, G),
\end{equation}
where $\tilde\rho$ is the so-called horospherical support function for the hypersurface $\phi$.  Note that horospheres are the unique
hypersurfaces such that, with inward orientation, the associated light cone map  as well as the Gauss map are constant. 
Moreover, if we write $\psi = e^{\tilde\rho}(1,x)$ for a given horosphere, then $x \in \s ^n$ is the point at infinity of the horosphere and $\tilde\rho$ is the signed hyperbolic distance
of the horosphere to the point $\mathcal{O}=(1,0,\ldots ,0) \in \h ^{n+1}\subseteq \r^{1, n+1}$. The intrinsic geometry of a horosphere is Euclidean. Therefore one may introduce a notion of convexity based on horospheres. Namely,

\begin{definition}[ \cite{Sc}]\label{horocon}
Let $\phi: \rm{M}^n \to \h^{n+1}$ be an immersed, oriented hypersurface and let
$\mathcal{H}_p$ denote the horosphere in $\h^{n+1}$ that is tangent to the hypersurface at $\phi(p)$ and whose inward unit normal at $\phi(p)$ agrees with unit normal $\eta(p)$
to the hypersurface $\phi$ at $\phi(p)$. We will say that
$\phi: \rm{M}^n\to\h^{n+1}$ is horospherically convex at $p$ if there exists a neighborhood
$V\subset \rm{M}^n$ of $p$ so that $\phi(V\setminus \{p\})$ does not intersect with $\mathcal{H}_p$. Moreover, the distance function of the hypersurface $\phi: V\to\h^{n+1}$ to the 
horosphere $\mathcal{H}_p$ does not vanish up to the second order at $\phi(p)$ in any direction.
\end{definition}

We have the following characterization of horospherically convex hypersurfaces:

\begin{lemma}[\cite{EGM}]\label{hc}
Let $\phi:\rm{M}^n\To \h^{n+1}$ be an immersed, oriented hypersurface. Then $\phi$ is horospherically convex at $p$ if and only if all principal curvatures of $\phi$ at $p$ are simultaneously $<1$ or $>1$. In particular, $dG$ is invertible at $p$ if $\phi$ is horospherically convex at $p$.
\end{lemma}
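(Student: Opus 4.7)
The plan is to prove both implications of the curvature characterization by reducing to a second-order comparison between $\phi$ and the tangent horosphere at $p$, and then to derive the invertibility of $dG$ from a direct computation of $d(\phi+\eta)$. The one essential geometric fact I will rely on is that every horosphere in $\h^{n+1}$ is totally umbilical with all principal curvatures equal to $+1$ relative to the unit normal pointing into its horoball (equivalently, the unique unit normal for which $\phi+\eta$ is constant). I would establish this once and for all either by a direct Minkowski-model computation in the hyperboloid picture, or by realizing horospheres as limits of geodesic spheres in $\h^{n+1}$.

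Next I would let $d$ denote the signed hyperbolic distance to $\mathcal{H}_p$, normalized so that $\nabla d$ at $\phi(p)$ agrees with $\eta(p)$, and set $f=d\circ\phi$. Since $\phi$ and $\mathcal{H}_p$ share a tangent plane and a unit normal at $\phi(p)$, the function $f$ vanishes to first order at $p$. The standard computation of the Hessian of a distance-to-hypersurface function, carried out in an orthonormal frame $\{e_i\}$ of principal directions of $\phi$ at $p$, gives
\[
\mathrm{Hess}_p f(e_i,e_j) \;=\; \bigl(II_\phi - II_{\mathcal{H}_p}\bigr)(e_i,e_j) \;=\; (\kappa_i - 1)\,\delta_{ij}.
\]
The definition of horospherical convexity at $p$ demands (i) that $f$ keep a fixed sign on $V\setminus\{p\}$ and (ii) that $f$ not vanish to second order in any direction, i.e.\ no eigenvalue of $\mathrm{Hess}_p f$ is zero. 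Conditions (i) and (ii) together are equivalent to $\mathrm{Hess}_p f$ being strictly definite, which is exactly the assertion that the $\kappa_i$ are simultaneously $>1$ or simultaneously $<1$. The converse follows from Taylor's theorem applied to $f$ near $p$.

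For the statement about $dG$, I would use the light cone map $\psi=\phi+\eta=e^{\tilde\rho}(1,G)$. The Weingarten relation gives $d\eta=-d\phi\circ A$, where $A$ is the shape operator with eigenvalues $\kappa_i$, so
\[
d\psi \;=\; d\phi\circ(I-A).
\]
Differentiating the right-hand expression of $\psi$ and projecting onto vectors tangent to $\s^n$ at $G(p)$ shows that $dG(X)=0$ forces $d\psi(X)$ to be parallel to $(1,G(p))$. But $(1,G(p))$ is a positive multiple of $\phi(p)+\eta(p)$, which fails to be orthogonal to $\phi(p)$ in the Minkowski inner product, while $d\psi(X)=d\phi((I-A)X)$ lies in $T_{\phi(p)}\h^{n+1}$ and is therefore orthogonal to $\phi(p)$. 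Hence $d\psi(X)$ must vanish, i.e.\ $(I-A)X=0$. Horospherical convexity rules out the eigenvalue $1$ for $A$, so $\ker dG=\{0\}$ and $dG$ is invertible by dimension count.

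The main technical obstacle will be pinning down sign conventions: one must verify carefully that with the EGM choice of inward unit normal to a horosphere the principal curvatures are exactly $+1$ (not $-1$), so that the Hessian eigenvalues really come out as $\kappa_i-1$ and not $\kappa_i+1$. Once this normalization is fixed everything else reduces to linear algebra in the principal basis together with the elementary Hessian-of-distance formula.
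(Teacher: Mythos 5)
Your proposal is correct and follows essentially the route the paper takes: the paper defers the curvature characterization to \cite{EGM} (whose argument is precisely your second-order comparison of $\phi$ with the tangent horosphere, using that horospheres with the inward orientation are totally umbilical with principal curvatures equal to $1$ in this convention), and it deduces invertibility of $dG$ from the identity \eqref{metpsi}, which is just a repackaging of your computation $d\psi=d\phi\circ(I-A)$ together with the fact that the $(1,G)$-component of $d\psi$ is null and orthogonal to $(0,dG(X))$. The only caution is your paraphrase of condition (ii) as ``no eigenvalue of $\mathrm{Hess}_pf$ is zero'': the definition actually says $\mathrm{Hess}_pf(v,v)\neq0$ for every direction $v$, which is the statement you need (and which, combined with the semi-definiteness forced by (i), yields strict definiteness exactly as you conclude).
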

To see the second statement, if $\{e_1,\cdots, e_n\}$ denotes an orthonormal basis of principal curvature directions of $\phi$ at $p$ and $\kappa _1, \cdots, \kappa _n$ are the principal
curvatures respectively, i.e.
\begin{equation}\label{kappa-i}
\aligned d\phi(e_i) & = e_i  \\ d\eta(e_i) & = - \kappa_i e_i,
\endaligned
\end{equation}
it is then immediate \cite{EGM} that
\begin{equation}\label{metpsi}
\meta{(d\psi )_p(e_i)}{(d\psi )_p(e_j)}= (1 - \kappa _i)^2 \delta_{ij} = e^{2\tilde\rho}
\meta{(dG)_p(e_i)}{(dG )_p(e_j)}_{\ts ^n}.
\end{equation}
From now on, unless stated otherwise, we will take the orientation on a horospherically convex hypersurface so that all principal curvatures satisfy $\kappa_i  < 1$. 

\begin{remark}
We like to remark here that horospherical convexity (cf. Definition \ref{horocon}) is a weaker notion of convexity for oriented immersed hypersurfaces in hyperbolic Space. To clarify, 
a hypersurface is said to be convex in our terminology if its principal curvatures satisfy $\kappa_i < 0$ in the canonical orientation.
\end{remark}

Now we are ready to introduce the horospherical metric on an immersed horospherically convex hypersurface as follows:

\begin{definition}\label{Def:horom}
Let $\phi: \rm{M}^n\to \h^{n+1}$ be an immersed horospherically convex hypersurface.
Then the Gauss map $G: \rm{M}^n\to \s^n$ is a local diffeomorphism.
We consider the locally conformally flat metric
\begin{equation}\label{horom}
g_h = \psi^*\meta{}{} = e^{2\tilde\rho} G^*g_{\ts ^n} 
\end{equation}
on $\rm{M}^n$ and call it the horospherical metric of the horospherically convex hypersurface $\phi$.
\end{definition}

It is clear that $g_h$ is the induced metric on the immersed hypersurface $\psi: \rm{M}^n\to\n^{n+1}_+$, when $\psi$ is spacelike. Considering 
$\psi:\rm{M}^n\to\n^{n+1}_+\subset\r^{1, n+1}$ as a surface of co-dimension 2 in 
the Minkowski spacetime $\r^{1, n+1}$, we know that $\phi(p)$ and $\eta(p)$ are two unit normal vectors at $\psi(p)$ and the second fundamental form is
$$
II_\psi (e_i, e_j) = (\frac 1{1-\kappa_i}\phi + \frac {\kappa_i}{1 - \kappa_i}\eta)g_h(e_i, e_j).
$$
Hence, the sectional curvature of the metric $g_h$ is
$$
\rm{K}_{g_h}(\frac {e_i}{1 - \kappa_i}, \frac{e_j}{1 - \kappa_j}) = 1 - \frac 1{1 - \kappa_i} - \frac1{1-\kappa_j}
$$
and Schouten tensor is
$$
\rm{Sch}_{g_h} (e_i, e_j) = (\frac 12 - \frac 1{1 - \kappa_i})g_h (e_i, e_j).
$$
When the Gauss map $G:\rm{M}^n\to\s^n$ of a horospherically convex hypersurface $\phi: \rm{M}^n\to \h^{n+1}$ is a diffeomorphism, 
one may push the horospherical metric $g_h$ onto the image $\Omega= G(\rm{M}^n)\subset\s^n$ and consider the conformal metric
$$
\hat g = (G^{-1})^*g_h = e^{2\rho}g_{\ts^n},
$$
where $\rho = \tilde\rho\circ G^{-1}$. For simplicity, we also refer to this conformal metric $\hat g$ as the horospherical metric. On the other hand, given a conformal metric $\hat g = e^{2\rho}g_{\ts^n}$ on a domain $\Omega$ in $\s^n$, one immediately recovers
the light cone map $\psi (x)= e^\rho(1, x): \Omega\to\n^{n+1}_+$. It turns out that one can solve for the map $\phi: \Omega\to\h^{n+1}$ and the unit normal vector $\eta:\Omega\to\s^{n+1}_1$
such that $\phi+\eta = \psi$.

\begin{theorem}[\cite{EGM}]\label{Th:representacion}
Let $\phi: \Omega \subseteq \s ^n\To \h^{n+1}$ be a piece of horospherically convex hypersurface with Gauss map $G(x)=x$. Then 
$\psi = e^\rho(1, x)$ and it holds

\begin{equation}\label{repfor}
\phi = \frac{e^{\rho}}{2}\left( 1+ e^{-2\rho} \left( 1+ |\nabla\rho|^2 \right)\right) (1,x) 
+ e^{-\rho} (0, -x +\nabla\rho).
\end{equation}
Moreover, the eigenvalues $\lambda_i$ of the Schouten tensor of the horospherical metric $\hat g = e^{2\rho}g_{\ts^n}$ and the principal curvatures $\kappa_i$ of $\phi$ are related by

\begin{equation}\label{lambdakappa}
\lambda _i = \frac{1}{2} -\frac{1}{1- \kappa _i} .
\end{equation}
Conversely, given a conformal metric $\hat g= e^{2\rho} g_{\ts^n}$ defined on a domain of the
sphere $\Omega \subseteq \s ^n$ such that the eigenvalues of its Schouten tensor
are all less than $1/2$, the map $\phi$ given
by \eqref{repfor} defines an immersed, horospherically convex hypersurface in $\h ^{n+1}$ whose Gauss map is $G(x)=x$ for $x\in \Omega$ and whose horospherical metric 
is the given metric $\hat g$.
\end{theorem}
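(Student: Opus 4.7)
For the forward direction, since the Gauss map is $G(x) = x$, formula \eqref{psiG} immediately gives $\psi = \phi + \eta = e^{\rho}(1,x)$. To extract $\phi$ itself, I would exploit the constraints $\langle \phi,\phi\rangle = -1$, $\langle \eta,\eta\rangle = 1$, $\langle \phi,\eta\rangle = 0$ together with $\phi + \eta = \psi$ and the tangential orthogonality $\langle d\phi(v),\eta\rangle = 0$ for every $v \in T_x\s^n$. Combining these produces $\langle \phi,\psi\rangle = -1$, and differentiating this yields $\langle \phi, d\psi(v)\rangle = 0$. I would then write $\phi = a(1,x) + b(1,-x) + (0,w)$ for unknown scalars $a,b$ and an unknown vector $w \in T_x\s^n$, and substitute $d\psi(v) = e^\rho\bigl(d\rho(v)(1,x) + (0,v)\bigr)$. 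The three scalar conditions uniquely solve for $a,b,w$ in terms of $\rho$ and $\nabla\rho$, and after simplification this reproduces the closed form \eqref{repfor}.

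The eigenvalue relation \eqref{lambdakappa} is essentially a repackaging of the Schouten formula displayed just before the theorem. In a principal-curvature frame $\{e_i\}$ of $\phi$, \eqref{metpsi} gives $g_h(e_i,e_j) = (1-\kappa_i)^2\delta_{ij}$, while the displayed Schouten identity reads $\mathrm{Sch}_{g_h}(e_i,e_j) = \bigl(\tfrac12 - \tfrac{1}{1-\kappa_i}\bigr)g_h(e_i,e_j)$. Hence the $g_h$-orthonormal frame $\tilde e_i := e_i/(1-\kappa_i)$ diagonalises the Schouten tensor with eigenvalues $\lambda_i = \tfrac12 - \tfrac{1}{1-\kappa_i}$, which is exactly \eqref{lambdakappa}. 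As an independent check, the conformal transformation law $P_{e^{2\rho}g_0} = P_{g_0} - \nabla^2_{g_0}\rho + d\rho\otimes d\rho - \tfrac12|\nabla_{g_0}\rho|_{g_0}^2\, g_0$, applied with $g_0 = g_{\ts^n}$ and $P_{g_0} = \tfrac12 g_0$, gives the same answer.

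For the converse, I take \eqref{repfor} as the definition of $\phi$ and set $\psi = e^\rho(1,x)$, $\eta = \psi - \phi$. A direct Minkowski computation, using only $|x|^2 = 1$ and $\langle x,\nabla\rho\rangle = 0$, shows $\langle\phi,\phi\rangle = -1$, $\phi_0 > 0$, $\langle\eta,\eta\rangle = 1$, and $\langle\phi,\eta\rangle = 0$, so $\phi$ is $\h^{n+1}$-valued and $\eta$ is a unit normal candidate. To conclude that $\eta$ is actually the unit normal to the immersion, one checks $\langle d\phi(v),\eta\rangle = 0$. Then by construction $\phi + \eta = e^\rho(1,x)$, so the Gauss map is $G(x) = x$, the horospherical support function is $\rho$, and the horospherical metric equals the given $\hat g$ via \eqref{horom}. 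Finally, the hypothesis $\lambda_i < 1/2$ forces, via the already-proved relation \eqref{lambdakappa}, $\kappa_i < 1$; by Lemma \ref{hc} this means $\phi$ is horospherically convex, and \eqref{metpsi} gives $d\psi(e_i) = (1-\kappa_i)e_i \neq 0$, so $\phi$ is an immersion.

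The main obstacle is verifying $\langle d\phi(v),\eta\rangle = 0$ in the converse. Differentiating \eqref{repfor} directly produces $\nabla^2\rho$ terms that are algebraically heavy. The cleanest way around this, I believe, is not to expand $d\phi$ by brute force, but to observe that \eqref{repfor} is the unique vector in $\r^{1,n+1}$ satisfying the three scalar constraints $\langle\phi,\phi\rangle = -1$, $\langle\phi,\psi\rangle = -1$, $\langle\phi,d\psi(v)\rangle = 0$ derived in the forward direction, and then differentiate these identities to recover $\langle d\phi(v),\eta\rangle = \langle d\phi(v),\psi - \phi\rangle = 0$ without ever touching the Hessian.
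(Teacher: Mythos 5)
Your forward direction is correct and is essentially the standard derivation (the paper itself does not prove this theorem --- it quotes it from \cite{EGM}, and its only independent take is the normal-flow/Fefferman--Graham route of Section \ref{Sect:Flow}, which recovers \eqref{lambdakappa} from \eqref{E: Hyp Exp}). The three constraints $\meta{\phi}{\phi}=-1$, $\meta{\phi}{\psi}=-1$, $\meta{\phi}{d\psi(v)}=0$ do determine $\phi$ uniquely: the linear constraints cut out the affine line $\phi_0+s\psi$, and since $\psi$ is null the quadratic constraint is linear in $s$, so solving your ansatz reproduces \eqref{repfor}. Reading \eqref{lambdakappa} off the displayed Schouten identity for $g_h$ is likewise fine.

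The gap is in the converse, precisely at the point you flag. Differentiating the scalar identities does give $\meta{d\phi(v)}{\phi}=0$ and $\meta{d\phi(v)}{\psi}=0$, hence $\meta{d\phi(v)}{\eta}=0$; that part of the workaround is valid. But it only shows that the image of $d\phi$ lies in the $n$-plane $\set{\phi,\eta}^{\perp}$; it does not show that $d\phi$ is injective, nor that the resulting principal curvatures are all less than $1$. Your justifications (``\eqref{metpsi} gives $d\psi(e_i)=(1-\kappa_i)e_i\neq 0$, so $\phi$ is an immersion'' and ``$\lambda_i<1/2$ forces, via \eqref{lambdakappa}, $\kappa_i<1$'') are circular: both \eqref{metpsi} and \eqref{lambdakappa} presuppose that $\phi$ is already an immersion with principal curvatures $\kappa_i$, which is what is to be proved. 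The hypothesis $\lambda_i<1/2$ can only enter through a genuine computation of $d\phi$: one finds (by differentiating \eqref{repfor}, or from \eqref{E: Hyp Exp} at $r=2$) that the first fundamental form of $\phi$ equals $\tfrac14(1-2\lambda_i)^2$ in a $\hat g$-orthonormal eigenframe of ${\rm Sch}_{\hat g}$, which is nondegenerate exactly because no eigenvalue equals $\tfrac12$; if some $\lambda_i=\tfrac12$ the map \eqref{repfor} genuinely fails to be an immersion even though $\psi$ does not, so no Hessian-free argument can close this step. The same computation, combined with $d\psi=(I-S)\circ d\phi$, yields $1-\kappa_i=2/(1-2\lambda_i)>0$, which is what actually delivers horospherical convexity with the correct sign and, in turn, \eqref{lambdakappa} for the constructed hypersurface.
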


To end this subsection, for the convenience of readers, we recall that on a Riemannian manifold $(M ^n , g)$, $n\geq 3$, the Riemann curvature tensor can be decomposed as
$$ {\rm Riem}_g = W_g + {\rm Sch}_g \odot g , $$where $W_g$ is the Weyl tensor,
$\odot$ is the Kulkarni-Nomizu product, and
$$ {\rm Sch}_g := \frac{1}{n-2}\left( {\rm Ric}_g - \frac{S_g}{2(n-1)}g\right) $$is
the Schouten tensor, where ${\rm Ric}_g$ and $S_g$ stand for the Ricci
curvature and scalar curvature of $g$ respectively. The eigenvalues of ${\rm Sch}_g$ are defined as the eigenvalues of the endomorphism $g^{-1}{\rm Sch}_g$. 

\begin{remark}\label{orientation}
To avoid confusion we remind readers that in our convention, for instance, the principal curvatures of a geodesic sphere in hyperbolic space $\h^{n+1}$ with respect to the outward orientation are less than  $-1$. \\

Finally we want to remark that, with the orientation and curvature condition for horospherically convex hypersurfaces here, it is perhaps more appropriate to say concave instead of convex. But, we continue to use these words as used in \cite{EGM} \cite{BEQ}.
\end{remark}

%----------------------------------------------

\subsection{Geodesic Defining Functions and Normal Flows}\label{Sect:Flow}

In this section we briefly summarize the work in \cite{BEQ}. We will take a viewpoint that is more reflective of conformal geometry and reinterpret the correspondence, Theorem \ref{Th:representacion},  as the association of conformal metrics and geodesic defining functions. Here one can think of geodesic defining functions as ways of describing foliations 
of hypersurfaces, or level set representations of normal flows. \\

A defining function for a part of the infinity $\Omega\subset\s^n$ of hyperbolic space $\h^{n+1}$ is a smooth function satisfying

\begin{enumerate}
\item  $r > 0$ in $\Omega\times (0, \epsilon_0)\subset\h^{n+1}$;
\item  $r = 0$ on $\Omega\times\{0\}\subset\s^n$; and
\item  $d r \neq 0$ on $\Omega\times\{0\}\subset\s^n$.
\end{enumerate} 
The hyperbolic space $(\h^{n+1}, \gH)$ is conformally compact in the sense that $r^2\gH$ extends to the infinity for any defining function $r$ when considering $\Omega=\s^n$. The metrics $r^2\gH|_{r=0}$ recover the standard conformal class of metrics on $\s^n$ when the defining functions vary. 

\begin{definition}\label{geo-def}
A defining function $r$ is said to be geodesic defining function  if
\begin{equation}\label{normal-flow}
|dr|_{r^2\gH} = 1,
\end{equation}
at least in a neighborhood of the infinity (i.e. $\Omega\times [0, \epsilon_0)$ for some positive number $\epsilon_0$). With geodesic defining function $r$ we have
$$ 
\gH = r^{-2}(dr^2 + g_r),
$$
where $g_r$ is a family of metrics on $\Omega\subset\s^n$. It is easily seen that there is a canonical association between the choice of conformal metric $r^2\gH|_{r=0}$ and the geodesic defining function $r$. 
\end{definition}
The advantage of using geodesic defining functions is evident from the following lemma of Fefferman and Graham \cite{FG2}.

\begin{lemma}[\cite{FG2}] \label{expansion}
Suppose that $g$ is a metric conformal to the standard round metric $g_{\ts^n}$ on a domain $\Omega\subset\s^n$ and that $r$ is the geodesic defining function associated with $g$. Then
$$
\gH = r^{-2}(dr^2 + g_r)
$$
where
\begin{equation}\label{E: Hyp Exp}
g_r = g - r^2 Sch_g + \frac{r^4}{4}Q_g
\end{equation}
and
$$
(Q_g)_{ij} = g^{kl} (Sch_g)_{ik}(Sch_g)_{jl}.
$$ 
\end{lemma}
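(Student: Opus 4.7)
The plan is to exploit the uniqueness of the geodesic defining function expansion together with the constant sectional curvature of $\gH$ to verify the claimed formula directly.

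First I would set up the geodesic defining function gauge. Given the conformal representative $g$ on $\Omega$, the eikonal equation $|dr|_{r^2\gH}=1$ with normalization $(r^2\gH)|_{\{r=0\}}=g$ admits a unique smooth solution in a collar of $\Omega$, obtained by flowing inward along the $\gH$-geodesics emanating normally from the ideal boundary. In this gauge $\gH$ takes the form $r^{-2}(dr^2+g_r)$ with $g_0=g$, and the family $g_r$ is uniquely determined by $g$ and $\gH$. Hence it suffices to exhibit any smooth one-parameter family $g_r$ with $g_0=g$ for which $r^{-2}(dr^2+g_r)$ has constant sectional curvature $-1$, and then invoke uniqueness.

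Next I would compute the Ricci tensor of the metric $r^{-2}(dr^2+g_r)$ in geodesic normal form and impose the Einstein condition $\mathrm{Ric}(\gH)=-n\,\gH$. Decomposing the resulting system into tangential, mixed, and normal components, expanding $g_r = g + r^2 g_{(2)} + r^4 g_{(4)} + \cdots$, and matching powers of $r$, the classical Fefferman--Graham recursion yields $g_{(2)} = -\mathrm{Sch}_g$ at order $r^0$ and $g_{(4)} = \tfrac{1}{4}\, Q_g$ at order $r^2$, which identifies precisely the quadratic and quartic coefficients asserted by \eqref{E: Hyp Exp}.

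Finally, to see that the expansion truncates, I would use that $\gH$ has constant sectional curvature $-1$ (equivalently, vanishing Weyl tensor). Rather than appealing to the full Fefferman--Graham obstruction theory, the most direct route is to substitute the proposed
\begin{equation*}
g_r \;=\; g \;-\; r^2\,\mathrm{Sch}_g \;+\; \tfrac{r^4}{4}\, Q_g
\end{equation*}
into $r^{-2}(dr^2+g_r)$ and verify by direct computation that the resulting Riemann tensor equals $-\tfrac{1}{2}\,\gH \odot \gH$. By uniqueness of the normal form, this metric then coincides with $\gH$ on the collar, so no higher coefficients are needed. The main obstacle is the algebraic bookkeeping in the Ricci calculation in the geodesic gauge and the verification of the quartic identity, both of which reduce to tracking traces of $\mathrm{Sch}_g$ against $g$; the conceptual point is simply that the expansion must terminate at order four because $\gH$ is Einstein with vanishing Weyl tensor.
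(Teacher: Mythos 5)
Your proposal is essentially correct, but it takes a genuinely different route from the paper. The paper obtains \eqref{E: Hyp Exp} by identifying the level sets of $r$ with the normal (parallel) flow $\phi_t = \phi\cosh t+\eta\sinh t$ of a horospherically convex hypersurface: the Riccati equation gives the induced metric $I_t(e_i,e_j)=(\cosh t-\kappa_i\sinh t)^2\delta_{ij}$ in closed form, and the change of variable $r=2e^{-t}$ together with the dictionary $\lambda_i=\frac12-\frac1{1-\kappa_i}$ converts this identity term-by-term into $g-r^2\,{\rm Sch}_g+\frac{r^4}{4}Q_g$; no recursion, no termination argument, and no Einstein equation are needed. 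Your route (geodesic gauge, Einstein condition, Fefferman--Graham recursion, truncation) is the standard conformal-geometry derivation and works, but with two caveats. First, the assertion that matching powers of $r$ in ${\rm Ric}(\gH)=-n\,\gH$ yields $g_{(4)}=\frac14 Q_g$ is not automatic: at order $r^4$ the recursion generically produces an additional obstruction-type (Bach) contribution, and in the critical dimension $n=4$ the full tensor $g_{(4)}$ is not determined by the Einstein condition at all; you must invoke the local conformal flatness of $g$ (it is conformal to $g_{\ts^n}$) to kill these terms. Second, your fallback---substituting the proposed $g_r$, checking constant sectional curvature $-1$ directly, and concluding by uniqueness of the geodesic normal form---is the step that actually carries the proof and is dimension-independent, so it should be the primary argument rather than an afterthought. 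What the paper's approach buys is an elementary, self-contained verification using only the hypersurface machinery already developed in Section 2.1; what yours buys is a derivation that explains \emph{why} the expansion has this form and generalizes to Poincar\'e--Einstein metrics beyond the exactly hyperbolic case.
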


By the definition of geodesic defining functions above, it is useful to realize that the level surfaces of a geodesic defining function $r$ are the normal flow of the boundary into $\h^{n+1}$ in the conformally compactified metric $r^2\gH$ as well as the normal flow of a horospherically convex hypersurface toward the infinity in $\h^{n+1}$ in the hyperbolic metric $\gH$, which was called parallel flows in \cite{Eps3}. After identifying the level surfaces of a geodesic defining function as horospherically convex hypersurfaces in $\h^{n+1}$, the relation \eqref{lambdakappa} in Theorem \ref{Th:representacion} is a direct consequence of the expansion \eqref{E: Hyp Exp} as observed in \cite{BEQ}. \\

For the convenience of readers we calculate the expansion \eqref{E: Hyp Exp} using Ricatti equations for principal curvatures in hyperbolic space of the normal flow.   
Let $\Omega \subset \s ^n$ be a domain in the sphere and $\phi : \Omega \to \h ^{n+1}$ be an oriented horospherically convex hypersurface so that $G(x)=x$ for all $x \in \Omega \subset  \s ^n$. Let $\set{\phi _t} _{t \in \tr}$ denote the normal flow of $\phi$ in hyperbolic space $\h^{n+1}$, that is, 
\begin{equation}\label{parallel-flow} 
\phi _t (x):= {\rm exp}_{\phi (x)} (t \eta (x))  = \phi(x)\cosh t + \eta(x)\sinh t: \Omega \To\h^{n+1}\subset\r^{1, n+!},
\end{equation}
where ${\rm exp}$ denotes the exponential map for the hyperbolic metric $\gH$.  Due to the Ricatti equations, the principal curvatures $\kappa_i ^t$ of $\phi _t$ are given by 
\begin{equation}\label{kappa-t}
\kappa_i ^t (p) = \frac{\kappa_i (p) - \tanh (t)}{1- \kappa_i (p)\tanh (t)} ,
\end{equation}
and the first fundamental form of $\phi _t$ is given by
\begin{equation}\label{H: Hyp Exp}
I_t (e_i , e_j) =(\cosh (t) - \kappa_i \sinh (t))^2 \delta _{ij} ,
\end{equation}
where $\set{e_1 , \cdots , e_n}$ is an orthonormal basis of principal curvature directions of $\phi$. From here one can easily check that the Gauss maps $G_t$ remain invariant under this flow and the horospherical metric of $\phi _t$ is $g_t := e^{2t}g_h$, where $g_h$ is the horospherical metric of $\phi$. Moreover, the change of variable $r = 2 e^{-t}$ shows that \eqref{H: Hyp Exp} is equivalent to \eqref{E: Hyp Exp}. \\

Conversely, given a conformal metric $\hat g:= e^{2 \rho} g_{\ts^n} $ on $\Omega \subset \s^n$ with Schouten tensor bounded from above, one considers a family of rescaled metric 
$\hat g_t = e^{2t}\hat g$. Choosing $t_0$ large so that $e^{-2t_0}\rm{Sch}_{\hat g} \leq  \frac 12$, it follows from Theorem \ref{Th:representacion} that the foliation of hypersurfaces 

\begin{equation}\label{phi-t}
\phi_t = \frac{e^{\rho+t}}{2}\left( 1+ e^{-2\rho-2t} \left( 1+ |\nabla\rho|^2 \right)\right) (1,x) 
+ e^{-\rho-t} (0, -x +\nabla\rho):\Omega\To\h^{n+1}
\end{equation}
for $t > t_0$ consists of immersed, horospherically convex hypersurfaces with Gauss map $G_t(x) = x:\Omega\to\s^n$ the identity. 

%------------------------------------------------
%------------------------------------------------

\section{Global Theory}\label{Sect: global}

From the previous section, we know that, for a piece of horospherically convex hypersurface in hyperbolic space $\h^{n+1}$, the Gauss map induces a canonical conformal metric on the infinity $\s^n$ locally. Conversely, given a conformal metric on a domain of $\s^n$, there is an immersed, horospherically convex hypersurface in hyperbolic space $\h^{n+1}$ whose horospherical metric is the given metric up to a rescale. In this section we establish a global correspondence between properly immersed, complete, horospherically convex hypersurfaces and complete conformal metrics on domains of $\s^n$.  Given a complete, properly immersed, horospherically convex hypersurface $\phi: \rm{M}^n\To\h^{n+1}$, the issues that concern us are the following:  

\begin{itemize}
\item
When is the horospherical metric $g_h$ complete? 
\item
When is its Gauss map injective?  
\item
When does the boundary at infinity of the hypersurface coincide with the boundary of the Gauss map image?  
\end{itemize}
In the other direction, given a complete conformal metric $\hat g$ on a domain $\Omega$ of the infinity $\s^n$ with Schouten tensor bounded from above, we are concerned with the following
issues:

\begin{itemize}
\item
When does it correspond to a complete, immersed, horospherically convex hypersurface? 
\item
When is the corresponding hypersurface proper?
\item
When does the boundary of the domain coincide with the boundary at infinity of the hypersurface?
\end{itemize}
A final, yet most important question is: when are the leaves of the normal flow given in \eqref{parallel-flow} or \eqref{phi-t} 
eventually embedded? Equivalently, one may ask when there is a geodesic defining function associated with
a given complete conformal metric on a domain $\Omega\subset\s^n$ defined for a positive distance uniformly in the domain $\Omega$. 

%------------------------------------------------------------------------
\subsection{Uniform Convexity vs Bounded Curvature}\label{Sect: uniform-bounded}

We are able to make a satisfactory correspondence if we restrict ourselves to the cases where hypersurfaces are uniformly horospherically convex or equivalently 
the conformal metrics are of bounded curvature. Let us start with the definition of uniformly horospherically convex.

\begin{definition}\label{uniform-cv} Let $\phi: \rm{M}^n\to \h^{n+1}$ be an immersed, oriented hypersurface. We say that $\phi$ is uniformly horospherically convex if there is a number 
$\kappa_0<1$ such that all the principal curvatures $\kappa_i$ at all points in $\rm{M}^n$ are less than or equal to $\kappa_0$.
\end{definition}

Hence, in the light of \eqref{lambdakappa}, one can easily see that, for a conformal metric $\hat g = e^{2\rho}g_{\ts^n}$ on a domain $\Omega\subset\s^n$ with Schouten tensor
bounded from above, the corresponding hypersurface $\phi_t$ given in \eqref{phi-t} is an immersed, uniformly horospherically convex hypersurface for $t$ large enough if and only if
the Schouten tensor of $\hat g$ is also bounded from below. On the other hand, when the conformal metric is of bounded curvature, the corresponding 
hypersurfaces $\phi_t$ given in \eqref{phi-t} are immersed and uniformly horospherically convex with bounded principal curvatures for $t$ large enough. Based on the above observation we make the following definition.

\begin{definition}\label{admissible-realizable} An oriented hypersurface $\phi: \rm{M}^n\to\h^{n+1}$ is said to be admissible if it is properly immersed, complete, uniformly horospherically convex with injective Gauss map $G:\rm{M}^n\to\s^n$. Meanwhile, a complete conformal metric $\hat g= e^{2\rho}g_{\ts^n}$ on a domain $\Omega\subset\s^n$ is said to be realizable if it is of 
bounded curvature.
\end{definition}

When we start with a properly immersed, complete, horospherically convex hypersurface $\phi: \rm{M}^n\to\h^{n+1}$ with injective Gauss map $G:\rm{M}^n\to\s^n$, from Theorem \ref{Th:representacion}, we know $\phi$ induces a conformal metric $\hat g$ on the image of the Gauss map $\Omega = G(\rm{M}^n)\subset\s^n$ with Schouten tensor bounded from 
above by one half. Then, the question to ask is if the conformal metric $\hat g$ is complete? One can easily construct an example to show that the answer in general is negative. We will present 
a properly immersed, complete, horospherically convex hypersurface whose horospherical metric is not complete at the end of this subsection. On the other hand, when the hypersurface is uniformly horospherically convex, the completeness of the horospherical metric is a simple consequence of  \eqref{metpsi}.

\begin{lemma}\label{uniform-complete} Suppose that $\phi: \rm{M}^n\to\h^{n+1}$ is a complete, immersed, uniformly horospherically convex hypersurface. Then the horospherical metric $g_h$ is
complete on $\rm{M}^n$.
\end{lemma}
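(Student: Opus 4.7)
The proof will be essentially a one-line consequence of the formula \eqref{metpsi} together with the uniform convexity hypothesis. The plan is as follows.

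First, I would fix a point $p \in \mathrm{M}^n$ and let $\{e_1,\dots,e_n\}$ be an orthonormal basis (with respect to the induced hyperbolic metric $g_\phi = \phi^* g_{\mathbb{H}^{n+1}}$) of principal curvature directions of $\phi$ at $p$, with principal curvatures $\kappa_1,\dots,\kappa_n$. By the identity \eqref{metpsi} recalled in Section \ref{Sect:horom}, the horospherical metric $g_h = \psi^*\langle\,,\,\rangle$ satisfies
\[
g_h(e_i,e_j) \;=\; (1-\kappa_i)^2 \,\delta_{ij},
\]
while $g_\phi(e_i,e_j)=\delta_{ij}$ by construction. Uniform horospherical convexity (Definition \ref{uniform-cv}) furnishes a constant $\kappa_0<1$ with $\kappa_i \leq \kappa_0$ at every point, so $(1-\kappa_i)^2 \geq (1-\kappa_0)^2 > 0$ everywhere on $\mathrm{M}^n$. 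Since the basis was arbitrary, this yields the pointwise comparison of bilinear forms
\[
g_h \;\geq\; (1-\kappa_0)^2 \, g_\phi \quad \text{on } \mathrm{M}^n.
\]

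Next I would promote this pointwise comparison to a statement about distances and completeness. For any piecewise smooth curve $\gamma$ in $\mathrm{M}^n$ one has $L_{g_h}(\gamma) \geq (1-\kappa_0)\, L_{g_\phi}(\gamma)$, hence $d_{g_h}(p,q) \geq (1-\kappa_0)\, d_{g_\phi}(p,q)$ for all $p,q \in \mathrm{M}^n$. By the hypothesis that $\phi$ is complete, the hyperbolic induced metric $g_\phi$ is a complete Riemannian metric on $\mathrm{M}^n$. The standard Hopf--Rinow criterion (equivalently: a divergent curve in a complete manifold must have infinite length) then forces $g_h$ to be complete: if $\gamma:[0,T)\to \mathrm{M}^n$ is a divergent curve leaving every compact set, then $L_{g_\phi}(\gamma) = \infty$ by completeness of $g_\phi$, hence $L_{g_h}(\gamma) = \infty$ as well.

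The only potential subtlety is ensuring the comparison $g_h \geq (1-\kappa_0)^2 g_\phi$ is uniform across $\mathrm{M}^n$ and not merely pointwise with a constant depending on $p$; but this is precisely the content of the uniform convexity assumption, so there is no real obstacle. I do not expect this lemma to require any delicate argument beyond combining \eqref{metpsi} with the elementary fact that a Riemannian metric bounded below by a positive multiple of a complete metric is itself complete.
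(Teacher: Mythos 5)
Your proof is correct and is exactly the argument the paper intends: the paper does not write out a proof of this lemma but simply remarks that it ``is a simple consequence of \eqref{metpsi}'', and your filling-in --- the uniform lower bound $1-\kappa_i\geq 1-\kappa_0>0$ giving $g_h\geq(1-\kappa_0)^2\,\phi^*g_{\mathbb{H}^{n+1}}$, followed by the standard divergent-curve criterion --- is precisely that consequence. No issues.
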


When we start with a complete conformal metric $\hat g = e^{2\rho}g_{\ts^n}$ on a domain $\Omega\subset \s^n$ with Schouten tensor bounded from above, from Theorem \ref{Th:representacion},  we know that for $t$ large enough the hypersurface $\phi_t$ given by  \eqref{phi-t} is immersed and horospherically convex . Then a natural question to ask is if the hypersurface $\phi_t$ is complete and proper.  One again easily observes 

\begin{lemma}\label{proper-complete} Suppose that $\hat g = e^{2\rho}g_{\ts^n}$ is a complete conformal metric on a domain $\Omega\subset\s^n$ with Schouten tensor bounded 
from above and that $\phi_t: \Omega\to\h^{n+1}$ given by \eqref{phi-t} is immersed.  In addition we assume that 
$$
\beta (x) := e^{2\rho (x)} + |\nabla\rho|^2 (x) \To +\infty \quad\text{as $x\to\partial\Omega$}.
$$ 
Then $\phi_t$ is a properly immersed, complete, horospherically convex hypersurface for $t$ large enough.
\end{lemma}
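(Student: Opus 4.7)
The plan is to verify three things for $t$ sufficiently large: that $\phi_t$ is horospherically convex, that it is proper, and that the induced metric is complete. The explicit formula \eqref{phi-t} together with Theorem \ref{Th:representacion} will do the work.

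Horospherical convexity will come from Theorem \ref{Th:representacion} applied to the rescaled conformal metric $\hat g_t := e^{2t}\hat g$. Its Schouten-tensor eigenvalues are $e^{-2t}\lambda_i$, where the $\lambda_i$ are bounded above by hypothesis; hence for $t$ large enough they are uniformly less than $1/2$, so Theorem \ref{Th:representacion} yields that $\phi_t$ is an immersed horospherically convex hypersurface whose horospherical metric is $\hat g_t$ and whose Gauss map is the identity on $\Omega$.

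The central step is properness. Reading the Minkowski time coordinate off of \eqref{phi-t},
\begin{equation*}
(\phi_t)_0(x) = \tfrac12\, e^{\rho+t} + \tfrac12\, e^{-\rho-t}\bigl(1+|\nabla\rho|^2\bigr),
\end{equation*}
and since $(\phi_t)_0(x) = \cosh d_\gH(\mathcal{O}, \phi_t(x))$ for $\mathcal{O} = (1,0,\dots,0)\in\h^{n+1}$, properness reduces to showing $(\phi_t)_0(x)\to\infty$ whenever $x$ leaves every compact subset of $\Omega$. By compactness of $\bar\Omega\subset\s^n$, such a sequence $x_k$ has a subsequence converging to some boundary point, so $\beta(x_k)\to\infty$. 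A dichotomy closes the argument: either $e^{2\rho(x_k)}\to\infty$, in which case the first summand above blows up; or along a further subsequence $e^{2\rho(x_k)}$ stays bounded, in which case $e^{-\rho(x_k)-t}$ is bounded below and the hypothesis $\beta\to\infty$ forces $|\nabla\rho|^2(x_k)\to\infty$, so the second summand blows up. In either case $\phi_t(x_k)$ escapes every compact set in $\h^{n+1}$, which gives the required compactness of $\phi_t^{-1}(K)$ for every compact $K\subset\h^{n+1}$.

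Completeness is then automatic: because lengths of curves are preserved by the pullback, $d_\gH(\phi_t(p),\phi_t(q)) \le d_{\phi_t^*\gH}(p,q)$, so any Cauchy sequence in $(\Omega,\phi_t^*\gH)$ maps to one bounded in hyperbolic distance from a base point; properness then confines it to a compact subset of $\Omega$, and a convergent subsequence combined with the Cauchy property forces full convergence. I expect the only delicate point to be the dichotomy in the properness argument, which is precisely why $\beta$ is defined as the sum $e^{2\rho}+|\nabla\rho|^2$ rather than either summand alone.
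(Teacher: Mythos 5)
Your proof is correct, but your properness argument runs along a different track from the paper's. The paper passes to the Poincar\'{e} ball model and computes the image explicitly,
$$
\tau\circ\phi_t = \frac{e^{2(\rho+t)} + |\nabla\rho|^2 - 1}{e^{2(\rho+t)} + 2e^{\rho+t} + |\nabla\rho|^2 + 1}\Bigl(x + \tfrac{2}{e^{2(\rho+t)}+|\nabla\rho|^2-1}\nabla\rho\Bigr),
$$
and shows that $\beta(x)\to+\infty$ forces the scalar factor to tend to $1$ and the correction term to tend to ${\bf 0}$, so that $\tau\circ\phi_t(x)\to x_0$ whenever $x\to x_0\in\partial\Omega$. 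That is a strictly stronger conclusion than properness: it identifies \emph{where} the surface goes at infinity, and this is exactly the side product the paper records as Corollary \ref{bdy-regularity} ($\partial_\infty\phi_t(\Omega)=\partial\Omega$). Your route via the Minkowski height $(\phi_t)_0 = \cosh d_{\gH}(\mathcal{O},\phi_t(x))$ and the dichotomy on which summand of $\beta$ blows up is cleaner and entirely sufficient for the lemma as stated, but it only shows escape to infinity, not convergence to the correct boundary point, so it would not by itself yield the corollary that the paper extracts from this proof. Your explicit treatment of horospherical convexity (via the rescaled metric $e^{2t}\hat g$ and Theorem \ref{Th:representacion}) and of completeness (properly immersed submanifolds of complete manifolds are complete) fills in steps the paper leaves implicit, and both are sound.
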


\proof Here we shall use the Poincar\'{e} ball model of $\h^{n+1}$. We like to use stereographic projection in Minkowski spacetime to realize the coordinate change between the two models. Namely,
$$ 
\begin{matrix} 
 & \h^{n+1} \subset \r^{1, n+1}  & \To & \b^{n+1}  \subset \r^{n+1} =\{\bar x\in\r^{1, n+1}: x_0=0\} \\  &  & \tau  & \\ & (x_0, x_1 , \cdots , x_{n+1}) & \To  
 & \dfrac{1}{1+ x_0} (x_1 , \cdots ,  x_{n+1})  \end{matrix}.
$$
Hence, omitting the variable $t$ for simplicity, we have, 
$$ 
\tau\circ\phi = \dfrac{e^{2\rho} + |\nabla\rho|^2 - 1}{e^{2\rho} +  2e^\rho + |\nabla\rho|^2 + 1}\left( x + Y(x) \right): \Omega\To \b^{n+1} 
$$ 
with 
$$ 
Y(x)= \dfrac{2}{e^{2\rho} +|\nabla\rho|^2 -1} \nabla\rho .
$$
Now it is easily seen that if $\beta (x) \to + \infty$, then
\begin{align*}
\left(\dfrac{e^{2\rho} + |\nabla\rho|^2 -1}{e^{2\rho} +2e^\rho + |\nabla\rho|^2 +1  }\right) (x) &\To  1 \\
\intertext{and}
\left(\dfrac{2}{e^{2\rho} + |\nabla\rho|^2 - 1} \nabla\rho \right) (x) &\To  {\bf 0}.
\end{align*}
Therefore, if $\beta (x) \to + \infty$ as $x\to x_0\in\partial\Omega$, it then follows that 
\begin{equation}\label{continuity}
\tau\circ\phi(x) \to x_0
\end{equation}
as desired.
\endproof

One important side product of the proof of Lemma \ref{proper-complete} is the following:

\begin{corollary}\label{bdy-regularity} Suppose that $\hat g = e^{2\rho}g_{\ts^n}$ is a complete conformal metric on a domain $\Omega\subset\s^n$ with Schouten tensor bounded 
from above and that $\phi_t: \Omega\to\h^{n+1}$ is given in \eqref{phi-t}.  In addition we assume that 
$$
\beta (x) := e^{2\rho (x)} + |\nabla\rho|^2 (x) \To +\infty \quad\text{as $x\to\partial\Omega$}.
$$ 
Then $\phi_t$ is a properly immersed, complete, horospherically convex hypersurface, and
$$
\partial_\infty\phi_t (\rm{M}^n) = \partial\Omega,
$$  
for $t$ large enough.
\end{corollary}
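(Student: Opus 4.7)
The corollary strengthens Lemma \ref{proper-complete} only by the extra identification $\partial_\infty\phi_t(\Omega) = \partial\Omega$, so the properly immersed, complete, horospherically convex part is already in hand for $t$ large. My plan is to work in the Poincar\'{e} ball model $\b^{n+1}$ using the stereographic projection $\tau$ introduced in the proof of Lemma \ref{proper-complete}. Applying that proof with $\rho + t$ in place of $\rho$ and setting
\[
\beta_t(x) := e^{2(\rho(x)+t)} + |\nabla\rho(x)|^2,
\]
I first observe that the growth hypothesis transfers: since $\beta_t \geq \min(1, e^{2t})\,\beta$, the assumption $\beta(x) \to +\infty$ as $x \to \partial\Omega$ forces $\beta_t(x) \to +\infty$ as $x \to \partial\Omega$ as well. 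I will then establish the two inclusions $\partial\Omega \subseteq \partial_\infty\phi_t(\Omega)$ and $\partial_\infty\phi_t(\Omega) \subseteq \partial\Omega$.

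The forward inclusion $\partial\Omega \subseteq \partial_\infty\phi_t(\Omega)$ follows immediately from the convergence \eqref{continuity} in the proof of Lemma \ref{proper-complete}. For any $x_0 \in \partial\Omega$ and any sequence $x_k \in \Omega$ with $x_k \to x_0$, the same computation (with $\rho+t$ in place of $\rho$) yields $\tau \circ \phi_t(x_k) \to x_0 \in \s^n = \partial\b^{n+1}$, witnessing $x_0 \in \partial_\infty\phi_t(\Omega)$.

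For the reverse inclusion, the key input is the routine algebraic identity
\[
1 - |\tau \circ \phi_t(x)|^2 \;=\; \frac{4\, e^{\rho(x)+t}}{\beta_t(x) + 2 e^{\rho(x)+t} + 1},
\]
obtained by expanding the squared Euclidean norm in the ball-model expression for $\tau\circ\phi_t$, using that $x \perp \nabla\rho$ in $\r^{n+1}$ since $\nabla\rho$ is the spherical gradient. Now suppose $p \in \partial_\infty\phi_t(\Omega)$, realized by $\tau\circ\phi_t(x_k) \to p \in \s^n$. Since $\rho$ is smooth on the open domain $\Omega$, both $\rho$ and $|\nabla\rho|$ are uniformly bounded on every compact subset of $\Omega$; hence by the displayed identity, $|\tau \circ \phi_t(x)|$ stays bounded away from $1$ on compact subsets of $\Omega$. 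Consequently $\{x_k\}$ cannot be contained in any compact subset of $\Omega$; by compactness of $\s^n$ and passage to a subsequence, $x_k \to x_0 \in \overline\Omega \setminus \Omega = \partial\Omega$. The forward inclusion then forces $p = x_0 \in \partial\Omega$, completing the proof.

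The main obstacle here is conceptually slight but technically crucial: it is the identity for $1 - |\tau\circ\phi_t|^2$ that converts ``$x$ stays in a compact subset of $\Omega$'' into ``$\tau\circ\phi_t(x)$ stays away from $\partial\b^{n+1}$'', supplying exactly the compactness needed to upgrade the one-sided conclusion \eqref{continuity} of Lemma \ref{proper-complete} to the desired equality $\partial_\infty\phi_t(\Omega) = \partial\Omega$.
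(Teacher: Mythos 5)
Your proof is correct and follows the route the paper intends: the paper offers no separate proof of this corollary, deferring entirely to the computation in Lemma \ref{proper-complete}, and your forward inclusion is exactly the convergence \eqref{continuity} applied with $\rho+t$ in place of $\rho$. The identity $1-|\tau\circ\phi_t|^2 = 4e^{\rho+t}/(\beta_t+2e^{\rho+t}+1)$ checks out (using $\langle x,\nabla\rho\rangle=0$) and correctly supplies the reverse inclusion that the paper leaves implicit, though that direction also follows more softly from the fact that the continuous map $\tau\circ\phi_t$ sends compact subsets of $\Omega$ to compact subsets of the open ball $\b^{n+1}$.
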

 
One may refer to Definition \ref{infinity} for the boundary at infinity $\partial_\infty \phi(\rm{M}^n)$ of a hypersurface $\phi$ in hyperbolic space $\h^{n+1}$. It seems to us that it is a rather subtle issue to determine when $\beta(x)\to+\infty$ as $x\to\partial\Omega$ if one only assumes the metric $\hat g$ to be compete and the Schouten tensor to be bounded from above. We settle the issue by using Proposition 8.1 in \cite{CHY}, where it is shown that the conformal factor $\rho\to +\infty$ as $x\to\partial\Omega$ if the scalar
curvature is bounded from below. Notice that in our context, since we always assume the Schouten tensor is bounded from above, the fact that the scalar curvature is bounded 
from below implies the curvature of the conformal metric $\hat g$ is bounded. 
 
\begin{proposition}\label{chy} Suppose that $\hat g = e^{2\rho}g_{\ts^n}$ is a complete conformal metric on a domain $\Omega\subset\s^n$ with Schouten tensor bounded 
and that $\phi_t: \Omega\to\h^{n+1}$ is given in \eqref{phi-t}. Then $\phi_t$ is a properly immersed, complete, uniformly horospherically convex hypersurface with uniformly 
bounded principal curvature, and
$$
\partial_\infty\phi_t (\rm{M}^n) = \partial\Omega,
$$
for $t$ large enough. 
 \end{proposition}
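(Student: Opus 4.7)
The plan is to reduce the statement to Lemma \ref{proper-complete} and Corollary \ref{bdy-regularity} via the rescaling trick of Theorem \ref{Th:representacion}. The only nontrivial input that boundedness of the Schouten tensor supplies is a blow-up of the conformal factor $\rho$ at $\partial\Omega$, and this is precisely what Proposition 8.1 of \cite{CHY} provides.

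First I would deduce the blow-up of $\rho$. Since ${\rm Sch}_{\hat g}$ is bounded, its trace is bounded, so the scalar curvature $S_{\hat g} = 2(n-1)\,{\rm tr}_{\hat g}({\rm Sch}_{\hat g})$ is bounded and in particular bounded from below. Proposition 8.1 of \cite{CHY} then forces $\rho(x) \to +\infty$ as $x\to\partial\Omega$, so
$$
\beta(x) = e^{2\rho(x)} + |\nabla\rho|^2(x) \to +\infty \quad\text{as } x\to\partial\Omega.
$$

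Next I would rescale. For any constant $t$, the metric $\hat g_t = e^{2t}\hat g$ has the same Schouten $(0,2)$-tensor as $\hat g$, but the eigenvalues of its Schouten endomorphism are $e^{-2t}\lambda_i$, where the $\lambda_i$ are the uniformly bounded eigenvalues of the Schouten endomorphism of $\hat g$. For $t$ sufficiently large these all lie in a uniform small neighborhood of $0$, and in particular uniformly below $1/2$. Theorem \ref{Th:representacion} then produces the immersed, horospherically convex hypersurface $\phi_t$ from \eqref{phi-t}, and the correspondence \eqref{lambdakappa} forces the principal curvatures $\kappa_i^t$ to accumulate uniformly near $-1$ while remaining uniformly bounded. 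This is exactly uniform horospherical convexity together with uniformly bounded principal curvatures.

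Finally I would assemble the conclusion. Since $\hat g$ is complete, ${\rm Sch}_{\hat g}$ is bounded from above, $\phi_t$ is immersed for $t$ large, and $\beta\to+\infty$ at $\partial\Omega$, Lemma \ref{proper-complete} gives that $\phi_t$ is a properly immersed, complete, horospherically convex hypersurface, and Corollary \ref{bdy-regularity} gives $\partial_\infty\phi_t(\Omega) = \partial\Omega$. Combined with the previous paragraph, uniform horospherical convexity and the uniform bound on principal curvatures are automatic, completing the proof. The main obstacle is really the very first step: mere completeness of $\hat g$ does not by itself force $\rho$ to blow up at $\partial\Omega$ — completeness could a priori be absorbed by $|\nabla\rho|$ — and it is the curvature bound together with the Chang--Hang--Yang estimate that rules this out.
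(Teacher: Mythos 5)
Your proof is correct and follows essentially the same route the paper takes: the paper also settles the only delicate point (that $\beta\to+\infty$ at $\partial\Omega$) by invoking Proposition 8.1 of \cite{CHY} for the blow-up of $\rho$ under a lower scalar curvature bound, and then combines Lemma \ref{proper-complete}, Corollary \ref{bdy-regularity}, and the rescaling observation from Section \ref{Sect: uniform-bounded} exactly as you do. Nothing to add.
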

 
Again, one may refer to Definition \ref{infinity} for the boundary at infinity $\partial_\infty \phi(\rm{M}^n)$ of a hypersurface $\phi$ in hyperbolic space $\h^{n+1}$. To summarize we have the following main result in this subsection for the global correspondence.

\begin{theorem}\label{uniform-bounded} Suppose that $\phi: \rm{M}^n\to\h^{n+1}$ is an admissible hypersurface with the hyperbolic
Gauss map $G:\rm{M}^n\to\Omega\subset\s^n$. Then it induces a realizable metric on the domain $\Omega$. Moreover $\partial_\infty\phi (\rm{M}^n) 
= \partial\Omega$. \\

On the other hand, suppose that $e^{2\rho}g_{\ts^n}$ is a realizable metric on a domain $\Omega\subset\s^n$. Then $\phi_t$ given in \eqref{phi-t} is an admissible hypersurface
with bounded principal curvature and $\partial_\infty\phi_t(\rm{M}^n) = \partial\Omega$, for $t$ large enough.
\end{theorem}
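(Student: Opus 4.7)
The plan is to combine the local correspondence of Theorem \ref{Th:representacion} with the global results built up in this section, and to translate between the principal curvature condition and the Schouten tensor condition via the relation \eqref{lambdakappa}.

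For the forward direction, suppose $\phi$ is admissible with injective Gauss map $G\colon M^n \to \Omega \subset \s^n$. By Lemma \ref{hc}, $G$ is a local diffeomorphism, hence a diffeomorphism onto $\Omega$. Thus $\rho = \tilde\rho \circ G^{-1}$ and $\hat g = (G^{-1})^* g_h = e^{2\rho}g_{\ts^n}$ descend to well-defined objects on $\Omega$, and $G$ is a Riemannian isometry from $(M^n, g_h)$ onto $(\Omega, \hat g)$. Lemma \ref{uniform-complete} supplies completeness of $g_h$, hence of $\hat g$. The relation \eqref{lambdakappa} rewrites the hypothesis $\kappa_i \leq \kappa_0 < 1$ as
$$\tfrac{1}{2} - \tfrac{1}{1-\kappa_0} \,\leq\, \lambda_i \,<\, \tfrac{1}{2},$$
so $\hat g$ has uniformly bounded Schouten tensor. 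Since any conformal metric on a domain of $\s^n$ is locally conformally flat, its Weyl tensor vanishes and a bounded Schouten tensor controls the full Riemann tensor; hence $\hat g$ has bounded curvature and is realizable.

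For the reverse direction, suppose $\hat g$ is realizable on $\Omega$. Vanishing Weyl again makes bounded curvature equivalent to bounded Schouten, so Proposition \ref{chy} applies verbatim: for $t$ large enough, $\phi_t$ is properly immersed, complete, and uniformly horospherically convex, and satisfies $\partial_\infty \phi_t(\Omega) = \partial\Omega$. By Theorem \ref{Th:representacion} the Gauss map of $\phi_t$ is the identity on $\Omega$, hence trivially injective. To see that the principal curvatures are bounded on both sides (not merely bounded above), invert \eqref{lambdakappa} to get $\kappa_i = 1 - (1/2 - \lambda_i)^{-1}$; the Schouten eigenvalues of the rescaled metric $e^{2t}\hat g$ equal $e^{-2t}\lambda_i$ and tend to zero uniformly as $t \to \infty$, so the principal curvatures of $\phi_t$ approach $-1$ uniformly and are bounded on both sides.

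To finish the forward direction I must identify $\partial_\infty \phi(M^n)$ with $\partial\Omega$. By the uniqueness part of Theorem \ref{Th:representacion}, the reparametrization $\phi \circ G^{-1}\colon \Omega \to \h^{n+1}$ is given exactly by the representation formula \eqref{repfor} in terms of the derived $\rho$. Since the complete metric $\hat g$ has bounded Schouten tensor, the result of \cite{CHY} invoked in the proof of Proposition \ref{chy} forces $\rho \to +\infty$ as $x \to \partial\Omega$, so in particular $\beta(x) = e^{2\rho(x)} + |\nabla\rho|^2(x) \to +\infty$ at the boundary. The coordinate computation carried out in the proof of Lemma \ref{proper-complete} then shows $\tau \circ \phi \circ G^{-1}(x) \to x$ in the Poincar\'e ball model $\b^{n+1}$ as $x \to \partial\Omega$, yielding $\partial_\infty \phi(M^n) = \partial\Omega$. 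This last boundary identification is the main obstacle: one must invoke the nontrivial \cite{CHY} blow-up result to rule out the possibility that $\phi$ has accumulation points at infinity away from $\partial\Omega$, since completeness and bounded Schouten alone — rather than any direct assumption on $\rho$ — must force $\rho \to +\infty$ at $\partial\Omega$.
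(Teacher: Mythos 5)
Your proposal is correct and follows essentially the same route the paper takes: Theorem \ref{uniform-bounded} is stated there as a summary of Lemma \ref{uniform-complete}, the curvature relation \eqref{lambdakappa} (with vanishing Weyl tensor converting the two-sided Schouten bound into bounded curvature), Proposition \ref{chy} together with the \cite{CHY} blow-up of $\rho$, and the Poincar\'e-ball computation from the proof of Lemma \ref{proper-complete}, which are exactly the ingredients you assemble. Your explicit observation that the one-sided bound $\kappa_i\leq\kappa_0<1$ already yields a two-sided bound on the Schouten eigenvalues, and the uniform convergence $\kappa_i^t\to -1$ for the reverse direction, are the right details and agree with the paper's implicit argument.
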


The above result provides a back-and-forth relationship between complete conformal metrics on domains of the sphere and horospherically convex hypersurfaces in $\h^{n+1}$ with prescribed boundary at infinity. This allows to relate the results of \cite{LN} and \cite{MP} for singular solutions for conformal metrics on the sphere with those of, among others, \cite{S, GS} 
for hypersurfaces in $\h^{n+1}$ with prescribed boundary at infinity.  \\

Before we end this subsection we would like to present an easy example to show that one in general does not get the completeness of horospherical metric. Let us consider $\Omega = \s^{n-1}\times (-1, 1) \subset \s^{n-1} \times (-\frac \pi 2,\frac \pi 2) = \s^n\setminus \{{\mathbf S}, {\mathbf N}\} \subset \s^n$. In this parameterzation,  the standard round metric is given as
$$
g_{\ts^n} = ds^2 + \cos^2 s g_{\ts^{n-1}}
$$
and the Christoffel symbols are 
$$
\Gamma^s_{ss}= \Gamma^s_{si} = 0 \text{ and } \Gamma^s_{ij} = \tan s (g_{\ts^n})_{ij}
$$
for $i, j = 2, 3, \cdots, n$. Let 
$$
\rho (\theta , s) =\rho (s) = - \frac 12 \log ( 1- s^2)
$$ 
and $\hat g = e^{2 \rho} g_{\ts^n}$ be the conformal metric on $\Omega$. If we consider the meridian $\gamma : (0, 1) \to \Omega $ given by $\gamma (s) = (\theta _0 , s) $ 
where $\theta _0 \in \s ^{n-1}$ is fixed, then we easily see that
$$ 
\int _\gamma  e ^{\rho}  \, dv_{g_{\ts^n}} = \int_0^1 \frac 1{\sqrt{1 - s^2}} ds  < +\infty ,
$$
which implies that $\hat g$ is not complete in $\Omega$. On the other hand,  we recall
$$
{\rm Sch}[\hat g]_{ik} = {\rm Sch}[g_{\ts^n}]_{ik} - \rho_{i,k} + \rho_i\rho_k - \frac 12 |\nabla\rho|^2 (g_{\ts^n})_{ik}
$$
and calculate 
$$
\rho_s = \frac s{1 - s^2}
$$
and the only nonzero terms for the Hessian are 
$$
\rho_{s,s} =  \frac {1 +  s^2}{(1-s^2)^2},\quad  \rho_{i,j} = -  \frac {s\tan s}{ 1 -s^2}(g_{\ts^n})_{ij}.
$$
Hence we notice
$$
- \rho_{s, s} + \rho_s^2 - \frac 12 \rho_s^2 = - \frac {1 +s^2}{(1-s^2)^2} + \frac 12 \frac {s^2}{(1 -s^2)^2} = - \frac {1 +\frac 12s^2}{(1 -s^2)^2} < 0
$$
and
$$
 - \rho_{i,j} + \rho_i\rho_j - \frac 12 |\nabla\rho|^2 (g_{\ts^n})_{ij}  = (\frac {s\tan s}{1 - s^2} - \frac {s^2}{2(1-s^2)^2})  (g_{\ts^n})_{ij} \leq C(g_{\s^n})_{ij}
 $$
for some $C>0$, $i, j = 2, 3, \cdots, n$, and $s\in (-1, 1)$. Therefore  we consider the immersed, horospherically convex hypersurface $\phi_t$ given by \eqref{phi-t} corresponding to $(\Omega , \hat g)$
for $t$ sufficiently large.  Since $\rho \to +\infty$ as $s$ approaches $1$, from Lemma \ref{proper-complete}, we know that $\phi_t$ is proper and complete.
 We remark here that in  fact ${\rm Sch}[\hat g]$ is not bounded from below in this example, which implies that the hypersurface $\phi_t$ is 
not uniformly horospherically convex.

%-----------------------------------------------------------------------------------------------
\subsection{Injectivity of Hyperbolic Gauss Maps}\label{Sect:injectivity}

We next describe an explicit example to show that indeed the Gauss map of a noncompact, complete, properly immersed oriented horospherically convex hypersurface may 
not be injective. The essential reasons are that one can have a convex, self-intersecting, closed curve in $\h^2$ and that higher dimensional hyperbolic space $\h^{n+1}$ is a foliation of totally geodesic $\h^2$ via translation isometries. \\

Let $r,R: \r \to \r $ be smooth $4\pi -$periodic functions defined by
$$ 
r(u):= \sin(\frac{u}{2}) \cos (u) , \, \, \, R(u):= \cos (\frac{u}{2})- \frac{1}{3} \cos(\frac{3u}{2}),
$$
and let $ \alpha (u) : \r \to \h ^2\subset \r^{1, 2}$ be given by
$$ 
\alpha (u) = (\cosh (r(u)) \cosh (R(u)), \sinh (r(u))\cosh (R(u)), \sinh (R(u))).
$$
Then $\alpha$ is non-embedded and has nonnegative curvature. Actually, in the geodesic coordinate, its profile is as depicted:

\begin{center}
\DeclareGraphicsExtensions{jpg}
\includegraphics[scale=0.8]{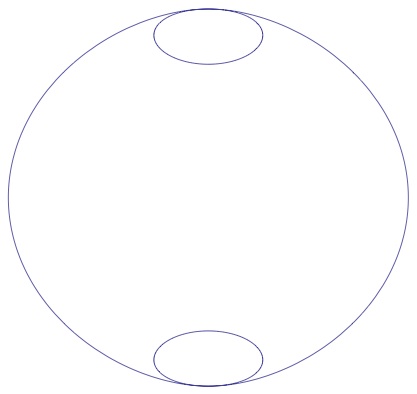}\\
\mbox{{\small $\alpha (u) := (\sin(\frac{u}{2}) \cos (u), \cos (\frac{u}{2})- \frac{1}{3} \cos(\frac{3u}{2}))$}}
\end{center}
So the desired hypersurface is generated from the above immersed convex closed curve in a totally geodesic surface $\h^2$ by $(n-1)$-families of translation isometries along geodesics orthogonal to the totally geodesic surface $\h^2$ in $\h^{n+1}$. The resulting hypersurface is a properly immersed convex hypersurface $\phi: \r^{n-1}\times \s^1\to \h^{n+1}$ where by construction the principal curvatures of the hypersurface are all zero except one is positive. Hence, the scalar curvature of the horospherical metric of such immersed convex hypersurface is strictly negative when $n\geq 3$.
Also, the boundary at infinity of this hypersurface is a $\s^{n-2}$. In other words, the image of the Gauss map is $\s^n\setminus\s^{n-2} \simeq \r^{n-1}\times\s^1$, which is not simply connected.
Considering the normal vector along the profile curve one sees that the Gauss map is a three-sheet covering map. \\

In general, it is a rather difficult issue to determine when the Gauss map is injective. On the other hand, the Gauss map of an immersed, horospherically convex hypersurface in hyperbolic space is a development map from the parameter space ${\rm M}^n$ equipped with the horospherical metric into the sphere.
Hence, due to Kulkarni and Pinkall \cite{K-P}, we have the following:

\begin{lemma}\label{covering} 
Suppose that $\phi: {\rm M}^n\to\h^{n+1}$ is an immersed, horospherically convex hypersurface and that the horospherical metric $g_h$ is complete on ${\rm M}^n$. Then the Gauss map is a covering onto its image in the sphere. Hence, the Gauss map is injective if its image in the sphere is simply connected.
\end{lemma}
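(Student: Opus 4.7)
The plan is to split the argument along the two assertions of the lemma. The first assertion (covering onto image) will follow from the Kulkarni--Pinkall theory of developing maps of conformally flat manifolds \cite{K-P}, applied to the horospherical metric on $M^n$. The second assertion (injectivity under simple connectedness) is then a purely topological consequence of covering space theory.

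First, I would record the two structural facts about $G$ that set up the application of \cite{K-P}. By Lemma \ref{hc}, under horospherical convexity we have $\kappa_i<1$ everywhere, so $dG$ is invertible at every point of $M^n$; thus $G:M^n\to\s^n$ is a local diffeomorphism. Combined with Definition \ref{Def:horom}, which gives $g_h=e^{2\tilde\rho}G^*g_{\ts^n}$, we see that $G$ exhibits $(M^n,g_h)$ as a locally conformally flat Riemannian manifold and is itself the developing map of that conformally flat structure into the round sphere $(\s^n,g_{\ts^n})$. The hypothesis of the lemma is precisely that $g_h$ is complete, which is the natural completeness assumption for this conformally flat structure.

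Next, I would invoke the Kulkarni--Pinkall theorem, which ensures that a developing map of a complete locally conformally flat Riemannian manifold into $\s^n$ is a covering map onto its image. Concretely, the core of their argument is a path-lifting construction: given a smooth curve in $G(M^n)$ ending at a limit point, one uses local invertibility of $G$ together with the completeness of $g_h$ to extend the lift to the endpoint, thereby showing that $G$ has the path-lifting property over its image, and hence that it is a covering map onto $G(M^n)\subset\s^n$. This yields the first statement of the lemma.

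For the second statement, once $G:M^n\to G(M^n)$ is known to be a covering map, simple connectedness of $G(M^n)$ forces the covering to have a single sheet, so $G$ is a homeomorphism onto its image and in particular injective. The main obstacle in this plan is not so much mathematical as expository: one must make sure that the version of the Kulkarni--Pinkall theorem one cites is the one that takes completeness of the locally conformally flat metric (rather than of the pulled-back round metric) as hypothesis, since it is $g_h$ and not $G^*g_{\ts^n}$ that is assumed complete here; beyond that, the argument is essentially a black-box application of \cite{K-P}.
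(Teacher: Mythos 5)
Your proposal takes essentially the same route as the paper: the paper gives no independent argument but simply observes that $G$ is the developing map of the locally conformally flat structure $({\rm M}^n, g_h)$ and invokes the Kulkarni--Pinkall theorem as a black box for the covering property, with injectivity over a simply connected image following from standard covering space theory. Your added remarks on local invertibility via Lemma \ref{hc} and the path-lifting mechanism behind the cited theorem are consistent elaborations of that same argument.
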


In this subsection we will show that the Gauss map of a properly immersed, complete,  horospherically convex hypersurface is injective when it is regular at infinity 
(cf. Definition \ref{regular-end}) and the boundary at infinity (cf. Definition \ref{infinity}) is small, which will be made precise below. We will also show that the injectivity of the Gauss map follows under certain curvature conditions on the hypersurface, which is a straightforward consequence of the celebrated injectivity of development maps of Schoen and Yau \cite{SY, SY1}.  \\
 
In the light of Lemma \ref{covering} the Gauss map is injective when its image in the sphere is simply connected. 
A good way to study images of Gauss maps is to consider the boundaries at infinity
of hypersurfaces. Let us first define the boundary at infinity of a noncompact hypersurface in $\h^{n+1}$. 

\begin{definition}\label{infinity}
Suppose that $\phi: {\rm M}^n\to\h^{n+1}$ is a properly immersed hypersurface. We define the boundary at infinity 
$\partial_\infty\phi({\rm M}^n)$ to be the collection of points $x\in\s^n$ such that 
there is a sequence $x_n$ on the hypersurface in the Poincar\'{e} ball $\b^{n+1}$ model of hyperbolic space that converges to $x$ in $\overline{\b^{n+1}}$ in Euclidean topology.
\end{definition}

In general, the end behaviors of properly immersed, complete, horospherically convex hypersurfaces may be very wild. The following regularity of Gauss maps at infinity seems to be
a very efficient and geometric way to restrict the behavior of the end and in many ways excludes the persistent sharp turns of a surface approaching the boundary at infinity.

\begin{definition}\label{regular-end}
Suppose that $\phi: {\rm M}^n\to \h^{n+1}$ is a properly immersed hypersurface. The Gauss map is said to be regular at infinity if,  for each $p\in \partial_\infty\phi({\rm M}^n)\subset \s^n$,
$$
\lim_{i\to \infty} G(q_i) = p
$$
for $q_i\in {\rm M}^n$, $\phi (q_i)\to p$. 
\end{definition}

As a consequence of the regularity of the Gauss map at infinity,  we have the following:

\begin{lemma}\label{end-boundary} Suppose that $\phi: {\rm M}^n\to \h^{n+1}$ is a properly immersed, complete, horospherically convex hypersurface and that the Gauss map $G: {\rm M}^n\to\s^n$ is regular at infinity. Then
\begin{equation}\label{reg-bdy}
\partial G({\rm M}^n) \subset \partial_\infty \phi({\rm M}^n).
\end{equation}
\end{lemma}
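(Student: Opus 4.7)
The plan is to take any $p\in\partial G({\rm M}^n)$, extract a sequence $q_i\in{\rm M}^n$ with $G(q_i)\to p$, and show $p\in\partial_\infty\phi({\rm M}^n)$ by tracking the companion sequence $\phi(q_i)$ in the Poincar\'e ball $\b^{n+1}$. First I would note that by Lemma \ref{hc} the Gauss map is a local diffeomorphism, so $G({\rm M}^n)$ is open in $\s^n$ and therefore $\partial G({\rm M}^n)=\overline{G({\rm M}^n)}\setminus G({\rm M}^n)$. For $p\in\partial G({\rm M}^n)$ one can thus choose $q_i\in{\rm M}^n$ with $G(q_i)\to p$ while $p\notin G({\rm M}^n)$.

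Next I would rule out that $\{\phi(q_i)\}$ has any subsequence converging to an interior point of $\h^{n+1}$. If $\phi(q_{i_k})\to q\in\h^{n+1}$, properness of $\phi$ forces a further subsequence of $q_{i_k}$ to converge in ${\rm M}^n$ to some $q_0$ with $\phi(q_0)=q$; continuity of $G$ then gives $G(q_{i_k})\to G(q_0)\in G({\rm M}^n)$, contradicting $p\notin G({\rm M}^n)$. Hence, viewing $\phi({\rm M}^n)\subset\b^{n+1}$ in the Poincar\'e ball model, every accumulation point of $\{\phi(q_i)\}$ in $\overline{\b^{n+1}}$ (Euclidean topology) must lie on $\s^n$.

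Finally, passing to a subsequence I may assume $\phi(q_i)\to p'\in\s^n$ in Euclidean topology, so that $p'\in\partial_\infty\phi({\rm M}^n)$ by Definition \ref{infinity}. The regularity of the Gauss map at infinity (Definition \ref{regular-end}), applied at the point $p'$ to the sequence $q_i$, now yields $G(q_i)\to p'$ along this subsequence. But the full sequence satisfies $G(q_i)\to p$, so $p=p'\in\partial_\infty\phi({\rm M}^n)$, which is the desired inclusion. The only delicate point is the topological setup — establishing openness of $G({\rm M}^n)$ and reducing to Euclidean accumulation on $\s^n$ — after which the conclusion is forced directly by combining properness with the definition of regularity at infinity; no curvature information beyond horospherical convexity is actually used.
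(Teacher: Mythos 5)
Your proof is correct and follows essentially the same route as the paper's: extract $q_i$ with $G(q_i)\to p$, use properness to exclude accumulation of $\phi(q_i)$ at interior points of $\b^{n+1}$, and use regularity of the Gauss map at infinity to identify the Euclidean limit $p'\in\partial_\infty\phi({\rm M}^n)$ of $\phi(q_i)$ with $p$. The paper phrases this contrapositively and is terser about the final identification $p=p'$, but the ingredients and logic are the same.
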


\proof Let $p\notin\partial_\infty\phi({\rm M}^n)$. We would like to show that $p\notin\partial G({\rm M}^n)$. Otherwise, $p\in \partial G({\rm M}^n)$, which means that $p\notin G({\rm M}^n)$ and there is a sequence $p_i\in G({\rm M}^n)$ such that $p_i\to p$ in $\s^n$. Let $q_i\in {\rm M}^n$ such that $G(q_i) = p_i$. At least for a subsequence, we may assume $\phi(q_i)$ converges to $x \in \bar{\mathbb B}^{n+1}$. By the completeness of the hypersurface, if $x \in \b^{n+1}$, then $p= G(q)$ for some $q\in {\rm M}^n$ and $\phi(q) = x$, which contradicts the fact that
$p\notin G({\rm M}^n)$. On the other hand, if $x\in \partial\b^{n+1}$,  one may conclude that $x \in\partial_\infty\phi({\rm M}^n)\subset\s^n$ by the regularity of the Gauss map, which
contradicts the fact that $p\notin\partial_\infty\phi({\rm M}^n)$. 
\endproof

We also observe the following:

\begin{proposition}\label{finite-covering} Suppose that $\phi: {\rm M}^n\to \h^{n+1}$ is a properly immersed, complete, horospherically convex hypersurface with complete horospherical metric and  
that the Gauss map is regular at infinity. Then either the Gauss map is a finite covering or
$$
G({\rm M}^n)\subset \partial_\infty\phi({\rm M}^n).
$$
\end{proposition}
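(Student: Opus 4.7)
The plan is to use Lemma \ref{covering} as the starting point: since the horospherical metric is complete, the Gauss map $G$ is a covering map onto $G({\rm M}^n)\subset\s^n$, and (working componentwise if necessary) it has a well-defined and constant number of sheets $N\in\mathbb{N}\cup\{\infty\}$. The dichotomy in the statement then reduces to showing that the only obstruction to $N$ being finite is the degenerate situation in which every point of the image already lies on the boundary at infinity. Accordingly, I would assume $N=\infty$ and deduce $G({\rm M}^n)\subset\partial_\infty\phi({\rm M}^n)$.

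Fix any $p\in G({\rm M}^n)$ and let $\{q_i\}_{i\in\mathbb{N}}\subset {\rm M}^n$ be the fiber $G^{-1}(p)$, which is infinite by assumption. Since $G$ is a covering, each $q_i$ has an evenly covered neighborhood, so the fiber is a discrete subset of ${\rm M}^n$ with no accumulation point. Because $\phi$ is a proper immersion, this discreteness is preserved under $\phi$: if some subsequence of $\{\phi(q_i)\}$ converged to a point of $\h^{n+1}$, then properness of $\phi$ would lift this convergent subsequence to a convergent subsequence of $\{q_i\}$ in ${\rm M}^n$, contradicting discreteness of the fiber.

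Therefore, by compactness of the closed Poincar\'e ball $\overline{\b^{n+1}}$, one may pass to a subsequence and assume $\phi(q_i)\to x$ in Euclidean topology for some $x\in\s^n=\partial\b^{n+1}$. By Definition \ref{infinity} this gives $x\in\partial_\infty\phi({\rm M}^n)$, and the regularity of the Gauss map at infinity, Definition \ref{regular-end}, forces $G(q_i)\to x$. But $G(q_i)=p$ for every $i$, so $p=x\in\partial_\infty\phi({\rm M}^n)$. Since $p$ was an arbitrary point of $G({\rm M}^n)$, this proves $G({\rm M}^n)\subset\partial_\infty\phi({\rm M}^n)$, as required.

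There is no real obstacle in this argument; the content is entirely captured by combining Lemma \ref{covering} (complete horospherical metric gives a covering), the properness of $\phi$ (discrete fibers cannot bunch up inside $\h^{n+1}$), and the regularity of $G$ at infinity (which forces the Gauss image of any sequence escaping to $\s^n$ to track the Euclidean limit of $\phi(q_i)$). The one small subtlety worth flagging is the appeal to constancy of the sheet number, which requires either connectedness of ${\rm M}^n$ or an identical argument applied on each connected component.
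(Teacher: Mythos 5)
Your proof is correct and takes essentially the same route as the paper's: Lemma \ref{covering} gives the covering structure, properness of $\phi$ rules out accumulation of the image of an infinite fiber inside $\h^{n+1}$, and regularity of the Gauss map at infinity then forces $p\in\partial_\infty\phi({\rm M}^n)$. Your remark about the constancy of the sheet number (connectedness, or arguing per component) is a fair point that the paper glosses over, but it does not change the argument.
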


\proof For any $p\in G({\rm M}^n)$, we consider the preimage $G^{-1}(p)\subset{\rm M}^n$ and the set $P = \{\phi(q): q\in G^{-1}(p)\} \subset \b^{n+1}$ of points on the surface. 
First we show that no limit point of $P$ is inside $\b^{n+1}$. Otherwise, suppose that $x\in \b^{n+1}$ is a limit point of $P$. 
Then $x\in \phi({\rm M}^n)$ due to the completeness of the surface. By the properness of the immersion $\phi$ we may conclude that $G^{-1}(p)$ has a limit point in ${\rm M}^n$, which contradicts 
the fact that the Gauss map is a local diffeomorphism.  \\

On the other hand, since $\phi$ is proper, when $G^{-1}(p)$ is infinite so is the set $P$. In this case $P$ can only have limit points in the boundary at infinity $\partial_\infty\phi({\rm M}^n)$. Therefore, $p\in \partial_\infty\phi({\rm M}^n)$ due to the regularity of the Gauss map at infinity. The conclusion of this proposition then follows from Lemma \ref{covering}. 
\endproof

Proposition \ref{finite-covering} tells us that the Gauss map is a finite covering when the Gauss map is regular at infinity and the boundary at infinity $\partial_\infty\phi({\rm M}^n)$ of the surface has no interior points. We know that a subset in $\s^n$ has no interior point if, for example, it is of Hausdorff dimension less than $n$. In fact, when the boundary at infinity $\partial_\infty\phi({\rm M}^n)$ of a surface is of Hausdorff dimension less than $n-2$, it turns out that the Gauss map has to be injective.

\begin{theorem}\label{small-infinity}
Suppose that $\phi: {\rm M}^n\to \h^{n+1}$ is a properly immersed, complete,  horospherically convex hypersurface with complete horospherical metric and that its Gauss map is regular at infinity.  Then the Gauss map is injective and
$$
\partial G({\rm M}^n) = \partial_\infty \phi({\rm M}^n),
$$
provided that $\partial_\infty\phi({\rm M}^n)\subset \s^n$ is small in the sense that its Hausdorff dimension is less than $n-2$.
\end{theorem}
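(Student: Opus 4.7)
Write $\Omega=G({\rm M}^n)$. The plan is to show that $G:{\rm M}^n\to \Omega$ is a finite covering onto a simply connected open subset of $\s^n$, and hence is actually a homeomorphism, and to identify the complement of $\Omega$ with $\partial_\infty\phi({\rm M}^n)$ along the way.

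By Lemma \ref{hc}, $dG$ is invertible at every point, so $\Omega$ is open in $\s^n$. Proposition \ref{finite-covering} then yields a dichotomy, and the alternative $G({\rm M}^n)\subseteq \partial_\infty\phi({\rm M}^n)$ is incompatible with $\Omega$ being nonempty and open, since $\partial_\infty\phi({\rm M}^n)$ has Hausdorff dimension strictly less than $n-2$ and therefore empty interior in $\s^n$. Thus $G:{\rm M}^n\to\Omega$ is a finite covering.

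Next I would locate $\Omega$ precisely. By Lemma \ref{end-boundary}, $\partial\Omega\subseteq \partial_\infty\phi({\rm M}^n)$. A closed set in $\s^n$ of Hausdorff dimension strictly less than $n-1$ cannot separate $\s^n$, so $\s^n\setminus\partial_\infty\phi({\rm M}^n)$ is connected; since it sits inside the disjoint decomposition $\s^n\setminus\partial\Omega=\Omega\sqcup (\s^n\setminus\overline{\Omega})$ and meets $\Omega$ (otherwise $\Omega\subseteq\partial_\infty\phi({\rm M}^n)$, which violates the dimension bound), the whole of it lies in $\Omega$. Hence $\s^n\setminus\Omega\subseteq \partial_\infty\phi({\rm M}^n)$. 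Conversely, if some $p\in\Omega$ also lay in $\partial_\infty\phi({\rm M}^n)$, pick $q_i\in{\rm M}^n$ with $\phi(q_i)\to p$ in the Poincar\'e ball. Regularity of $G$ at infinity gives $G(q_i)\to p$, and the finite covering property forces the $q_i$ eventually into a relatively compact trivializing neighborhood of the finite set $G^{-1}(p)$, keeping $\phi(q_i)$ in a compact subset of $\h^{n+1}$ and contradicting $\phi(q_i)\to p\in\s^n$. Therefore $\Omega\cap\partial_\infty\phi({\rm M}^n)=\emptyset$, so $\s^n\setminus\Omega=\partial_\infty\phi({\rm M}^n)$, and since the latter has empty interior one gets $\partial\Omega=\s^n\setminus\Omega=\partial_\infty\phi({\rm M}^n)$, which is the second assertion of the theorem.

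To finish, the complement of $\Omega$ is now known to be a closed subset of $\s^n$ of Hausdorff dimension strictly less than $n-2$. I would invoke the classical topological fact that the complement of such a set in $\s^n$ is simply connected: any loop can be filled by a singular $2$-disk in $\s^n$ that by general position (or a capacity-theoretic argument) is perturbed off a closed set of Hausdorff codimension greater than $2$. Simple connectivity of $\Omega$ forces the finite cover $G:{\rm M}^n\to\Omega$ to be trivial, whence $G$ is a homeomorphism and in particular injective. The main obstacle is precisely this last topological step; I would quote it from standard sources rather than reprove it here, since the remainder of the argument follows transparently from the dichotomy of Proposition \ref{finite-covering}, Lemma \ref{end-boundary}, and the end-regularity hypothesis.
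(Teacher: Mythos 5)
Your argument is correct and follows essentially the same route as the paper: Lemma \ref{end-boundary} gives $\partial G({\rm M}^n)\subseteq\partial_\infty\phi({\rm M}^n)$, the Hausdorff dimension bound makes $G({\rm M}^n)$ connected and simply connected, and the covering property (Lemma \ref{covering}/Proposition \ref{finite-covering}) then forces injectivity. Your treatment of the boundary identity $\s^n\setminus\Omega=\partial_\infty\phi({\rm M}^n)$ is in fact more detailed than the paper's, which derives it in one line from injectivity and regularity at infinity.
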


\begin{proof} By the above Lemma \ref{end-boundary}, we know that
$$
\partial G({\rm M}^n) \subset \partial_\infty\phi({\rm M}^n)
$$
is small in the sense that its Hausdorff dimension is less than $n-2$. Then $G({\rm M}^n)$ is connected and simply connected in $\s^n$. Because, any loop in $G({\rm M}^n)$ can be deformed into a point in $\s^n$ without leaving $G({\rm M}^n)$, when $\s^n\setminus G({\rm M}^n)$ is of codimension bigger than $2$ in $\s^n$. Notice that $\s^n\setminus G({\rm M}^n)
=\partial G({\rm M}^n)$ when $\partial G({\rm M}^n)$ is of Hausdorff dimension less than $n-1$. \\

Thus, in the light of Lemma \ref{covering}, the Gauss map is injective. This implies that no point in $\partial_\infty\phi(M^n)$ can be in the image $G({\rm M}^n)$ of the Gauss map, which implies
$$
\partial G(M^n) = \partial_\infty\phi(M^n).
$$
\end{proof}

As noted earlier, the Gauss map is a development map from a locally conformally flat manifold $(M^n, g_h)$ into $\s^n$. Therefore, we may apply the celebrated result on the injectivity of the developing map in \cite{SY, SY1}.

\begin{theorem}\label{inj}
Suppose that $\phi: M^n\to \h^{n+1}$ is an immersed, complete, horospherically convex hypersurface and suppose that 
\begin{equation}\label{scalar}
\sum_{i=1}^n\frac 2{1- \kappa_i} \leq n,
\end{equation}
where $\kappa_i$ are the principal curvatures of $\phi$ in $\h^{n+1}$. Then the Gauss map is injective. Hence, the hypersurface $\phi$ is admissible and
$$
\partial G({\rm M}^n) = \partial_\infty\phi({\rm M}^n).
$$
\end{theorem}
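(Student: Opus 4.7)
The plan is to recognize the pinching \eqref{scalar} as a nonnegative scalar curvature condition on the horospherical metric $g_h$ and then to invoke the Schoen-Yau injectivity theorem for developing maps of complete locally conformally flat manifolds with $S\geq 0$.

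First I would translate the hypothesis into curvature data. By \eqref{lambdakappa} the Schouten eigenvalues of $g_h$ are $\lambda_i = \tfrac{1}{2} - \tfrac{1}{1-\kappa_i}$, so the assumption $\sum_i \tfrac{2}{1-\kappa_i} \leq n$ is equivalent to $\sum_i \lambda_i \geq 0$, i.e., $S_{g_h}\geq 0$. Since each summand $\tfrac{2}{1-\kappa_i}$ is positive (because $\kappa_i < 1$ by horospherical convexity) and the total is at most $n$, we also obtain $\kappa_i \leq 1 - \tfrac{2}{n}$ pointwise. Hence $\phi$ is uniformly horospherically convex in the sense of Definition \ref{uniform-cv}, and Lemma \ref{uniform-complete} then ensures that $g_h$ is complete on $M^n$.

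Next I would apply the Schoen-Yau injectivity theorem \cite{SY,SY1}. By Lemma \ref{covering}, completeness of $g_h$ already implies that $G: M^n \to G(M^n)\subset \s^n$ is a covering onto its image. On the other hand, the lift $\tilde G = G\circ \pi: \tilde M^n \to \s^n$ is a conformal local diffeomorphism, hence a developing map for the LCF structure on $\tilde M^n$, and Schoen-Yau gives that $\tilde G$ is injective with complement $\s^n \setminus \tilde G(\tilde M^n) = \s^n \setminus G(M^n)$ of Hausdorff dimension at most $(n-2)/2$. For $n \geq 3$ this complement has codimension at least two, so $G(M^n)$ is simply connected; the covering $G$ is then a homeomorphism and in particular injective.

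For admissibility and the boundary equality, I would push $g_h$ forward by $G^{-1}$ to a complete conformal metric $\hat g = e^{2\rho}g_{\ts^n}$ on $\Omega := G(M^n)$. The eigenvalue bounds $\lambda_i \in [(1-n)/2,\, 1/2)$ show $\hat g$ has bounded Schouten tensor, so Proposition \ref{chy} applies and yields, via the normal flow \eqref{phi-t}, a properly immersed, complete, uniformly horospherically convex hypersurface $\phi_t$ with $\partial_\infty \phi_t(\Omega) = \partial\Omega$ for $t$ large. Since our $\phi$ is the $t=0$ leaf of the same flow, properness and the identification of boundaries at infinity transfer back to $\phi$, giving admissibility and $\partial_\infty \phi(M^n) = \partial\Omega = \partial G(M^n)$. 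The main obstacle I anticipate is this last descent step, since the relation $\phi_t = \phi\cosh t + \eta\sinh t$ requires a careful argument to rule out non-properness at $t = 0$ being smoothed away at large $t$.
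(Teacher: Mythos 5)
Your proposal is correct and follows essentially the same route as the paper: the pinching \eqref{scalar} gives uniform horospherical convexity (hence completeness of $g_h$ by Lemma \ref{uniform-complete}) and nonnegative scalar curvature of $g_h$ via \eqref{lambdakappa}, and injectivity then comes from the Schoen--Yau theorem for conformal (developing) maps, which the paper applies directly to $G:({\rm M}^n,g_h)\to\s^n$ rather than passing through the universal cover and a simple-connectivity argument as you do. Your ``descent'' worry at the end is not a real obstacle: the condition $\beta=e^{2\rho}+|\nabla\rho|^2\to\infty$ of Lemma \ref{proper-complete} is independent of $t$, and since $\phi=\phi_0$ is already an immersed, horospherically convex hypersurface by hypothesis, properness and $\partial_\infty\phi({\rm M}^n)=\partial\Omega=\partial G({\rm M}^n)$ follow at $t=0$ exactly as in Corollary \ref{bdy-regularity}.
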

 
\begin{proof} This turns out to be a rather straightforward consequence of Theorem 3.5 on page 262 in \cite{SY1}. First, the assumption \eqref{scalar} implies that the hypersurface is 
in fact uniformly horospherically convex. Hence, the horospherical metric $g_h$ is complete in the light of \eqref{metpsi}. Secondly, due to the explicit relation \eqref{lambdakappa} in 
Theorem \ref{Th:representacion}, the assumption \eqref{scalar} implies that the scalar curvature of the horospherical metric $g_h$ is nonnegative. Thus, by Theorem 3.5 on page 262 in the book \cite{SY1}, the Gauss map $G$ of $\phi$ as a development map is injective. The remaining claim then follows from the fact that the surface $\phi$ is now known to be admissible. 
\end{proof}

For the convenience of readers we provide Theorem 3.5 on page 262 of \cite{SY1} in the following:

\begin{theorem}[\cite{SY1}]
Let $(M^n, g)$ be a complete Riemannian manifold with nonnegative scalar curvature. Suppose that $\Phi: M^n\to \s^n$ is a conformal map. Then $\Phi$ is injective and $\partial\Phi(M^n)\subset\s^n$ has zero Newton capacity.
\end{theorem}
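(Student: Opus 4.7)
The plan is to reduce the geometric statement to a potential-theoretic statement about a superharmonic function on a subdomain of $\R^n$, via the conformal transformation law for scalar curvature and stereographic projection. The capacity bound will then follow directly, and the injectivity will be a covering-theoretic consequence of the capacity bound.

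First, since $\Phi$ is a conformal immersion, I write $g = u^{4/(n-2)}\Phi^\ast g_{\ts^n}$ with $u>0$ on $M^n$. The standard conformal transformation formula for scalar curvature rewrites $R_g\geq 0$ as the elliptic inequality
\[
-\,\tfrac{4(n-1)}{n-2}\,\Delta_{\Phi^\ast g_{\ts^n}} u \,+\, n(n-1)\, u \;\geq\; 0
\]
on $M^n$. Choosing a point $q\in\s^n\setminus\overline{\Phi(M^n)}$ (otherwise the boundary statement is vacuous and injectivity reduces to the closed case) and composing with stereographic projection $\sigma:\s^n\setminus\{q\}\to\R^n$, the two successive conformal changes combine into $g = v^{4/(n-2)}(\sigma\circ\Phi)^\ast g_{\R^n}$ for an explicit $v>0$ on $M^n$. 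Since the pullback of the Euclidean metric has vanishing scalar curvature, the zeroth order term drops out and the conformal Laplacian inequality reduces to
\[
-\,\Delta_{(\sigma\circ\Phi)^\ast g_{\R^n}}\, v \;\geq\; 0
\]
on $M^n$; via the inverse branches of the covering $\sigma\circ\Phi$ (which exists by the Kulkarni--Pinkall observation invoked as Lemma \ref{covering}), this reads locally as the pure Euclidean superharmonicity $-\Delta_{\R^n} v \geq 0$ on the developed set $U=(\sigma\circ\Phi)(M^n)\subset\R^n$.

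Next, I translate completeness of $g$ into quantitative blow-up of $v$ near $\partial U$: along any divergent curve in $M^n$ whose $(\sigma\circ\Phi)$-image limits at $\partial U$, divergence of $\int v^{2/(n-2)}\,ds$ forces $v$ to explode faster than any fixed power of the reciprocal of distance to $\partial U$. Comparing $v$ against Newtonian potentials of $\partial U$ and invoking a Wiener-type capacity criterion for positive superharmonic functions on $U$ then forces $\mathrm{cap}(\partial U)=0$ in $\R^n$, which transfers back to $\mathrm{cap}\,\partial\Phi(M^n)=0$ in $\s^n$ by bi-Lipschitz equivalence of $\sigma$ on compact sets away from $q$. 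Combined with the covering property of $\Phi:M^n\to\Phi(M^n)$, the vanishing of the capacity of the complement of $\Phi(M^n)$ makes $\Phi(M^n)$ simply connected (any loop can be contracted through $\s^n$ across a polar set), so the covering is one-sheeted and $\Phi$ is injective.

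The principal difficulty, and the step where the nonnegativity of $R_g$ is genuinely used, is the capacity estimate: mere superharmonicity of $v$ does not imply blow-up at $\partial U$, so one must combine the geometric completeness of $g$ with barrier comparisons against Newtonian potentials centred at accumulation points of $\partial U$ to force the capacity to vanish. This is the technical heart of the Schoen--Yau argument; once it is in hand, the injectivity assertion follows as a comparatively soft consequence of covering theory.
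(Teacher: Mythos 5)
The paper does not prove this statement at all --- it is quoted verbatim from [SY1, Theorem 3.5, p.~262] precisely so that Theorem \ref{inj} can invoke it as a black box --- so your sketch must be judged as a reconstruction of the Schoen--Yau argument, and it has a genuine gap at the final, injectivity step. You deduce injectivity from the chain ``$\Phi$ is a covering onto its image $U$'' plus ``$\mathrm{cap}(\s^n\setminus U)=0$ implies $U$ is simply connected.'' The second implication is false: a compact set with finite $(n-2)$-dimensional Hausdorff measure has zero Newtonian capacity but need not have codimension greater than two, so loops need not contract in its complement. The standard example is $\s^{n-2}\subset\s^n$, which has zero Newtonian capacity while $\pi_1(\s^n\setminus\s^{n-2})\cong\zz$; the paper's own example in Section 3.2 exhibits a complete, properly immersed convex hypersurface whose Gauss map is exactly a three-sheeted covering onto $\s^n\setminus\s^{n-2}$. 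This is also why Theorem \ref{small-infinity} must assume the boundary has Hausdorff dimension strictly less than $n-2$ (strictly stronger than capacity zero) to run the ``simply connected image'' argument via Lemma \ref{covering}, whereas the Schoen--Yau theorem has no smallness hypothesis: its injectivity assertion is \emph{not} a topological corollary of the capacity bound. Their proof treats injectivity directly --- take $p_1\neq p_2$ in a fiber $\Phi^{-1}(x_0)$, stereographically project from $x_0$, and play the positive superharmonic conformal factor, which decays like $|y|^{2-n}$ along both branches tending to $p_1$ and $p_2$ (because $g$ is smooth there), against the blow-up forced by completeness elsewhere, reaching a contradiction via the minimum principle rather than covering-space theory.

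The first half of your plan (two conformal changes down to the flat metric, superharmonicity of $v$, completeness forcing blow-up of $v$, hence $\mathrm{cap}(\partial U)=0$) is the right skeleton for the capacity assertion, but two details need repair. First, the claim that $v$ must blow up faster than any fixed power of $1/\mathrm{dist}(\cdot,\partial U)$ is false: the model $v=|y|^{2-n}$ already yields a complete metric with a fixed polynomial rate; the correct mechanism is to compare $v$ with the (bounded) equilibrium potential of a putative positive-capacity compact piece of $\partial U$ and contradict completeness along a curve on which $v$ stays bounded. Second, before injectivity is established the developed conformal factor is multi-valued over $U$, so this comparison must be carried out on the covering (or applied to the fiberwise minimum, which is still superharmonic); and a point $q\in\s^n\setminus\overline{\Phi(M^n)}$ need not exist even when $\partial\Phi(M^n)\neq\emptyset$, since the image can be dense, so the stereographic normalization has to be chosen differently.
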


%-------------------------------------------------------------------

\subsection{Embeddedness}\label{Sect:embedded}

An important issue in the theory of hypersurfaces is to know when an immersed hypersurface is in fact embedded. To use the convexity to gain embeddedness is a classic idea traced 
back to Hadamard \cite{Had, St}. Here we will combine the ideas from \cite{Cu, EGR} and the connection between normal flows and geodesic defining functions observed in \cite{BEQ}
to obtain some embedding theorems, based on the embedding theorem in \cite{do-warner}. First we state some extremal cases where the hypersurfaces $\phi_t$  in \eqref{phi-t} are embedded from known results in \cite{Al, Cu}.

\begin{proposition}\label{Lem:embedded}
Let $\hat g = e^{2\rho}g_{\ts^n}$ be a realizable metric on a domain $\Omega\subset\s^n$ . 

\begin{itemize}
\item[a)] If the Schouten tensor of the conformal metric $g$ is nonnegative, then $\Omega $ is either $\s^n$ or $\s^n \setminus \set{point}$.  In the first case the corresponding
hypersurface $\phi_t$ given in \eqref{phi-t} is an embedded ovaloid when $t$ is large . In the later case the corresponding hypersurface  $\phi_t$ is a horosphere with the inward orientation for  each $t$. 

\item[b)] On the other hand, if the Schouten tensor is nonpositive, then $\Omega$ is  homeomorphic to $\r^n$ and the corresponding horospherically convex hypersurface $\phi_t$ given in \eqref{phi-t} is properly embedded for all $t$. 
\end{itemize}
\end{proposition}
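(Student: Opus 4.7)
The plan is to convert the Schouten-tensor sign hypotheses into pointwise conditions on the principal curvatures via~\eqref{lambdakappa} and then apply classification and embedding results. A direct computation shows that $\lambda_i\ge 0$ is equivalent to $\kappa_i\le -1$ for every $i$, while $\lambda_i\le 0$ is equivalent to $-1\le\kappa_i<1$.

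For part (a), since $\kappa_i\le -1$ forces $\sum_i 2/(1-\kappa_i)\le n$ automatically, Theorem~\ref{inj} applies: the Gauss map is injective and $\partial_\infty\phi=\partial\Omega$. Nonnegativity of $\mathrm{Sch}_{\hat g}$ further yields nonnegative scalar curvature, so a classification of complete conformal metrics on subdomains of $\s^n$ with nonnegative Schouten tensor --- a sharpening of Schoen-Yau~\cite{SY} available under the stronger Schouten-positivity hypothesis, in the spirit of \cite{CHY} --- forces $\partial\Omega$ to be either empty or a single point. When $\Omega=\s^n$ the horospherical metric lives on the compact sphere, the hypersurface itself is compact, and Proposition~\ref{ovaloid} directly delivers the embedded ovaloid for $t$ large. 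When $\Omega=\s^n\setminus\{p\}$, I would argue that $\phi_t$ must be a horosphere via a Liouville rigidity: a complete conformal metric on $\s^n\setminus\{p\}$ with bounded curvature and nonnegative Schouten should be isometric to the flat Euclidean metric (stereographically project to $\R^n$ and classify using the blow-up estimate of Chang-Hang-Yang together with Schoen-Yau-type injectivity), forcing $\lambda_i\equiv 0$, hence $\kappa_i\equiv -1$, hence $\phi_t$ a horosphere for every $t$ by~\eqref{kappa-t}.

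For part (b), the bound $-1\le\kappa_i<1$ persists along the normal flow: a short computation from~\eqref{kappa-t} yields
$$\kappa_i^t+1=\frac{(1+\kappa_i)(1-\tanh t)}{1-\kappa_i\tanh t}\ge 0 \quad\text{and}\quad 1-\kappa_i^t=\frac{(1-\kappa_i)(1+\tanh t)}{1-\kappa_i\tanh t}>0$$
for every $t\in\R$. Each leaf $\phi_t$ is therefore a complete, properly immersed hypersurface in $\h^{n+1}$ with principal curvatures in $[-1,1)$, which is precisely the pinching hypothesis of Currier's embedding theorem~\cite{Cu}: $\phi_t$ is properly embedded and its parameter space is diffeomorphic to $\R^n$. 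Since $\phi_t$ is a graph over $\Omega$ with identity Gauss map, $\Omega$ itself is diffeomorphic to $\R^n$.

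The main obstacle is the horosphere rigidity in the $\Omega=\s^n\setminus\{p\}$ case of part~(a): asserting that the only complete conformal metric on the punctured sphere with nonneg Schouten and bounded curvature is the flat one requires a nontrivial Liouville/blow-up classification. An alternative I would attempt in parallel is a direct Alexandrov tangency argument, using the one-parameter family of horospheres centered at $p$: the condition $\kappa_i\le -1$ matches the horosphere curvature exactly, so any tangency forces coincidence by the maximum principle. Every other step reduces to results already assembled in Sections~\ref{Sect: local}--\ref{Sect: global} together with the classical theorems of Currier and Schoen-Yau.
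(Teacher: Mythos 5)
Your reduction of the Schouten-sign hypotheses to the pinchings $\kappa_i\le-1$ and $-1\le\kappa_i<1$ via \eqref{lambdakappa} is correct, and part (b) is right in substance but is attached to the wrong theorem: the pinching $\kappa_i\in[-1,1)$ is \emph{not} the hypothesis of Currier's theorem. Currier \cite{Cu} treats hypersurfaces infinitesimally supported by horospheres, i.e.\ all principal curvatures $\le -1$ in the orientation used here (equivalently $\ge 1$ in the opposite one), and concludes ``embedded sphere or horosphere.'' The result you need for (b) --- principal curvatures between $-1$ and $1$ imply the hypersurface is properly embedded and homeomorphic to $\R^n$ --- is Alexander's theorem \cite{Al}, which is what the paper invokes there.

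The genuine gap is in part (a). You never establish either that $\Omega$ must be $\s^n$ or $\s^n\setminus\set{p}$, or that in the punctured case $\phi_t$ is a horosphere; both are deferred to an unproven ``classification of complete conformal metrics with nonnegative Schouten tensor'' and a ``Liouville rigidity'' that you yourself flag as the main obstacle. Schoen--Yau only gives that $\s^n\setminus\Omega$ has zero Newtonian capacity, which is far from ``at most one point,'' and the tangency argument you sketch presupposes control you have not yet obtained. The missing ingredient is exactly Currier's theorem applied where it actually fits: for $t$ large, Theorem \ref{uniform-bounded} makes $\phi_t$ a complete, properly immersed, uniformly horospherically convex hypersurface with all principal curvatures $\le -1$, and \cite{Cu} then says it is either an embedded ovaloid or a horosphere. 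That single application simultaneously yields the dichotomy for $\Omega$ (the Gauss image of an ovaloid is all of $\s^n$, that of a horosphere is $\s^n$ minus a point), the embeddedness in the first case, and the horosphere rigidity in the second; the ``for each $t$'' refinement then follows from \eqref{kappa-t} as you note. In short, you hold both classical inputs but have swapped them; restoring \cite{Cu} to part (a) and \cite{Al} to part (b) closes the gap and recovers the paper's proof.
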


\begin{proof}

a) By Theorem \ref{uniform-bounded}, we know that $\phi_t$ as in \eqref{phi-t} is a properly immersed, complete, uniformly horospherically convex hypersurface when $t$ is large enough. Moreover, 
the nonnegativity of the Schouten tensor implies that all principal curvatures of the surface $\phi_t$ are less than or equal to $-1$. 
Thus, from \cite{Cu},  the surface is either an embedded $n$-sphere or a horosphere.  \\

b) As in the above, we construct properly immersed, complete, uniformly horospherically convex hypersurfaces $\phi_t$ via Theorem \ref{uniform-bounded}. 
This time the nonpositivity of the Schouten tensor of a realizable metric implies that all principal curvatures of the surface $\phi_t$ are between $-1$ and $1$. Thus, from \cite{Al}, the surface is properly embedded and homeomorphic to $\r^n$.  
\end{proof}

In general, one simply cannot expect all admissible hypersurfaces are embedded, as it was pointed out in \cite{EGM}, even a horospherical ovaloid may not be embedded. But what we can hope is that every admissible hypersurface can be unfolded along the normal flow into an embedded one, which is shown to be the case for a horospherical ovaloid in Corollary \ref{cpt-embedding}. We recall that the geodesic defining function $r$ and its level surfaces give rise to both the normal flow in the hyperbolic metric $\gH$ (called parallel flows in \cite{Eps3}) and the normal flow in the compactified metric $r^2\gH$. It is worth mentioning that the geodesic defining function is not well defined when the surfaces are no longer embedded, while the normal flow of immersed surfaces is still well defined. Therefore, the embeddedness of a hypersurface is equivalent to the existence of a geodesic defining function, which is equivalent to solving the noncharacteristic first order partial differential equation \eqref{normal-flow}. Interestingly, solving \eqref{normal-flow} by the  characteristic method is equivalent to solving the normal flow in the compactified metric. It then becomes a standard geometric question of how far one can push a totally geodesic hypersurface along the normal flow in a Riemannian manifold without any focal points, to which the standard Riemannian comparison theorems apply (cf. \cite{Cheeger-Ebin}). This reconfirms that given a conformal metric with Schouten tensor bounded from above, the hypersurface $\phi_t$ given by \eqref{phi-t} is an immersion when $t$ is large enough (cf. \cite{EGM}).
 
\begin{theorem}\label{Th:Immersion}
Suppose that $\hat g= e^{2\rho}g_{\ts^n}$ is a conformal metric on $\Omega\subset \s^n$ with Schouten tensor bounded from above. Then $\phi_t$ given in \eqref{phi-t} is an immersion for $t$ large enough.
\end{theorem}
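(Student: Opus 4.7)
The plan is to reduce the statement directly to Theorem \ref{Th:representacion}. Observe that $\phi_t$ is precisely the hypersurface produced by formula \eqref{repfor} when $\rho$ is replaced by $\rho+t$, i.e.\ when one applies \eqref{repfor} to the constantly rescaled conformal metric
\[
\hat g_t := e^{2(\rho+t)}g_{\ts^n} = e^{2t}\hat g.
\]
Thus it suffices to show that the converse half of Theorem \ref{Th:representacion} applies to $\hat g_t$ once $t$ is large enough, which amounts to verifying that all eigenvalues of the Schouten tensor of $\hat g_t$ are less than $1/2$.

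The key computation is to track how the Schouten endomorphism transforms under a constant conformal rescaling. Using the conformal change formula
\[
\mathrm{Sch}_{e^{2f}g} = \mathrm{Sch}_g - \nabla^2 f + df\otimes df - \tfrac{1}{2}|\nabla f|^2_g\, g
\]
with $f\equiv t$ a constant, all three correction terms vanish, so $\mathrm{Sch}_{\hat g_t}=\mathrm{Sch}_{\hat g}$ as a $(0,2)$-tensor. Raising an index with $\hat g_t^{-1}=e^{-2t}\hat g^{-1}$, the eigenvalues of the Schouten endomorphism satisfy
\[
\lambda_i(\mathrm{Sch}_{\hat g_t}) = e^{-2t}\,\lambda_i(\mathrm{Sch}_{\hat g}).
\]

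Now the hypothesis says that the eigenvalues of $\mathrm{Sch}_{\hat g}$ are bounded from above by some finite constant $C$ on $\Omega$. Choose $t_0$ so that $e^{-2t_0}C<\tfrac12$; then for every $t\geq t_0$ the eigenvalues of $\mathrm{Sch}_{\hat g_t}$ are uniformly bounded by $e^{-2t}C<\tfrac12$ on $\Omega$. Theorem \ref{Th:representacion} then guarantees that the map $\phi_t$ given by \eqref{phi-t}, applied to $\hat g_t$, is an immersed horospherically convex hypersurface whose Gauss map is the identity and whose horospherical metric is $\hat g_t$, which is exactly the desired conclusion. In particular, $\lambda_i(\mathrm{Sch}_{\hat g_t})<1/2$ translates under \eqref{lambdakappa} into $\kappa_i^t<1$, so $d\phi_t$ has full rank by Lemma \ref{hc}.

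There is no substantive obstacle: the statement is essentially a bookkeeping consequence of Theorem \ref{Th:representacion} together with the trivial behavior of the Schouten tensor under constant rescaling, as already remarked at the end of Section \ref{Sect:Flow}. The only point requiring care is checking that the constant rescaling identity $\mathrm{Sch}_{\hat g_t}=\mathrm{Sch}_{\hat g}$ (as a $(0,2)$-tensor) is compatible with the parameter shift $\rho\mapsto \rho+t$ appearing inside the representation formula \eqref{repfor}, which it is by inspection of \eqref{phi-t}.
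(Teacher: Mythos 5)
Your proof is correct, but it takes a genuinely different route from the one the paper gives for this theorem. You reduce the statement to the converse half of Theorem \ref{Th:representacion}: since $\phi_t$ is exactly formula \eqref{repfor} applied to $\rho+t$, and since a constant conformal rescaling leaves $\mathrm{Sch}$ unchanged as a $(0,2)$-tensor while scaling the eigenvalues of the endomorphism $\hat g_t^{-1}\mathrm{Sch}_{\hat g_t}$ by $e^{-2t}$, the upper bound on the Schouten eigenvalues drops below $1/2$ once $t$ is large, and \cite{EGM} does the rest. This is sound, and in fact it is precisely the observation the authors themselves make at the end of Section \ref{Sect:Flow} (and acknowledge in the sentence immediately preceding the theorem, where they say the argument ``reconfirms'' a fact already known from \cite{EGM}). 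The paper's own proof deliberately avoids this shortcut: it works in the compactified metric $\bar g = r^2 g_{\mathbb{H}^{n+1}}$, computes the sectional curvatures $\bar R_{irir} = \lambda_i - \tfrac12 r^2\lambda_i^2$ in the planes containing the normal direction via the Fefferman--Graham expansion of Lemma \ref{expansion}, and invokes Berger's comparison theorem to show the normal flow from the totally geodesic boundary has no focal points for a distance $\epsilon_0$ that is \emph{uniform} over $\Omega$. What that buys is more than immersedness of each leaf: it establishes the existence of the geodesic defining function out to a uniform collar, which is the viewpoint the paper needs for its subsequent embeddedness arguments (Lemma \ref{cpt-part} and Theorem \ref{Th:Correspondence}). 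Your argument is shorter and entirely adequate for the stated conclusion; it simply does not produce the uniform focal-point-free collar as a byproduct. One tiny point of care in your write-up: you should take $C>0$ (or note the bound is vacuous otherwise) before choosing $t_0$ with $e^{-2t_0}C<\tfrac12$, and you correctly use the strict inequality that Theorem \ref{Th:representacion} requires.
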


\begin{proof} Given a point $x\in \Omega$, there exists an open neighborhood $U$ of $x$ inside $\Omega$ on which the geodesic defining function associated with the given metric 
$\hat g=e^{2\rho}g_{\ts^n}$ on $\Omega$ is defined and reaches out for a positive number $\epsilon$.
The key point is to show that $\epsilon \geq \epsilon_0$ for some positive number $\epsilon_0$ which is independent of where $x$ is in $\Omega$. In the compactified metric
$r^2\gH$ the infinity $U$ becomes a totally geodesic boundary and how far the geodesic defining function can be defined is determined by how far the boundary can be pushed in along the normal flow without encountering any focal points. In other words, in the metric $r^2\gH$, we would like to know how far the geodesics from points in $U$ in the directions normal to the boundary $U$ can be extended without collisions locally. \\

Let us calculate the sectional curvatures for the metric $r^2\gH$ in the plane containing the direction normal to $U$. To do so, we set a normal coordinate $x$ with respect to the metric 
$\hat g$ at a point $x_0$ in $U$. We may assume that the Schouten tensor of the conformal metric $\hat g$ is in diagonal form under the chosen coordinates at $x_0$. Hence, in 
the coordinate $(r, x)$ for $\bar g = r^2\gH$,
$$
\bar R_{irir}  = \frac 12(-\partial_r\partial_r \bar g_{ii} - \partial_i\partial_i \bar g_{rr} + \partial_i\partial_r \bar g_{ ir} + \partial_r\partial_i\bar g_{ir})  - \bar g^{\alpha\beta}
(\overline{[ii, \alpha]} \, \overline{[rr, \beta]} - \overline{[ir, \alpha]} \,  \overline{[ir, \beta]}),
$$
where the Christoffel symbols of second kind are given by
$$
\overline{[\alpha\beta, \gamma] }= \frac 12(\partial_\beta \bar g_{\alpha\gamma} + \partial_\alpha\bar g_{\beta\gamma} - \partial_\gamma\bar g_{\alpha\beta})
$$
What is good here is that we only need to take second order derivatives for $\bar g_{rr}$ with respect to $x$ variables. It is helpful at this point to recall from Lemma \ref{expansion} that
$$
\bar g = r^2\gH = dr^2 + \hat g - r^2 Sch_{\hat g} + \frac{r^4}{4}Q_{\hat g}. 
$$
Hence,
\begin{equation}\label{section-curvature}
\aligned
\bar R_{irir}  & = -\frac 12 \partial_r\partial_r \bar g_{ii} + \bar g^{ii}[ir, i][ir,i]\\
& = \lambda_i  - \frac 32 r^2 \lambda_i^2 + (1 - r^2 \lambda_i + \frac 14 r^4 \lambda_i^2)^{-1}( r\lambda_i - \frac 12 r^3\lambda_i^2)^2 \\
& = \lambda_i  - \frac 12 r^2 \lambda_i^2.
\endaligned
\end{equation}
Now one may apply the second Rauch comparison theorem of Berger, Theorem 1.29 on page 30 of the book \cite{Cheeger-Ebin} to conclude that, for any given neighborhood $V$ of $x_0$ 
such that $\bar V\subset U$, there is a positive number $\epsilon_0$ such that the geodesic defining function reaches beyond $\epsilon_0$ from any point in $V$, since $Sch_{\hat g}$ is assumed 
to bounded from above. 
\end{proof}

Consequently, even though a horospherical ovaloid may not be embedded (cf. \cite{EGM}), it can be expanded along the normal flow into an embedded one.

\begin{corollary}\label{cpt-embedding} Suppose that $\phi: {\rm M}^n\to\h^{n+1}, n \geq 2,$ is a connected, compact, immersed, horospherically convex hypersurface. Then the leaves $\phi_t$ in \eqref{parallel-flow} in the normal flow are embedded spheres when $t$ is large enough.
\end{corollary}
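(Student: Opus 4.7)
The plan is to reduce the corollary to the embedding theorem (Theorem \ref{embed}) by showing that the hypersurface $\phi$ is admissible. Since $\phi$ is compact and horospherically convex, Lemma \ref{hc} ensures that $G: {\rm M}^n \to \s^n$ is a local diffeomorphism; by compactness and continuity of the principal curvatures, $\kappa_i \leq \kappa_0 < 1$ uniformly, so $\phi$ is uniformly horospherically convex and the horospherical metric $g_h$ is complete on ${\rm M}^n$. The image $G({\rm M}^n)$ is then both open (as $G$ is a local diffeomorphism) and closed (as the continuous image of a compact set), hence equal to $\s^n$ by connectedness. By Lemma \ref{covering}, $G$ is a covering $M^n \to \s^n$; since $n \geq 2$ the sphere $\s^n$ is simply connected, so $G$ is in fact a global diffeomorphism.

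Now $\phi$ is admissible, and the boundary at infinity $\partial_\infty \phi({\rm M}^n)$ is empty because $\phi({\rm M}^n)$ is compact inside $\h^{n+1}$. This trivially satisfies the topological condition in Theorem \ref{embed} (vacuous as a disjoint union of smooth compact submanifolds with no boundary). Applying Theorem \ref{embed}, I conclude that the normal flow $\phi_t$ given by \eqref{phi-t} (equivalently \eqref{parallel-flow}) is embedded for all sufficiently large $t$. Because $\phi_t$ is parametrized by the compact manifold ${\rm M}^n$, which has just been shown to be diffeomorphic to $\s^n$ via $G$, its image is an embedded $n$-sphere.

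There is no serious obstacle here: the content of the corollary is the verification of the hypotheses of Theorem \ref{embed} in the compact setting. The only nontrivial ingredient is the injectivity of $G$, which rests on simple connectedness of $\s^n$ for $n \geq 2$ combined with the Kulkarni--Pinkall result (Lemma \ref{covering}); this ingredient fails in dimension $n=1$, which explains the restriction $n \geq 2$ in the statement. An alternative route, should one want to avoid Theorem \ref{embed} entirely, would be to pass to the Poincar\'{e} ball model via stereographic projection $\tau$ as in the proof of Lemma \ref{proper-complete}, noting that $\tau\circ\phi_t(x) \to x$ uniformly in $C^1$ on $\s^n$ as $t \to \infty$ (with uniformity provided by compactness of $\s^n$ and smoothness of $\rho$), and then invoking openness of embeddings in the $C^1$ topology on $C^\infty(\s^n,\overline{\b^{n+1}})$ to deduce embeddedness of $\phi_t$ for $t$ large.
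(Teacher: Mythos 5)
Your main route is correct and is in substance the paper's own argument: the paper derives this corollary as the compact instance of Theorem \ref{Th:Immersion} (uniform existence of the geodesic defining function, hence embeddedness of its level surfaces, which are the leaves $\phi_t$), and your appeal to Theorem \ref{embed} with $\partial\Omega=\emptyset$ collapses to exactly that, since the proof of Theorem \ref{Th:Correspondence} with no ends reduces to Lemma \ref{cpt-part} applied to $K=\Omega=\s^n$. Your verification of the hypotheses (uniform convexity by compactness, $G(\rm{M}^n)=\s^n$ by open--closed, injectivity via Lemma \ref{covering} and simple connectedness for $n\geq 2$, empty boundary at infinity) is exactly what the paper leaves implicit, and your observation that $n\geq 2$ enters only through $\pi_1(\s^n)=0$ matches the paper's own counterexample of the non-embedded convex curve in $\h^2$. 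The only caution is ordering: in the paper the corollary precedes Theorem \ref{Th:Correspondence}, so quoting Theorem \ref{embed} is a forward reference; it is not circular (that proof rests on Lemma \ref{cpt-part}, Theorem \ref{do-warner} and the linking argument, not on this corollary), but the cleaner citation is Lemma \ref{cpt-part} or Theorem \ref{Th:Immersion} directly. Your alternative route --- uniform $C^1$ convergence of $\tau\circ\phi_t$ to the identity embedding of $\s^n$ into $\overline{\b^{n+1}}$ together with $C^1$-openness of embeddings of compact manifolds --- is a genuinely different and more elementary argument that bypasses the Rauch comparison entirely; it buys simplicity in the compact case but, unlike the paper's mechanism, does not generalize to the noncompact ends treated later. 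A third route the paper hints at but reserves for the noncompact case would be to note that $\kappa_i^t\to -1$ uniformly by \eqref{kappa-t} and compactness and then invoke Theorem \ref{do-warner} directly.
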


This should be compared with the Hadamard type theorem established by Do Carmo and Warner in Section 5 in \cite{do-warner} (cf. \cite{Had, St}). For the convenience of readers we state their result as follows:

\begin{theorem}[\cite{do-warner}]\label{do-warner} Suppose that $\phi: {\rm M}^n\to\h^{n+1}, n \geq 2,$ is a connected, compact, immersed hypersurface with all principal curvature nonnegative. 
Then $\phi$ is an embedded ovaloid.
\end{theorem}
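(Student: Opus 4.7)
The plan is to adapt the classical Hadamard embedding theorem to hyperbolic geometry, exploiting that $\h^{n+1}$ is a Cartan--Hadamard manifold. I would split the proof into producing a strictly convex point on $M^n$ and then invoking a global embedding principle for locally convex immersed hypersurfaces in nonpositively curved ambient space.

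First, I would show that at some $p_0 \in {\rm M}^n$ all principal curvatures are not merely nonnegative but strictly greater than $1$. Fix a basepoint $x_0 \in \h^{n+1}$ and consider the function $f(p) = \cosh d(x_0, \phi(p))$ on the compact ${\rm M}^n$. At a maximum $p_0$, the tangent plane to $\phi$ is perpendicular to the radial geodesic from $x_0$ and $\operatorname{Hess} f \leq 0$. Expanding via the Gauss formula for the immersion and using that the ambient Hessian $\operatorname{Hess}_{\h^{n+1}}(\cosh d(x_0, \cdot))$ equals $\cosh d(x_0, \cdot)\, \gH$, one obtains the pointwise inequality $\kappa_i(p_0) \geq \coth d(x_0, \phi(p_0)) > 1$ for every principal direction (with the orientation pointing toward $x_0$). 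Thus $\phi$ is strictly convex at $p_0$.

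Next I would invoke a Van~Heijenoort/Sacksteder-type principle adapted to hyperbolic space: a compact, connected, locally convex ($\kappa_i \geq 0$) immersed hypersurface with at least one strictly convex point in a simply-connected manifold of nonpositive sectional curvature must be embedded and bound a compact convex region. To prove this within the spirit of the paper, I would form the closed convex hull $C = \operatorname{conv}(\phi({\rm M}^n))$ in $\h^{n+1}$, which is a well-defined convex body since $\h^{n+1}$ is CAT(0). Local convexity of $\phi$ forces $\phi({\rm M}^n) \subseteq \partial C$, and using the strictly convex point $p_0$ together with a degree-of-map/covering argument (permissible since $n \geq 2$, so $\partial C \cong \s^n$ is simply connected) one concludes that $\phi$ is a bijection onto $\partial C$, hence an embedded ovaloid.

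The main obstacle is promoting local convexity to a global embedding. The strict convexity at $p_0$ provides an initial open set on which $\phi$ is a homeomorphism onto its image in $\partial C$; nonpositivity of the ambient sectional curvature of $\h^{n+1}$ is then used to prevent geodesic refocusing and hence to prevent the locally convex immersion from folding back on itself. Making this rigorous is the technical core of the Do Carmo--Warner argument, where careful covering-space monodromy or monotonicity of radial projection from an interior point of $C$ must be deployed. Once this global control is established, the remaining assertions (embeddedness and the ovaloid property) follow immediately from Step 1 and the convex-hull description.
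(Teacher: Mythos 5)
You should first note that the paper itself does not prove this statement: it is quoted verbatim as a result of Do Carmo and Warner (Section 5 of \cite{do-warner}), ``for the convenience of readers,'' so there is no internal proof to compare yours against. On its own terms, your Step 1 is correct and standard: maximizing $\cosh d(x_0,\phi(\cdot))$ on the compact ${\rm M}^n$, using $\operatorname{Hess}_{\h^{n+1}}\bigl(\cosh d(x_0,\cdot)\bigr)=\cosh d(x_0,\cdot)\,\gH$ and the Gauss formula, does produce a point where every principal curvature is at least $\coth d>1$ in the inward orientation, hence a strictly convex point.

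The gap is in Step 2. The assertion that ``local convexity of $\phi$ forces $\phi({\rm M}^n)\subseteq\partial C$'' is not an available lemma on the way to the theorem --- it is essentially the theorem itself. For $n=1$ the implication is false (a limaçon-type locally convex closed curve, such as the non-embedded convex curve $\alpha$ exhibited in Section 3.2 of this paper, has image meeting the open convex hull), and for $n\geq 2$ its validity is exactly the content of the van Heijenoort/Sacksteder/Do Carmo--Warner circle of results you are trying to reprove. You acknowledge this by deferring ``the technical core'' to covering-space monodromy or monotone radial projection, but the proposal never executes that step, and in particular never shows where $n\geq 2$ and the single strictly convex point combine to exclude the one-dimensional counterexamples; the subsequent degree argument on $\partial C\cong\s^n$ therefore has nothing to stand on. For the record, the actual Do Carmo--Warner route sidesteps convex hulls in $\h^{n+1}$ entirely: one passes to the Klein (projective/Beltrami) model, under which hyperbolic geodesics become Euclidean segments, so that nonnegativity of the second fundamental form transfers to local convexity of a compact immersed hypersurface of $\R^{n+1}$, and one then invokes the Euclidean Hadamard-type embedding theorem for compact locally convex immersed hypersurfaces of dimension $n\geq 2$. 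To make your argument complete you must either cite that Euclidean theorem explicitly or genuinely carry out the monodromy/radial-projection argument; as written, the hard step is assumed rather than proved.
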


It turns out that Theorem \ref{do-warner} is one of the important key ingredients in our approach to establish the embeddedness of leaves in the normal flow from a noncompact 
admissible hypersurface. Another key ingredient is also a consequence of Theorem \ref{Th:Immersion}.

\begin{lemma}\label{cpt-part} Suppose that $\phi: \Omega\to\h^{n+1}, n\geq 2,$ is an immersed, horospherically convex hypersurface with Gauss map $G(x) = x: \Omega\to\s^n$. Then, for any
compact subset $K\subset\Omega$, the hypersurfaces $\phi_t: K\to\h^{n+1}$ given in \eqref{parallel-flow} are embedded when $t$ is sufficiently large.
\end{lemma}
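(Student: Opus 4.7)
The plan is to exploit the correspondence, recalled in Section \ref{Sect:Flow} from \cite{BEQ}, between the hyperbolic normal flow $\phi_t$ and the level sets of a geodesic defining function $r$ in the conformally compactified metric $\bar g := r^2 g_{\h^{n+1}}$. Under the change of variable $r = 2e^{-t}$, the hypersurface $\phi_t$ is identified with the level set $\{r = 2e^{-t}\}$. This reduces the claim to a standard tubular neighborhood statement for the smooth hypersurface $\Omega\subset \s^n = \partial\overline{\b^{n+1}}$ sitting in the compactified space.

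First, I would observe from the Fefferman--Graham expansion in Lemma \ref{expansion}, namely $g_r = g - r^2\,\mathrm{Sch}_g + \tfrac{r^4}{4}Q_g$, that the boundary $\Omega\times\{0\}$ is smoothly embedded in the compactified space and is totally geodesic with respect to $\bar g = dr^2 + g_r$, since $\partial_r g_r |_{r=0} = 0$. In particular, $\bar g$ extends smoothly to a one-sided neighborhood of $\Omega\times\{0\}$, and for small $s > 0$ the slices $\{r = s\}$ lying over $\Omega$ are precisely the $\bar g$-normal translates of $\Omega$.

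Next, because $K\subset\Omega$ is compact, the standard tubular neighborhood theorem applied to the smooth embedded hypersurface $\Omega\subset \overline{\b^{n+1}}$ in the $\bar g$ geometry produces an $\epsilon_0 > 0$ such that the normal exponential map
\[
\Phi: K\times [0,\epsilon_0)\to \overline{\b^{n+1}}, \qquad (x,s)\mapsto \exp^{\bar g}_x(s\,\partial_r),
\]
is a smooth embedding. The existence of a uniform $\epsilon_0$ on $K$ is quantitatively consistent with the sectional curvature bound \eqref{section-curvature} used in the proof of Theorem \ref{Th:Immersion}, where the second Rauch comparison theorem supplied a uniform focal distance from compact subsets.

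Finally, under the identification $s = 2e^{-t}$, the slice $\Phi(K\times\{s\})$ coincides with $\phi_t(K)$, since both the hyperbolic normal flow $\phi_t$ and the $\bar g$-normal flow from $\Omega$ traverse the same unparametrized geodesic curves transverse to the level sets of $r$. Therefore, once $t$ is large enough that $2e^{-t} < \epsilon_0$, the map $\phi_t|_K$ is injective; combined with the immersion property of $\phi_t$ on $\Omega$ from Theorem \ref{Th:Immersion} and the compactness of $K$, this gives that $\phi_t|_K$ is an embedding. The main obstacle in carrying out the plan is verifying precisely that the two normal flows produce the same level sets $\{r = 2e^{-t}\}$, together with the smoothness of the compactified geometry at $\Omega\times\{0\}$; both points are already handled in Section \ref{Sect:Flow} through the explicit reparametrization $r = 2e^{-t}$ and the Fefferman--Graham expansion.
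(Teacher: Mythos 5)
Your argument is correct and follows essentially the same route the paper intends: the paper offers no separate proof of Lemma \ref{cpt-part}, asserting it as a consequence of Theorem \ref{Th:Immersion}, whose proof is precisely the uniform existence over compact subsets of the geodesic defining function collar via the Fefferman--Graham expansion and the comparison/tubular-neighborhood argument you describe. The one point to phrase carefully is that $\bar g = r^2\gH$ is not a priori a metric on a neighborhood of $K$ in $\overline{\b^{n+1}}$ (having $r$ well defined there would presuppose the embeddedness being proved), so the tubular-neighborhood step should be run on the abstract collar $\Omega\times[0,\epsilon)$ equipped with $dr^2+g_r$ from Lemma \ref{expansion}, concluding injectivity of the flow map on $K\times[0,\epsilon_0)$ from the facts that it is a local diffeomorphism up to $r=0$ and restricts to the identity on $K\times\{0\}$.
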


In order to apply Theorem \ref{do-warner} we use the following: 

\begin{lemma}\label{strong-convex} Suppose that $\phi: {\rm M}^n\to\h^{n+1}$ is an admissible hypersurface. Then 
\begin{equation}\label{eq:strong-convex}
\kappa_i^t = \frac {\kappa_i - \tanh t}{1 -\kappa_i\tanh t}  = -\coth t + \frac {\coth^2 t - 1}{\coth t - \kappa_i} \To -1 \text{ as $t\to\infty$},
\end{equation}
where $\kappa_i^t$ are the principal curvatures for the hypersurface $\phi_t$ given by \eqref{phi-t}.
\end{lemma}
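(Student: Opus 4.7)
The first equality is the evolution equation for principal curvatures under the normal flow, already recorded as \eqref{kappa-t} from the Ricatti equations; it is valid at every point since the uniform horospherical convexity $\kappa_i \leq \kappa_0 < 1$ guarantees that the denominator $1-\kappa_i\tanh t$ never vanishes for $t\geq 0$ (and in fact is bounded away from $0$ uniformly in $i$ and in $p\in {\rm M}^n$). So the first step is simply to invoke \eqref{kappa-t}, with $\kappa_i^t$ the principal curvatures of $\phi_t$ obtained from \eqref{parallel-flow}.

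Second, I would verify the second equality by a direct algebraic manipulation. Write
$$
-\coth t + \frac{\coth^2 t - 1}{\coth t - \kappa_i} = \frac{-\coth t(\coth t - \kappa_i) + \coth^2 t - 1}{\coth t - \kappa_i} = \frac{\kappa_i\coth t - 1}{\coth t - \kappa_i},
$$
and then multiply numerator and denominator by $\tanh t$ to recover $(\kappa_i - \tanh t)/(1 - \kappa_i\tanh t)$. This rewriting is useful because it isolates the "leading term" $-\coth t$ from a "remainder" that is easy to estimate.

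Third, I would pass to the limit. Since $\phi$ is admissible, Definition \ref{uniform-cv} supplies a constant $\kappa_0 < 1$ with $\kappa_i(p) \leq \kappa_0$ for every $i$ and every $p\in {\rm M}^n$. For all $t$ large enough, $\coth t - \kappa_i \geq \coth t - \kappa_0 \geq 1 - \kappa_0 > 0$, so
$$
0 \leq \frac{\coth^2 t - 1}{\coth t - \kappa_i} \leq \frac{\coth^2 t - 1}{1 - \kappa_0} \longrightarrow 0 \quad\text{as }t\to\infty,
$$
and since $-\coth t \to -1$, I conclude that $\kappa_i^t \to -1$. The bound is independent of $i$ and of $p\in {\rm M}^n$, so the convergence is uniform.

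There is no real obstacle here: once one has the rewritten form, the only ingredient needed is a uniform positive lower bound for $\coth t - \kappa_i$, and this is precisely what the hypothesis of uniform horospherical convexity provides. Note that no lower bound on $\kappa_i$ is required, since $\kappa_i$ appearing only in the denominator of a vanishing quantity with positive sign makes the remainder go to zero regardless of how negative $\kappa_i$ may become. The point of stating the lemma in this form is exactly the uniformity, which is what is needed later to push $\phi_t$ into the regime where the Hadamard-type theorem of \cite{do-warner} can be applied leafwise.
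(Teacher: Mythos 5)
Your proof is correct and follows the same line as the paper's, which simply observes that the uniform bound $\kappa_i\le\kappa_0<1$ from uniform horospherical convexity is what makes the limit work; you have merely written out the routine algebra and the uniform estimate that the paper leaves implicit. No issues.
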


\proof This is simply because the principal curvatures $\kappa_i \leq \kappa_0$ for some $\kappa_0< 1$ from the uniformly horospherical convexity. 
\endproof

\begin{theorem}\label{Th:Correspondence}
Suppose that $\phi: \Omega\to \h^{n+1}$ is an admissible hypersurface and that the hypersurfaces $\phi_t$ given by \eqref{parallel-flow} is the normal flow from $\phi$. In addition, we assume 
that the boundary $\partial\Omega$ at infinity is a disjoint union of smooth compact submanifolds with no boundary in $\s^n$. Then there is a number $t_0 > 0$ such that the 
hypersurfaces $\phi_t$ are embedded for all $t \geq t_0$. 
\end{theorem}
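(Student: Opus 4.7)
The plan is to argue by contradiction. If the conclusion fails, there exist sequences $t_k\to\infty$ and distinct $x_k,y_k\in\Omega$ with $\phi_{t_k}(x_k)=\phi_{t_k}(y_k)$. After passing to a subsequence, $x_k\to x_\infty$ and $y_k\to y_\infty$ in $\bar\Omega\subset\s^n$; the first step is to identify the common limit point.

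From the representation \eqref{phi-t} and admissibility, $\tau\circ\phi_{t_k}(x_k)\to x_\infty$ and $\tau\circ\phi_{t_k}(y_k)\to y_\infty$ in the Poincar\'e ball $\b^{n+1}$ (this uses $\rho(x_k)+t_k\to\infty$, which holds either because $t_k\to\infty$ with $\rho$ bounded on compact subsets of $\Omega$, or because $x_k\to\partial\Omega$ forces $\rho(x_k)\to\infty$ by completeness of $\hat g$). The collision $\phi_{t_k}(x_k)=\phi_{t_k}(y_k)$ therefore forces $x_\infty=y_\infty$. If this common limit lay in the interior $\Omega$, then for large $k$ both $x_k,y_k$ would sit in a fixed compact $K\subset\Omega$, and Lemma \ref{cpt-part} would yield embeddedness of $\phi_{t_k}$ on $K$, contradicting $x_k\ne y_k$. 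Hence $x_\infty=y_\infty\in\partial\Omega$, and since $\partial\Omega=\Sigma_1\sqcup\cdots\sqcup\Sigma_N$ is a disjoint union of smooth compact components, $x_\infty$ lies in a unique $\Sigma_j$.

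The heart of the argument is to rule out this remaining scenario. The plan is to combine the smoothness of $\Sigma_j$ near $x_\infty$ with Lemma \ref{strong-convex}, which drives the principal curvatures of $\phi_t$ uniformly to $-1$, and then apply the Hadamard--type Theorem \ref{do-warner}. Fix a tubular neighborhood of $\Sigma_j$ in $\s^n$ and a small open set $W\subset\Omega$ near $x_\infty$ containing both $x_k,y_k$ for $k$ large. Close $\phi_{t_k}(W)\subset\h^{n+1}$ off with a smooth convex cap $C_{t_k}$ built from a piece of a sufficiently large geodesic sphere in $\h^{n+1}$ --- whose principal curvatures can be tuned close to $-1$ --- so that $\phi_{t_k}(W)\cup C_{t_k}$ is a closed, connected, immersed hypersurface with all principal curvatures strictly negative in the canonical orientation (equivalently, strictly positive in the outward one). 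Theorem \ref{do-warner} would then force this closed hypersurface to be an embedded ovaloid, contradicting $\phi_{t_k}(x_k)=\phi_{t_k}(y_k)$. The main obstacle is constructing $C_{t_k}$ with the right smoothness, right convexity, and right side relative to $\phi_{t_k}(W)$, so that the capped object is genuinely immersed and Theorem \ref{do-warner} applies; smoothness of $\Sigma_j$ supplies the geometric control, while Lemma \ref{strong-convex} supplies the uniform convexity.
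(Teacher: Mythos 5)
Your reduction to a single boundary point is sound and coincides with the first step of the paper's proof: the formula for $\tau\circ\phi_t$ (cf.\ \eqref{continuity}) shows the ends separate near infinity, and Lemma \ref{cpt-part} disposes of collisions over compact subsets of $\Omega$. The genuine gap is the capping step. Since $x_k\to x_\infty\in\partial\Omega$, any open set $W\subset\Omega$ containing $x_k,y_k$ for all large $k$ must accumulate on $\partial\Omega$; by Corollary \ref{bdy-regularity} the image $\phi_{t_k}(W)$ then accumulates on $\partial\b^{n+1}$, i.e.\ it is an unbounded piece of hypersurface running out to the ideal boundary of $\h^{n+1}$. No compact convex cap $C_{t_k}$ can close such a piece into a compact hypersurface, so Theorem \ref{do-warner} --- a statement about \emph{compact} immersed hypersurfaces --- cannot be applied to $\phi_{t_k}(W)\cup C_{t_k}$. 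Even for a compact piece $W$ with $\bar W\subset\Omega$ (which would exclude $x_k$ for large $k$ anyway), gluing a piece of a geodesic sphere along the arbitrary boundary $\phi_{t_k}(\partial W)$ so that the union is immersed, $C^2$, and convex is not a routine construction: the boundary of a spherical cap is a round sphere in a totally geodesic hyperplane, which $\phi_{t_k}(\partial W)$ has no reason to be. The ``main obstacle'' you flag at the end is therefore the entire difficulty, and neither the smoothness of $\Sigma_j$ nor Lemma \ref{strong-convex} resolves it.

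The paper avoids gluing altogether by slicing. For a component ${\rm E}^k\subset\partial\Omega$ of dimension $k$ it takes a tubular neighborhood ${\rm E}^k\times B^{n-k}_{\lambda_0}$ and, for each $p\in{\rm E}^k$ and $\lambda<\lambda_0$, intersects $\phi_t(\Omega)$ with the totally geodesic subspace $D^{n-k}_\lambda(p)$ whose ideal boundary is the linking sphere $\{p\}\times\ts^{n-k-1}_\lambda$. Because that linking sphere misses $\partial\Omega$, each slice $I^t_{p,\lambda}$ is automatically a compact, boundaryless, convex immersed hypersurface of the lower--dimensional hyperbolic space $D^{n-k}_\lambda(p)$, to which Theorem \ref{do-warner} applies directly, with no capping needed; a first--touch argument as $\lambda$ decreases from $\lambda_0$, using $\kappa_i^t\to -1$ from Lemma \ref{strong-convex} to exclude tangencies with the ruled hypersurfaces $\bigcup_{p}D^{n-k}_\lambda(p)$, shows every slice is a single embedded ovaloid (or simple closed convex curve, or point), and embeddedness of the end follows. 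If you wish to keep your compactness--and--contradiction framing, you still need this slicing (or some genuine substitute for the cap) to close the argument.
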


\proof First, in the light of Lemma \ref{cpt-part}, one only needs to focus on each end. This is because, for the conformal metric $e^{2\rho}g_{\ts^n}$ corresponding 
to the given admissible hypersurface $\phi$, we know $\rho\to\infty$ when approaching $\partial\Omega$. Hence \eqref{continuity} holds, which implies the ends of hypersurface are well separated near the infinity.\\

Consider one of the connected components ${\rm E}^k\subset \partial\Omega$, which is a smooth compact submanifold with no boundary in $\s^n$. Let  $U$ be an open neighborhood of ${\rm E}^k$ in 
$\s^n$ whose closure $\bar U$ is a compact subset in $\s^n$ intersecting no other component of $\partial\Omega$. 
We denote the tubular neighborhood of $E^k$ inside $U$ with size $\lambda_0$ in $\s^n$ as 
$$
{\rm N}_{\lambda_0}({\rm E}^k) = {\rm E}^k\times B_{\lambda_0}^{n-k} \subset\s^n,
$$
where $B_\lambda^{n-k}$ is geodesic ball in $\s^{n-k}$ for $n-k\geq 1$ and $\lambda_0$ is some small positive number. Let $\ts^{n-k-1}_\lambda\subset B_{\lambda_0}^{n-k}$
denote the family of round spheres with $\lambda< \lambda_0$ and centered at the center of $B_{\lambda_0}^{n-k}$. Finally, for a point $p\in{\rm E}^k$, let $D^{n-k}_\lambda(p)\subset\h^{n+1}$ 
denote the totally geodesic hyperbolic subspace that has the boundary $\{p\}\times\ts^{n-k-1}_\lambda\subset{\rm N}_{\lambda_0}({\rm E}^k)$ at infinity. \\

From Lemma \ref{cpt-part} we know that there exists $t_0$ large enough so that the hypersurface 
$$
\phi_t: \Omega \bigcap (\bar U\setminus {\rm N}_{\frac 12\lambda_0}({\rm E}^k))\To\h^{n+1}
$$
is embedded and the hypersurface 
$$
\phi_t: \Omega\bigcap ({\rm N}_{\frac 12\lambda_0}({\rm E}^k)\setminus{\rm E}^k)\To\h^{n+1}
$$ 
lies inside $\bigcup_{p\in {\rm E}^k}\bigcup_{\lambda < \frac 23\lambda_0} D_{\lambda}^{n-k}(p)$ for each $t\geq t_0$.
Now let us consider the intersection $I_{p,\lambda}^t = D^{n-k}_\lambda(p)\cap\phi_t(\Omega)$ for each $p\in{\rm E}^k$ and $\lambda\leq \lambda_0$. First of all, one sees that each $I_{p, \lambda}^t$ is non-empty for $\lambda < \lambda_0$. This is a consequence of the fact that ${\rm E}^k$ is linked with each $\ts^{n-k-1}_\lambda$ in $\s^n$ when $\lambda$ is appropriately small, so ${\rm E}^k$ is still linked with $D^{n-k}_\lambda$ in the ball $\b^{n+1}$. It is then clear that 
$I_{p, \lambda}^t$ is a connected, embedded convex ovaloid ($n-k\geq 3$),  or a simple closed convex curve ($n-k = 2$), or a single point ($n-k = 1$), in a 
totally geodesic hyperbolic subspace $D^{n-k}_\lambda (p)$ when $\lambda \in (\frac 23\lambda_0, \lambda_0)$ and $t\geq t_0$. 
Another simple observation is the fact that each intersection $I_{p, \lambda}^t$ is compact, since the boundary at infinity $\ts^{n-k-1}$ of $D^{n-k}_\lambda(p)$ does not intersect with the 
boundary at infinity  $\partial\Omega$ of the hypersurface $\phi_t$. Our theorem is true if, for any given $t\geq t_0$, we are able to show that each $I_{p, \lambda}^t$ is a connected, 
embedded, convex ovaloid ($n-k\geq 3$),  or a simple closed convex curve ($n-k =2$), or a single point ($n-k =1$)  for all $p\in {\rm E}^k$, $\lambda < \lambda_0$. \\

Let us first establish the cases $k=0$, i.e. ${\rm E}^k$ is a point $p\in \s^n$. We observe that the intersection of a complete, strictly convex, immersed, hypersurface and a totally geodesic hyperbolic hyperplane can only be a union of connected, convex, immersed hypersurfaces and possibly finitely many other points in the hyperbolic hyperplane. This is because a strictly convex hypersurface 
is either transversal to a totally geodesic hyperbolic hyperplane or locally stays strictly on one side of the totally geodesic hyperbolic hyperplane at the intersection point. \\

 We claim that for any $\lambda<\lambda_0$ and $t\geq t_0$ each intersection $I_{p, \lambda}^t$ is a connected, immersed, compact, convex hypersurface ($n \geq 3$) or closed convex curve ($n=2$) in the totally geodesic hyperbolic hyperplane $D_\lambda^{n}(p)$. Hence our theorem, when $k=0$, follows from this claim and Theorem \ref{do-warner}.  Note that, in case $n = 2$, we instead use the fact that a connected, immersed, convex, closed curve is embedded if it is a limit of connected, embedded, convex, closed curves.\\

It is clear that $I_{p, \lambda}^t$ is a connected, compact, embedded, convex ovaloid when $\lambda$ is close to $\lambda_0$. This convex ovaloid stays as an embedded convex
ovaloid before some points emerge in $I_{p, \lambda}^t$ as $\lambda$ decreases from $\lambda_0$ in the light of Theorem \ref{do-warner}. 
To show that no point ever emerges in $I_{p, \lambda}^t$ we may assume 
otherwise $I_{p, \lambda_1}^t$ contains a point $q$ for the first time as $\lambda$ decreases from $\lambda_0$. One sees that the hyperbolic hyperplane $D^{n}_{\lambda_1}(p)$ 
is a support hyperplane at $q$ for the hypersurface $\phi_t$. It is clear that near $q$, the hypersurface $\phi_t$ lies locally on the side of the hyperplane $D_{\lambda_1}^n(p)$ that contains $p$ and the normal to the hypersurface $\phi_t$ at $q$ points to the same side due to the regularity of the Gauss map at infinity. But that would contradict \eqref{eq:strong-convex}. Therefore our theorem is proven when $k=0$. \\

The other extremal case is $k = n-1$. In this case $D^1_\lambda(p)$ is a geodesic with ends $(p, -\lambda)$ and $(p, \lambda)$ in ${\rm N}_{\lambda_0}({\rm E})\subset\s^n$. Instead of using hyperbolic hyperplanes we consider the ruled hypersurface  $\Sigma_\lambda = \bigcup_{p\in {\rm E}} D^1_\lambda (p)$ and the intersection 
$I_\lambda^t = \bigcup_{p\in {\rm E}}I_{p, \lambda}^t$. Assume otherwise, that for the first time, for some $\lambda_1$, among all $p\in {\rm E}$ and $\lambda$ decreasing from $\lambda_0$, the
intersection $I_{p, \lambda_1}^t$ contains more than just a single point. Then the hypersurface $\phi_t$ at the touch point, which has just emerged in $I^t_{p, \lambda_1}$, is tangent to the ruled hypersurface $\Sigma_{\lambda_1}$ and would possess some principal curvature nonnegative, which contradicts \eqref{eq:strong-convex}, similar to the situation dealt in the case $k=0$. \\

For general $k$ between $0$ and $k-1$, we are going to use the combination of the above two special cases. We claim that each $I_{p, \lambda}^t$ is an embedded, 
convex ovaloid or a simple closed convex curve in the totally geodesic hyperbolic subspace $D^{n-k}_\lambda (p)$. Again we will use  
the ruled hypersurface 
$$
\Sigma_\lambda^k = \bigcup_{p\in {\rm E}^k} D^{n-k}_\lambda (p),
$$ 
instead of hyperbolic hyperplanes. Assume otherwise, that for the first time, for some $\lambda_1$, among all $p\in {\rm E}^k$ and $\lambda$ decreasing from $\lambda_0$, some points emerge in the
intersection $I_{p, \lambda_1}^t$ other than the connected immersed convex surface ($n-k\geq 3$) or the connected convex closed curve ($n-k=2$). Then,  similar to the case $k = n-1$ above, the hypersurface $\phi_t$ at the touch point, which has just emerged in $I^t_{p, \lambda_1}$,  is tangent to the ruled hypersurface $\Sigma^k_{\lambda_1}$ so would have some principal curvature nonnegative, which contradicts \eqref{eq:strong-convex}. This completes the proof.

\endproof

\begin{remark}\label{end-structure} It is worth mentioning that the argument above is local in the sense that each component ${\rm E}^k$ is investigated independently. In other words, one may conclude that for $t$ large enough the hypersurface $\phi_t$ is embedded near those ends which are of manifold structure. 
It is also worth mentioning that, in fact, with the above argument we have shown that each end has the structure 
$$
{\rm E}^k\times \s^{n-k-1}\times (0, \infty),
$$
where $\s^{n-k-1}$ stands for a single point for $k = n-1$.
\end{remark}
 
%---------------------------------------------------
%---------------------------------------------------

\section{Elliptic Problems}\label{Sect:Ellipticity}

In this section we compare the elliptic problems associated with Weingarten hypersurfaces in hyperbolic space $\h^{n+1}$ to those of conformal metrics on domains of the conformal infinity $\s^n$. Both subjects have a long history and have been extensively studied. Although they are mostly treated separately, there is a clear indication that these two subjects should be intimately related in terms of the types of problems and the tools that have been used to study them. Our work here is an attempt to give a unified framework for the two subjects with a hope that in doing so, it will shed light on further investigation and research. For instance, comparing Obata type theorems and Alexandrov type theorems, we derive a new Alexandrov type theorem, which does not assume the hypersurface to be embedded. Similarly, comparing Liouville type theorems and Bernstein type theorems, we also obtain some new results.  \\

Unfortunately, the choices of convenient orientation between the discussions of admissible hypersurfaces and the elliptic problems of Weingarten hypersurfaces are opposite to each other. From here on we will take the orientation opposite to the canonical orientation for admissible hypersurfaces and we will refer to such orientation simply as the opposite orientation. 

%------------------------------------------------------------------

\subsection{Corresponding elliptic problems }\label{elliptic correspondence}

For a comprehensive introduction of conformally invariant elliptic PDE we refer readers to the papers  \cite{Li, GS, S, LiLi1,LiLi2} and references therein. We will briefly introduce the conformally invariant elliptic PDE in the context of our discussions.  Since we focus on realizable conformal metrics, we denote
$$
{\cal C} := \{(x_1, \cdots , x_n) \in \r ^n : x_i < 1/2, i = 1, \cdots, n\}
$$ 
and
$$
\Gamma_n := \{(x_1,\cdots, x_n ): x_i >0, i=1, 2, \cdots, n\}.
$$ 
Consider a symmetric function $f(x_1, \cdots, x_n)$ of $n$-variables with $f(\lambda_0, \lambda_0, \cdots, \lambda_0) = 0$ for some number $\lambda_0 < \frac 12$ and
$$
\Gamma = \ \text{an open connected component of }\{(x_1, \cdots, x_n): f(x_1, \cdots, x_n) > 0\}
$$
satisfying

\begin{equation}\label{w1}
(\lambda, \lambda, \cdots, \lambda) \in \Gamma \bigcap {\cal C}, \forall \ \lambda \in (\lambda_0, \frac 12),
\end{equation}

\begin{equation}\label{w2}
\aligned
& \text{ $\forall \ (x_1, \cdots, x_n)\in \Gamma\cap{\cal C}$, $\forall \ (y_1, \cdots, y_n)\in\Gamma\cap{\cal C}\cap ((x_1, \cdots, x_n) + \Gamma_n)$, $\exists$ a curve $\gamma$} \\ & \text{ connecting $(x_1, \cdots, x_n)$ to $(y_1, \cdots, y_n)$ inside $\Gamma\cap{\cal C}$ such that $\gamma' \in \Gamma_n$ along $\gamma$,}
\endaligned
\end{equation}
and
 \begin{equation}\label{w3}
 f \in C^1(\Gamma) \ \text{and} \ \frac{\partial f }{ \partial x_i} >0 \text{ in } \Gamma.
 \end{equation}
Suppose $g = e^{2\rho}g_{\ts^n}$ is a conformal metric on a domain $\Omega$ of $\s^n$ satisfying

\begin{equation}\label{equ:elliptic}
f(\lsch) = C  \ \text{and} \ \lsch \in \Gamma \ \text{in} \ \Omega,
\end{equation}
for some positive constant $C$, where $\lsch$ is the set of eigenvalues of the Schouten curvature tensor of the metric $g$. In \eqref{equ:elliptic}, a positive constant $C$ is admissible for a given curvature function $f$ if $f(\bar\lambda_0, \bar\lambda_0, \cdots, \bar\lambda_0) = C$, $\frac{\partial f}{\partial x_i} (\bar\lambda_0, \bar\lambda_0, \cdots, \bar\lambda_0) > 0$, and $\bar\lambda_0 > \lambda_0$. We refer to equation \eqref{equ:elliptic} as the conformally invariant elliptic problem of the conformal metrics on the domain $\Omega$. \\

On the other hand, we have the following elliptic problems of Weingarten hypersurfaces. For a comprehensive introduction of Weingarten hypersurfaces we refer to the papers  \cite{EH,Ge,Ge2, Ko} and references therein. We will briefly introduce the elliptic problems of Weingarten hypersurfaces in our context. Again, our focus is on admissible hypersurfaces with the opposite orientation.  Let 
$$
{\cal K} := \{(x_1,\cdots, x_n)\in \r^{n} : x_i > -1, i = 1, \cdots, n\}.
$$
Consider a symmetric function ${\cal W}(x_1, \cdots, x_n)$ of $n$-variables with ${\cal W}(\kappa_0, \kappa_0, \cdots, \kappa_0) = 0$ for some number $\kappa_0 > -1$ and
$$
\Gamma^* =  \text{ an open connected component of } \{(x_1, \cdots, x_n): {\cal W}(x_1, \cdots, x_n) > 0\}
$$
satisfying

\begin{equation}\label{w4}
(\kappa, \kappa, \cdots, \kappa) \in  \Gamma^*\bigcap {\cal K}, \forall \ \kappa \in (\kappa_0, \infty),
\end{equation}

\begin{equation}\label{w5}
\aligned
& \text{ $\forall \ (x_1, \cdots, x_n)\in \Gamma^*\cap{\cal K}$, $\forall \ (y_1, \cdots, y_n)\in\Gamma^*\cap{\cal K}\cap ((x_1, \cdots, x_n) + \Gamma_n)$, $\exists$ a curve } \\ & \text{$\gamma$ connecting  $(x_1, \cdots, x_n)$ to $(y_1, \cdots, y_n)$ inside $\Gamma^*\cap{\cal K}$ such that $\gamma' \in \Gamma_n$ along $\gamma$,}
\endaligned
 \end{equation}
and
 \begin{equation}\label{w6}
 {\cal W} \in C^1 (\Gamma^*) \text{ and }\frac{\partial {\cal W} }{ \partial x_i} >0 \text{ in } \Gamma^*.
 \end{equation}
Suppose $\phi: \rm{M}\to \h^{n+1}$ is a hypersurface satisfying

\begin{equation}\label{equ:weingarten}
{\cal W}(\kappa_1, \cdots, \kappa_n) = K \ \text{and} \ (\kappa_1, \cdots, \kappa_n)\in \Gamma^* \ \text{on} \ \phi,
\end{equation}
for some positive constant $K$, where $(\kappa_1, \cdots, \kappa_n)$ is the set of principal curvatures of the hypersurface $\phi$. In \eqref{equ:weingarten}, a positive number $K$ is admissible for a given curvature function $\cal W$ if ${\cal W }(\bar\kappa_0, \bar\kappa_0, \cdots, \bar\kappa_0) = K, \frac {\partial {\cal W}}{\partial x_i} (\bar\kappa_0, \bar
\kappa_0, \cdots, \bar\kappa_0) > 0$, and $\bar\kappa_0 > \kappa_0$. We refer to equation \eqref{equ:weingarten} as the elliptic problem of Weingarten hypersurfaces.

\begin{remark} For the motivation of \eqref{w2} and \eqref{w5}, please see the proof of Theorem \ref{Th:GeneralizedBernstein}, where \eqref{w5} is shown to be sufficient to 
apply the Alexandrov reflection method. On the other hand, it is more appropriate to use curves instead of rays in \eqref{w2} and \eqref{w5}, since the curvature relation is non-linear.
\end{remark}

To relate these two elliptic problems, in the light of Theorem \ref{Th:representacion}, we consider 

\begin{equation}\label{map-t}
{\cal T} (x_1,\cdots, x_n)= \left(\frac 12 -  \frac 1{1 +x_1}, \cdots, \frac 12 - \frac 1{1+ x_n}\right): \cal K \to \cal C.
\end{equation}
Let us discuss the correspondence between conformally invariant elliptic problems of realizable metrics and elliptic problems of admissible Weingarten hypersurfaces. By our definitions, only $\Gamma\cap{\cal C}$ is relevant for a realizable metric and only $\Gamma^*\cap{\cal K}$ is relevant for an admissible hypersurface with the opposite orientation. Below we list some fundamental relations 
and facts for the correspondence between the elliptic problems of conformal metrics and Weingarten hypersurfaces. 

\vskip 0.1in\noindent{\bf Symmetric Functions}:

\begin{quote} 
\begin{equation}\label{function}
{\cal W}_f = f\circ {\cal T} \ \text{and} \  {f}_{\cal W} = {\cal W}\circ {\cal T}^{-1}.
\end{equation}
\end{quote}

\vskip 0.1in\noindent{\bf Domains}:

\begin{quote} 
\begin{equation}\label{cone}
{\cal T}(\Gamma^*\cap{\cal K}) = \Gamma\cap\cal C.
\end{equation}
\end{quote}
It is clear that
\begin{equation}\label{gamma-n}
{\cal T} ((\kappa_0, \kappa_0, \cdots, \kappa_0) + \Gamma_n) = ((\lambda_0, \lambda_0, \cdots, \lambda_0) + \Gamma_n)\cap \cal C,
\end{equation}
where $\lambda_0 = \frac 12 - \frac 1{ 1 + \kappa_0}$. In fact,
\begin{equation}\label{gamma-n+}
{\cal T}((x_1, \cdots, x_n) + \Gamma_n) = ({\cal T}(x_1, \cdots, x_n) + \Gamma_n)\cap{\cal C}
\end{equation}
%and
%\begin{equation}\label{gamma-n-}
%{\cal T}(((x_1, \cdots, x_n) - \Gamma_n)\cap{\cal K}) = {\cal T}(x_1, \cdots, x_n) - \Gamma_n
%\end{equation}
for all $(x_1, \cdots, x_n)\in {\cal K}$. Therefore \eqref{w1} holds for $\Gamma\cap{\cal C}$ if and only if \eqref{w4} holds for $\Gamma^*\cap{\cal K}$. Moreover,  
we also see \eqref{w2} holds for $\Gamma\cap{\cal C}$ if and only if \eqref{w5} holds for $\Gamma^*\cap{\cal K}$.

\vskip 0.1in\noindent{\bf Ellipticity}:
\begin{quote}
\begin{equation}\label{ellipticity}
\aligned
& \text{$\frac{\partial {\cal W}_f}{\partial \kappa_i} > 0$ in $\Gamma^*\cap{\cal K}$ if and only if $\frac{\partial f}{\partial \lambda_i} > 0$ in $\Gamma\cap{\cal C}$.}\\
&\text{$\frac{\partial f_{\cal W}}{\partial \lambda_i} > 0$ in $\Gamma\cap{\cal C}$ if and only if $\frac{\partial {\cal W}}{\partial \kappa_i} > 0$ in $\Gamma^*\cap{\cal K}$.}
\endaligned
\end{equation}
\end{quote}

\noindent{\bf Homogeneity}: \vskip 0.1in

\begin{quote}
Homogeneity of symmetric functions is not preserved under this correspondence. In fact, scaling on the metric side corresponds to deforming along the normal flow in hypersurface side.
 \end{quote}
 
\vskip 0.1in\noindent{\bf Concavity}:\vskip 0.1in

\begin{quote}
The concavity on the other hand is preserved under this correspondence from $f$ to $\cal W$, but not necessarily from $\cal W$ to $f$. The concavity of a function is understood to be the 
nonpositivity of the Hessian matrix. One may simply calculate that

$$ \dfrac{\partial ^2 \mathcal W_f}{\partial \kappa_i \partial \kappa_j} = \frac{1}{(1+\kappa_i)^2 (1+\kappa_j)^2}  \dfrac{\partial ^2 f}{\partial \lambda _i \partial \lambda _j}  -\frac{2 \delta _{ij}}{(1+\kappa_i)^3} \dfrac{\partial f}{\partial \lambda _i} $$
and 
$$ \dfrac{\partial ^2 f_{\cal W}}{\partial \lambda_i \partial \lambda_j}  = \frac{1}{(\frac 12 - \lambda_i)^2 (\frac 12 - \lambda_j)^2}  \dfrac{\partial ^2 {\cal W}}{\partial \kappa _i \partial \kappa _j}+  \frac{2 \delta _{ij}}{(\frac 12 - \lambda_i)^3} \dfrac{\partial {\cal W}}{\partial \kappa _i} .
$$
Hence, instead, the convexity is preserved under this correspondence from ${\cal W}$ to $f$.
\end{quote}

\newpage\noindent{\bf Admissible Constants}:\vskip 0.1in

\begin{quote}
In \eqref{equ:elliptic}, a positive constant $C$ is admissible for a given curvature function $f$ if $f(\bar\lambda_0, \bar\lambda_0, \cdots, \bar\lambda_0) = C$, 
$\frac{\partial f}{\partial x_i} (\bar\lambda_0, \cdots, \bar\lambda_0) > 0$, and $\bar\lambda_0 > \lambda_0$, while in \eqref{equ:weingarten}, a positive constant $K$ is admissible for a given curvature function $\cal W$ if ${\cal W }(\bar\kappa_0, \cdots, \bar\kappa_0) = K, \frac {\partial {\cal W}}{\partial x_i} (\bar
\kappa_0, \cdots, \bar\kappa_0) > 0$, and $\bar\kappa_0 > \kappa_0$, where $\bar\lambda_0 = \frac 12 - \frac 1{1 + \bar\kappa_0}$. Geometrically it means that  the horospherical 
metric of a geodesic sphere of principal curvature $\kappa > 1$ is of constant sectional curvature $<1$; the horospherical metric of a horosphere is of zero sectional curvature; and
the horospherical metric of a hypersphere of principal curvature $\kappa\in [0, 1)$ is of negative constant sectional curvature. 
\end{quote}

\vskip 0.1in\noindent{\bf Scalar Curvature vs Mean Curvature}:

\begin{quote} 
In the context of solving elliptic problems one typically assumes
$$
\Gamma\subset \Gamma_1= \{(x_1, \cdots, x_n): \sum_{i=1}^n x_i \geq 0\}
$$
and
$$
\Gamma^* \subset \Gamma^*_1=\{(x_1, \cdots, x_n): \sum_{i=1}^n x_i \geq n\}.
$$
In contrast to ${\cal T}(\Gamma^*_n) = \Gamma_n\cap{\cal C}$, we only have
\begin{equation}\label{scalar-mean}
{\cal T}^{-1}(\Gamma_1) \subset \Gamma^*_1.
\end{equation}
Therefore, we only have $\Gamma\cap{\cal C}\subset \Gamma_1$ implies $\Gamma^*\cap{\cal K}\subset \Gamma^*_1$, but not necessarily the converse.
\end{quote}

Before we end this subsection we would like to give a proof of \eqref{scalar-mean}. This turns out to be a consequence of the following simple algebraic fact.

\begin{lemma}\label{H-R}
Let $a_i $ be real numbers so that $a_i > -1$ for all $i =1, \cdots , n$. Set $b_i = \dfrac{a_i -1}{a_i +1}$. Then 
$$\sum _{i=1}^n b_i \leq 2 \sum _{i=1}^n a_i - n .$$
\end{lemma}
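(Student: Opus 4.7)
The plan is to reduce the sum inequality to a pointwise inequality. That is, I will show that for every real number $a > -1$,
\begin{equation*}
\frac{a-1}{a+1} \leq 2a - 1,
\end{equation*}
and then the lemma follows by summing this estimate over $i = 1, \ldots, n$.

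To establish the pointwise inequality, observe that since $a + 1 > 0$, multiplying both sides by $a+1$ preserves the inequality, so the claim is equivalent to
\begin{equation*}
a - 1 \leq (2a-1)(a+1) = 2a^2 + a - 1,
\end{equation*}
which simplifies to $0 \leq 2a^2$, clearly true (with equality iff $a = 0$). Equivalently, writing $b_i = 1 - \frac{2}{a_i + 1}$, the desired pointwise inequality $b_i \leq 2a_i - 1$ reads $(1 - a_i)(a_i + 1) \leq 1$, i.e.\ $1 - a_i^2 \leq 1$, again obviously true.

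I don't anticipate any obstacle: the lemma is essentially an elementary one-variable inequality, and the fact that the hypothesis $a_i > -1$ exactly ensures $a_i + 1 > 0$ so that the cross-multiplication step is legitimate. Summing $b_i \leq 2a_i - 1$ over $i$ yields $\sum b_i \leq 2\sum a_i - n$, as required.
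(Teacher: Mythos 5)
Your proof is correct. The reduction to the single-variable inequality $\frac{a-1}{a+1}\leq 2a-1$ for $a>-1$ is valid, the cross-multiplication is justified exactly by the hypothesis $a+1>0$, and summing over $i$ gives the claim. The paper proves the same lemma by a slightly different device: it introduces $Q(t)=\sum_{i=1}^n \frac{a_i}{1+a_i t}$, computes $Q'(t)=-\sum_i \frac{a_i^2}{(1+a_i t)^2}\leq 0$ to conclude $Q(1)\leq Q(0)=\sum_i a_i$, and then rewrites $\sum_i b_i = 2Q(1)-n$. Both arguments ultimately rest on the same elementary fact --- the nonnegativity of $a_i^2$ --- but yours dispenses with the auxiliary function and the calculus, proving the inequality termwise by direct algebra, which is arguably cleaner; the paper's monotonicity formulation has the minor virtue of exhibiting the estimate as a deformation from $t=0$ to $t=1$, but buys nothing essential here.
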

\begin{proof}
Set $Q(t)=  \sum _{i=1}^n \dfrac{a_i}{1 + a _i t}$ for $0 \leq t \leq 1$. Then, 
$$ 
Q' (t) = - \sum _{i=1}^n \frac{a_i ^2}{ (1+ a_i t)^2} \leq 0, 
 $$
which implies $  Q(1) \leq Q(0)=\sum _{i=1}^n a_i $. On the other hand,
\begin{equation*}
\begin{split}
\sum _{i=1}^n b_i &= \sum _{i=1}^n \frac{a_i -1}{a_i +1 }= 2\sum _{i=1}^n \frac{a_i}{a_i +1} - \sum _{i=1}^n \frac{a_i +1}{a_i +1}\\[3mm]
 &=  2 Q(1) - n.
\end{split}
\end{equation*}
Therefore, the lemma is easily seen.
\end{proof}

Let $\phi: {\rm M}^n\to\h^{n+1}$ be a horospherically convex hypersurface with the orientation opposite to the canonical one.  Then, $\lambda _i $ and $\kappa _i$ are related by 
\begin{equation}\label{1}
 2 \lambda _i = \frac{\kappa _i  -1}{\kappa _i +1} ,
\end{equation}or equivalently, 
\begin{equation}\label{2}  
\kappa _i = \frac{1 + 2 \lambda _i}{1-2 \lambda _i}.
\end{equation}

Set $a_i := -2 \lambda _i$ and $b_i := - \kappa _i$. Note that since $\lambda _i < 1/2$, then $1-2 \lambda _i >0$, that is, $a_i > -1$. Therefore, from \eqref{1} it follows
$$ -\sum _{i=1}^n \kappa _i \leq -4 \sum _{i=1}^n \lambda_i - n,$$
so that
\begin{equation}\label{eq:H-R} 
\sum _{i=1}^n \lambda _i \geq 0 \quad \text{ implies } \quad \sum _{i=1}^n \kappa _i \geq n, 
\end{equation}
which in turn implies \eqref{scalar-mean}.

%-----------------------------------------------------------------------

\subsection{Obata Theorem  vs Alexandrov Theorem}\label{obata-alexandrov}

Here we establish an explicit relationship between a famous theorem in conformal geometry and a famous theorem in hypersurface theory. Namely, the Obata Theorem and the Alexandrov Theorem (cf. \cite{EGM}). First, let us state the aforementioned results. For conformal metrics, we have the following:

\begin{quote}
{\bf Obata Theorem \cite{Ob,GNN}} {\it Let $g$ be a metric conformal to the standard round metric $g_{\ts^n}$ on $\s ^n$ with constant positive scalar curvature. Then, there exists a conformal diffeomorphism $\Phi : \s ^n \to \s ^n $ and a postive constant $a>0$ such that $g = a \, \Phi ^* g_{\ts^n}$.} 
\end{quote}
Its generalization to fully nonlinear elliptic functions $(f, \Gamma)$ is as follows:

\begin{quote}
{\bf Generalized Obata Theorem \cite{LiLi1}} {\it Let $g$ be a metric conformal to the standard round metric $g_{\ts^n}$ on $\s ^n$. Suppose that $(f, \Gamma)$ is  elliptic in the sense that it satisfies \eqref{w1} \eqref{w2}  \eqref{w3} and  that 
$$ f (\lsch) = C , \, \lsch \in \Gamma $$
for an admissible constant $C$. Then, there exists a conformal diffeomorphism $\Phi : \s ^n \to \s ^n $ and a positive constant $a$ such that $g = a \, \Phi ^* g_{\ts^n}$.}
\end{quote}
For hypersurfaces in hyperbolic space we have the following:

\begin{quote}
{\bf Alexandrov Theorem \cite{A,Ho}} {\it Let $\Sigma \subset \h ^{n+1}$ be a compact (without boundary) embedded hypersurface with constant mean curvature. Then, $\Sigma$ is a totally umbilical round sphere.}
\end{quote}
Its generalization to elliptic Weingarten hypersurfaces $({\cal W}, \Gamma^*)$ is as follows:

\begin{quote}
{\bf Generalized Alexandrov Theorem \cite{Ko}} {\it Let $\Sigma\in \h^{n+1}$ be compact (without boundary) embedded hypersurface. Suppose that $({\cal W}, \Gamma^*)$ satisfies 
\eqref{w4} \eqref{w5} \eqref{w6} and that 
$$
{\cal W} (\kappa_1, \cdots, \kappa_n) = K \ \text{and} \ (\kappa_1, \cdots, \kappa_n)\in \Gamma^*
$$
on $\Sigma$, where $K$ is admissible. Then $\Sigma$ is totally umbilical round sphere.}
\end{quote}

In the light of the correspondence observed in Theorem \ref{Th:representacion} of  \cite{EGM} and the discussions in Section \ref{elliptic correspondence} we obtain a new Alexandrov type theorem for horospherical ovaloids as an equivalent statement of the generalized Obata Theorem of Li-Li \cite{LiLi1, LiLi2} above (notice that a conformal metric on $\s^n$ is always realizable).  But due to our 
Corollary \ref{cpt-embedding}, it can be seen as a consequence of the generalized Alexandrov Theorem of Korevaar \cite{Ko}. Thus, such new Alexandrov type theorem becomes the bridge connecting the two sides and it is interesting to see that the generalized Alexandrov Theorem of Korevaar implies the generalized Obata Theorem of Li-Li
\cite{LiLi1, LiLi2}, instead of the other way around as given in \cite{EGM}. 

\begin{theorem}\label{Th:GeneralizedAlexandrov}
Suppose that $({\cal W}, \Gamma^*)$ satisfies 
\eqref{w4} \eqref{w5} \eqref{w6}. Then a horospherical ovaloid in $\h^{n+1}$ with the opposite orientation satisfying \eqref{equ:weingarten} for an admissible constant is a geodesic 
sphere in $\h^{n+1}$. Equivalently, suppose that $(f, \Gamma)$ satisfies \eqref{w1} \eqref{w2} \eqref{w3}. Then any conformal metric on $\s^n$ satisfying \eqref{equ:elliptic} for an
admissible constant is isometric to a round metric on $\s^n$.
\end{theorem}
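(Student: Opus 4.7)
The plan is to reduce the hypersurface statement to Korevaar's generalized Alexandrov Theorem via the normal flow, and then transfer to the conformal side through Theorem \ref{Th:representacion}. Let $\phi:{\rm M}^n\to\h^{n+1}$ be a horospherical ovaloid with the opposite orientation (so that $\kappa_i>-1$) satisfying ${\cal W}(\kappa_1,\ldots,\kappa_n)=K$ with $({\cal W},\Gamma^*)$ as in \eqref{w4}--\eqref{w6} and $K$ admissible. By Corollary \ref{cpt-embedding}, there is some $t_0>0$ such that the normal-flow leaf $\phi_t$ given by \eqref{parallel-flow} is an embedded ovaloid in $\h^{n+1}$ for every $t\geq t_0$.

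Under the normal flow, the principal curvatures transform via the Ricatti relation \eqref{kappa-t}, which I rewrite as $\kappa_i=\sigma_t(\kappa_i^t)$ with $\sigma_t(y):=(y+\tanh t)/(1+y\tanh t)$. Consequently $\phi_t$ satisfies the transformed Weingarten equation ${\cal W}_t(\kappa_1^t,\ldots,\kappa_n^t)=K$, where ${\cal W}_t:={\cal W}\circ(\sigma_t,\ldots,\sigma_t)$ is defined on $\Gamma^*_t:=\{(y_1,\ldots,y_n)\in{\cal K}:(\sigma_t(y_1),\ldots,\sigma_t(y_n))\in\Gamma^*\}$. The main verification is that $({\cal W}_t,\Gamma^*_t,K)$ again satisfies the admissibility hypotheses \eqref{w4}--\eqref{w6} of Korevaar's theorem. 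For this I use that $\sigma_t$ is a strictly increasing smooth diffeomorphism of $(-1,\infty)$ onto itself, with positive derivative $\sigma_t'(y)=(1-\tanh^2 t)/(1+y\tanh t)^2$. Applying this componentwise, the map $\Phi_t:=(\sigma_t,\ldots,\sigma_t)$ is an orientation-preserving diffeomorphism of ${\cal K}$ that fixes the diagonal direction, pushes the positive cone $\Gamma_n$ to itself at every point (in the sense of tangent vectors), and has pointwise positive partial derivatives. Hence \eqref{w4}, \eqref{w5}, \eqref{w6} for $({\cal W},\Gamma^*)$ transfer directly to $({\cal W}_t,\Gamma^*_t)$ via the chain rule, and the admissibility of $K$ is preserved by taking $\bar\kappa_0^t:=\sigma_t^{-1}(\bar\kappa_0)>\sigma_t^{-1}(\kappa_0)=\kappa_0^t$. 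Korevaar's generalized Alexandrov Theorem then forces $\phi_t$ to be a totally umbilical geodesic sphere for every $t\geq t_0$.

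To finish, I run the normal flow backward from $\phi_t$. Since the original $\phi=\phi_0$ is smooth and horospherically convex, the flow is smooth and focal-point-free on the whole interval $[0,t_0]$, so the backward flow from $\phi_t$ realizes $\phi$; as the normal flow of a geodesic sphere is always a geodesic sphere (of a different radius) wherever it remains smooth, $\phi$ itself must be a geodesic sphere. The equivalent statement on the conformal side is then immediate: any conformal metric on $\s^n$ is realizable by compactness, so Theorem \ref{Th:representacion} (after a rescaling, i.e., a shift along $t$ in \eqref{phi-t}) produces a horospherical ovaloid whose horospherical metric is the given $g$, and the dictionary \eqref{function}--\eqref{ellipticity} translates $(f,\Gamma,C)$ elliptic and admissible into $({\cal W}_f,\Gamma^*,C)$ elliptic and admissible; the hypersurface conclusion just proven forces this ovaloid to be a geodesic sphere, whose horospherical metric is a round metric on $\s^n$.

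The principal technical obstacle is the preservation of the full admissibility structure \eqref{w4}--\eqref{w6} under the Ricatti flow $\sigma_t$ and, relatedly, ensuring that \emph{the same} Korevaar hypothesis apparatus applies to the transformed problem rather than some weaker variant; this is precisely why \eqref{w5} was formulated with curves (not rays) in the positive cone, so that the componentwise diffeomorphism $\Phi_t$ transports such curves cleanly. Everything else in the argument is bookkeeping through the correspondences already established in Section \ref{Sect: local} and Section \ref{elliptic correspondence}.
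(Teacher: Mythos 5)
Your proof is correct and follows essentially the same route as the paper's: unfold to an embedded ovaloid via Corollary \ref{cpt-embedding}, transport the Weingarten data through the Ricatti/M\"obius change of variable, verify that \eqref{w4}, \eqref{w5}, \eqref{w6} survive, invoke Korevaar's generalized Alexandrov Theorem, and return to $\phi$ and to the conformal side via the established correspondence. The only (inconsequential) slips are that with the opposite orientation used in Section \ref{Sect:Ellipticity} the reparametrization is $y\mapsto (y-\tanh t)/(1-y\tanh t)$ rather than your $\sigma_t$, and that this map is a diffeomorphism of $(-1,\coth t)$ onto $(-1,\infty)$ rather than of $(-1,\infty)$ onto itself, neither of which affects the monotonicity, cone-preservation, and ellipticity properties the argument actually uses.
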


\begin{proof} 
According to Theorem \ref{Th:Correspondence}, the horospherical ovaloid $\Sigma_t$ along the normal flow of the given horospherical ovaloid $\Sigma$ becomes embedded when $t\geq t_0$ for some $t_0$.  Moreover,
$$ 
\kappa _i = \dfrac{\kappa_i ^t - \tanh (t)}{1- \kappa ^t_i \tanh (t)} \ \text{ and } \ \kappa^t_i = \dfrac{\tanh(t) + \kappa_i}{1 +\kappa_i\tanh(t)}.
$$
Hence, $\Sigma _t$ is still an elliptic Weingarten hypersurface for all $t > t_0$. To see this, we let 
$$
\mathcal{W}^t (x_1 , \cdots , x_n) := \mathcal W \left( \dfrac{x_1- \tanh (t)}{1- x_1 \tanh (t)} , \cdots , \dfrac{x_n - \tanh (t)}{1- x_n \tanh (t)}\right).
$$ 
Therefore, ${\cal W}^t$ is a symmetric function of $n$-variables with ${\cal W}^t(1, \cdots, 1)= 0$.  Let 
$$
T(x_1, \cdots, x_n) = \left(\dfrac{\tanh(t) +x_1}{1+x_1\tanh(t)}, \cdots, \dfrac{\tanh(t) + x_n}{1 +x_n\tanh(t)}\right).
$$
We then have
$$
\Gamma^*_t \cap{\cal K}= T(\Gamma^*\cap{\cal K}).
$$
Similar to the case of the map $\cal T$, we in fact have
$$
T((x_1, \cdots, x_n) + \Gamma_n) = T(x_1, \cdots, x_n) + \Gamma_n
$$
for all $(x_1, \cdots, x_n)\in \cal K$. For ellipticity we easily calculate
$$
\dfrac{\partial {\cal W}^t}{\partial x_i} = \dfrac{1 - \tanh^2(t)}{(1 + x_i\tanh(t))^2} \dfrac{\partial {\cal W}}{\partial y_i}.
$$
Therefore, $({\cal W}^t, \Gamma^*_t)$ satisfies \eqref{w4} \eqref{w5} \eqref{w6}.  Thus, from the above generalized Alexandrov Theorem of Korevaar \cite{Ko}, $\Sigma _t$ is a totally umbilical round sphere for $t\geq t_0$,  and therefore so is $\Sigma$.
\end{proof}

%-------------------------------------------------------------------------------

\subsection{Liouville Theorem vs Bernstein Theorem}

Next to compact hypersurfaces in $\h^{n+1}$, the simplest noncompact hypersurfaces in $\h^{n+1}$ are those that have a single point at the infinity $\s^n$. Their corresponding domains in $\s^n$ are punctured spheres $\s^n\setminus\set{\bf n}$. In this context, we establish an explicit relationship between another pair of celebrated theorems in conformal geometry and in hypersurface theory: Liouville type theorems and Bernstein type theorems. We focus on the cases where the positive constants in \eqref{equ:elliptic} and \eqref{equ:weingarten} are admissible and elliptic equations are non-degenerate. First, let us state the aforementioned results.

\begin{quote}
{\bf Liouville Theorem \cite{CGS}} {\it The only complete conformal metrics on $\s ^n \setminus \set{\bf n}$ with nonnegative constant scalar curvature is the Euclidean metric.}
\end{quote}
Its generalization  to fully nonlinear non-degenerate elliptic functions $(f, \Gamma)$ is as follows:

\begin{quote}
{\bf Generalized Liouville Theorem \cite{LiLi2}} {\it Suppose that $(f, \Gamma)$ is elliptic in the sense that it satisfies \eqref{w1} \eqref{w2}  \eqref{w3}. Then the only possible 
complete conformal metric of nonnegative scalar curvature on $\s^n\setminus\set{\bf N}$ satisfying \eqref{equ:elliptic} for an admissible constant is the Euclidean metric.}
\end{quote}
We remark that the above theorems are simplified versions of Theorem 1.4 in \cite{LiLi1} and Theorem 1.3 in \cite{LiLi2}. For hypersurfaces in hyperbolic space we have the following:

\begin{quote}
{\bf Bernstein Theorem \cite{B, dCL}:} {\it The only properly embedded, complete,  constant mean curvature $H\geq n$ hypersurfaces with one point at infinity in $\h ^{n+1}$ are horospheres.}
\end{quote}
As far as we know, the above result has been generalized for special Weingarten surfaces in $\h^{3}$ (see \cite{RSa,SaT} and \cite{AEG}), but not for higher dimensions. We follow the proof in \cite{dCL} and establish the following generalized Bernstein Theorem:

\begin{theorem}[Generalized Bernstein Theorem]\label{Th:GeneralizedBernstein}
Suppose that $({\cal W}, \Gamma^*)$ is an elliptic function satisfying \eqref{w4} \eqref{w5} \eqref{w6}. Then the only possible properly embedded, complete hypersurface in $\h^{n+1}$ 
satisfying \eqref{equ:weingarten} for an admissible constant with only one point at infinity is a horosphere. 
\end{theorem}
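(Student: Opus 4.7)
The plan is to adapt the Alexandrov--reflection proof of the classical Bernstein theorem in \cite{dCL} to the general elliptic Weingarten setting, using the machinery the authors have built up earlier: the global correspondence of Theorem~\ref{uniform-bounded}, the embedding criterion of Theorem~\ref{Th:Correspondence}, and the moving-plane/Hopf-lemma apparatus that \eqref{w4}--\eqref{w6} make available.

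First I would place the single asymptotic point $p = \partial_\infty\phi(M^n)$ at $\infty$ in the upper half-space model $\h^{n+1} = \{(x,y)\in\r^n\times\r_+\}$. Admissibility of the constant $K$ together with the strict ellipticity \eqref{w6} keeps the principal curvatures of $\phi$ strictly inside $\Gamma^*\cap\mathcal{K}$, so $\phi$ is uniformly horospherically convex in the sense of Definition~\ref{uniform-cv}. The single point $\{p\}\subset\s^n$ is a smooth compact $0$-submanifold with no boundary, so the hypotheses of Theorem~\ref{uniform-bounded} and Theorem~\ref{Th:Correspondence} are met, and $\phi$ corresponds to a complete conformal metric $e^{2\rho}g_{\ts^n}$ on $\s^n\setminus\{p\}\cong\r^n$ with bounded curvature and with Gauss map a diffeomorphism onto $\s^n\setminus\{p\}$.

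Next I would run the moving-plane procedure along every horizontal direction $\nu\in\s^{n-1}$. The vertical totally geodesic hyperplanes $\Pi_t^\nu=\{x\cdot\nu=t\}$ give hyperbolic reflections $R_t^\nu$ which fix $\infty$; the Weingarten equation \eqref{equ:weingarten} is invariant under each $R_t^\nu$, and the reflected hypersurface $R_t^\nu(\phi(M^n))$ still has only $\infty$ as its asymptotic point. Slide $\Pi_t^\nu$ down from $t\gg 1$ to the first critical value $t_*^\nu$ at which $R_{t_*^\nu}^\nu(\phi(M^n))$ makes interior contact with, or internal tangency to, $\phi(M^n)$. Writing the two sheets locally as graphs, the path-convexity condition \eqref{w5} produces an averaged linearization of $\mathcal{W}$ whose coefficients lie in $\Gamma_n$ (hence nonnegative in the right sense), and the strict ellipticity \eqref{w6} provides uniform positivity; the strong maximum principle and Hopf boundary-point lemma then force $\phi(M^n) = R_{t_*^\nu}^\nu(\phi(M^n))$. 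Carrying this out for every $\nu$ yields a common vertical axis $L$ of reflection symmetry, so $\phi(M^n)$ is a rotationally symmetric surface of revolution about $L$. Substituting into \eqref{equ:weingarten} produces an ODE for the profile curve $r=r(y)$; any non-constant profile either closes up (giving a compact surface, contradicting $\partial_\infty\phi(M^n)=\{p\}$) or reaches a second point of $\partial\h^{n+1}$, again contradicting the single-end hypothesis. The remaining solution is $r\equiv\mathrm{const}$, i.e.\ $\phi(M^n)$ is a horizontal horosphere centered at $p$.

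The main obstacle will be the initialization and ``terminal-at-$p$'' analysis of the moving plane: one must guarantee that for $|t|$ large the plane $\Pi_t^\nu$ really is disjoint from $\phi(M^n)$, and that the first contact cannot occur only in the limit $y\to\infty$ (since such a limiting contact would fail to yield the reflection symmetry). Both points hinge on an asymptotic ``horosphere-like'' control of $\phi$ near $p$, which in \cite{dCL} is supplied by hypersurface-theoretic estimates tailored to CMC, but which in our generality must be extracted from the uniform horospherical convexity together with the completeness and boundedness of curvature of the horospherical metric via Theorem~\ref{uniform-bounded}. The second subtlety is that condition \eqref{w5} is used precisely here, in guaranteeing that the comparison operator arising from the difference of two tangent Weingarten graphs stays in the elliptic cone for the whole path along which averaging is carried out.
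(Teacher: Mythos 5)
Your proposal diverges from the paper's argument at the one point where the single-point-at-infinity hypothesis actually has to do work, and that divergence creates a genuine gap. You propose to move \emph{vertical} totally geodesic hyperplanes $\Pi^\nu_t=\{x\cdot\nu=t\}$ in from $t\gg 1$; but a properly embedded hypersurface whose only ideal point is $\infty$ in the half-space model necessarily escapes to horizontal infinity (the model case $\{y=c\}$ meets \emph{every} vertical hyperplane), so there is no starting position at which $\Pi^\nu_t$ is disjoint from $\Sigma$ and the reflected half lies on one side. You flag this as "the main obstacle" and suggest it can be "extracted from uniform horospherical convexity and bounded curvature of the horospherical metric," but no such asymptotic control is established anywhere in the paper, and admissibility of the constant $K$ plus \eqref{w6} does not even give uniform horospherical convexity (it gives $(\kappa_1,\dots,\kappa_n)\in\Gamma^*\cap\mathcal K$ pointwise, not a uniform bound away from $-1$); the detour through Theorems \ref{uniform-bounded} and \ref{Th:Correspondence} is in any case unnecessary here since embeddedness is a hypothesis. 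The paper, following \cite{dCL}, sidesteps the initialization problem entirely by reflecting across the totally geodesic hyperplanes $P_t$ \emph{orthogonal} to a vertical geodesic $\gamma$ -- Euclidean hemispheres centered at the foot $q\neq\infty$ of $\gamma$ -- which for small radius are genuinely disjoint from $\Sigma$ precisely because $q\notin\partial_\infty\Sigma$; one then expands $P_t$, reflects the inner cap, and shows the reflection never touches the rest of the surface, even at its boundary.

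The second divergence is the endgame. You reduce to rotational symmetry about a vertical axis and then appeal to an unexamined ODE classification of rotational Weingarten profiles with a single end; for a general elliptic $(\mathcal W,\Gamma^*)$ this is a nontrivial Delaunay-type analysis that you do not carry out. The paper instead concludes geometrically: if $\Sigma$ is not a horizontal plane, then for \emph{some} vertical line the reflected cap does touch $\Sigma_+(t)$ at the boundary (Figure 2 of \cite{dCL}), contradicting the first step -- no ODE is needed. The one component of your proposal that does match the paper is the local tangency analysis: using \eqref{w5} to connect the two principal-curvature vectors by a path with tangent in $\Gamma_n$, concluding from $\mathcal W(\kappa^-)=\mathcal W(\kappa)$ and \eqref{w6} that they coincide, and then linearizing $\mathcal W$ along that path to invoke the Hopf maximum principle. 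That part is right, but as it stands the global reflection scheme it is meant to serve cannot be launched.
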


\begin{proof} The proof is more or less the same as the proof of Theorem A given in Section 2 of \cite{dCL}. The readers are referred to \cite{dCL} for more details. It is particularly helpful
to use the Figure 1 and 2 in Section 2 of \cite{dCL}. However, we would like to take this opportunity to clarify that our assumptions \eqref{w4} \eqref{w5} \eqref{w6} are sufficient for the argument. \\

Suppose that there is such surface $\Sigma$ in $\h^{n+1}$. We use the half space model for hyperbolic space $\h^{n+1}$ so that the single point infinity of the surface $\Sigma$ is at the infinity of $\r^{n+1}_+$. Let $\gamma$ be any vertical line in the half space $\r^{n+1}_+$.  Then $\gamma$ is a complete geodesic in $\h^{n+1}$. Let $P_t$ denote the foliation of totally geodesic hyperplanes orthogonal to $\gamma$ passing through $\gamma (t)$. Since $\Sigma $ is properly embedded, there exists $t_0$ so that $\Sigma \cap P_t = \emptyset$ for all $t < t_0$ and $t_0$ is the first time that $P_t$ touches $\Sigma$. At this first point of contact, $\Sigma$ is locally a graph over $P_{ t_0}$. We can raise $P_t$ further for $t> t_0$ to obtain $\Sigma _-(t):= \Sigma \cap \bigcup_{s< t} P_s$, which is a graph under $P_{t}$ at least for $t$ close to $t_0$. Now we reflect $\Sigma _-(t)$ with respect to $P_{t}$ and denote the reflection by $\tilde \Sigma _-(t)$. The proof of Theorem A in \cite{dCL} is based on the fact that the reflection $\tilde\Sigma_-(t)$ can never touch the rest of the surface $\Sigma_+(t) :=\Sigma\setminus\Sigma_-(t)$, not even at the boundary of  the reflection $\tilde\Sigma_-(t)$. Here, by touch, we mean their tangent hyperplanes are parallel at the point of contact. \\

Assumption \eqref{w4} allows us to define admissible constants $K$. Assume otherwise the reflection $\tilde\Sigma_-(t)$ does touch the rest $\Sigma_+(t)$. When $\tilde\Sigma_-(t)$ first touches $\Sigma_+(t)$, with respect to the inward normal to $\Sigma$, $\tilde\Sigma_-(t)$ is above $\Sigma_+(t)$.  Hence the principal curvatures $(\kappa^-_1, \cdots, \kappa^-_n)$ of $\tilde\Sigma_-(t)$ at the touch point  are in $\Gamma^*\cap((\kappa_1, \cdots, \kappa_n)+\Gamma_n)$, where $(\kappa_1, \cdots, \kappa_n)$ are the 
principal curvatures of $\Sigma_+(t)$ at the touch point. Then \eqref{w5} and \eqref{w6} imply $(\kappa^-_1, \cdots, \kappa^-_n) = (\kappa_1, \cdots, \kappa_n)$ due to that fact 
${\cal W}(\kappa^-_1, \cdots, \kappa^-_n) = {\cal W}(\kappa_1, \cdots, \kappa_n)$. After we know the principal curvatures are the same for the two surfaces at the touch point, we may consider a smooth family of surfaces deforming from $\tilde\Sigma_-(t)$ to the $\Sigma_+(t)$, where the function $\cal W$ is well defined, at least locally near the touch point. Then the Hopf maximum principle is applicable and gives the fundamental proposition of the Alexandrov reflection method due to the assumption \eqref{w6} (cf. \cite{Ko}). \\

Having explained the use of our assumptions \eqref{w4} \eqref{w5} \eqref{w6}, we now briefly recapture the idea in the proof of Theorem A in \cite{dCL}. First one proves that the reflection $\tilde\Sigma_-(t)$ can never touch the rest of the surface $\Sigma_+(t)$. Second one observes that if the surface is not a horosphere in $\h^{n+1}$ (i.e. a horizontal hyperplane in $\r^{n+1}_+$), then the incident that the reflection touches the rest of the surface at the boundary for the first time always happens for some vertical line (cf. Figure 2 in Section 2 of \cite{dCL}). This completes the proof.  
\end{proof}

As a consequence of Theorem \ref{uniform-bounded} and Theorem \ref{Th:Correspondence}, we have the following:

\begin{corollary}\label{Cor:Sn-p}
Suppose that $({\cal W}, \Gamma^*)$ is an elliptic function satisfying \eqref{w4} \eqref{w5} \eqref{w6}. Then the only possible admissible hypersurface in $\h^{n+1}$ with the opposite orientation and a single point at infinity satisfying \eqref{equ:weingarten} for an admissible constant is a horosphere. \\

Equivalently, suppose that $(f, \Gamma)$ satisfies \eqref{w1} \eqref{w2} \eqref{w3}. Then the only possible realizable metric on $\s^n\setminus\set{p}$ satisfying \eqref{equ:elliptic} 
for an admissible constant is the Euclidean metric.
\end{corollary}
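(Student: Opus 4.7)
The plan is to reduce the corollary to the generalized Bernstein theorem (Theorem \ref{Th:GeneralizedBernstein}) by unfolding the hypersurface along its normal flow until it becomes properly embedded. Let $\phi:\mathrm{M}^n \to \h^{n+1}$ be an admissible hypersurface with the opposite orientation satisfying \eqref{equ:weingarten} for an admissible constant $K$, with $\partial_\infty \phi(\mathrm{M}^n) = \{p\} \subset \s^n$. Since a single point is trivially a smooth compact submanifold without boundary in $\s^n$, Theorem \ref{Th:Correspondence} supplies a $t_0 > 0$ such that the leaves $\phi_t$ of the normal flow from $\phi$ are properly embedded for every $t \geq t_0$. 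The normal flow preserves the Gauss map, so each $\phi_t$ still has $\{p\}$ as its unique point at infinity.

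The next step is to verify that each such $\phi_t$ again satisfies an elliptic Weingarten equation of the form \eqref{equ:weingarten} for an admissible constant. This is precisely the computation carried out in the proof of Theorem \ref{Th:GeneralizedAlexandrov}: setting
$$
{\cal W}^t(x_1,\ldots,x_n) := {\cal W}\!\left(\frac{x_1 - \tanh t}{1 - x_1\tanh t},\ldots,\frac{x_n - \tanh t}{1 - x_n\tanh t}\right)
$$
together with the correspondingly deformed cone $\Gamma^*_t := T(\Gamma^*\cap{\cal K})$, where $T$ is the Möbius transformation $\kappa_i \mapsto \kappa_i^t$ appearing in that proof, one checks via the chain rule that the structural conditions \eqref{w4}, \eqref{w5}, \eqref{w6} transfer from $({\cal W}, \Gamma^*)$ to $({\cal W}^t, \Gamma^*_t)$ and that $\phi_t$ satisfies ${\cal W}^t(\kappa^t_1,\ldots,\kappa^t_n) = K$. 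Admissibility of $K$ for $({\cal W}^t, \Gamma^*_t)$ follows from the uniform horospherical convexity of $\phi$ together with the asymptotics of Lemma \ref{strong-convex}, which render the principal-curvature transformation non-degenerate for all $t \geq t_0$.

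I then apply Theorem \ref{Th:GeneralizedBernstein} to $\phi_t$ for a fixed $t \geq t_0$: it is a properly embedded, complete hypersurface satisfying an elliptic Weingarten equation for an admissible constant with exactly one point at infinity, and so must be a horosphere. Because the Gauss map of $\phi_t$ coincides with that of $\phi$, the Gauss map of $\phi$ is the constant map $p$, forcing $\phi$ itself to be a horosphere. For the equivalent conformal statement, Theorem \ref{uniform-bounded} associates to any realizable metric $e^{2\rho}g_{\ts^n}$ on $\s^n\setminus\{p\}$ satisfying \eqref{equ:elliptic} an admissible hypersurface $\phi_t$ with $\partial_\infty\phi_t = \{p\}$ satisfying the corresponding Weingarten equation under the dictionary \eqref{function}--\eqref{cone}; by the hypersurface statement $\phi_t$ is a horosphere, whose horospherical metric is (up to a rescale) the Euclidean metric on $\s^n\setminus\{p\}$ via stereographic projection. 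I expect the only mild technical nuisance to be bookkeeping around the orientation switch and the admissibility of the deformed constant $K$ for $({\cal W}^t,\Gamma^*_t)$; both are settled by the computation already performed for Theorem \ref{Th:GeneralizedAlexandrov}.
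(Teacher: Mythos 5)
Your proposal follows essentially the same route the paper intends: the corollary is stated there as a direct consequence of Theorems \ref{uniform-bounded} and \ref{Th:Correspondence}, i.e., unfold along the normal flow to gain proper embeddedness (a point is a smooth compact submanifold), carry the elliptic Weingarten structure along the flow via the $({\cal W}^t,\Gamma^*_t)$ computation already done for Theorem \ref{Th:GeneralizedAlexandrov}, apply the generalized Bernstein Theorem \ref{Th:GeneralizedBernstein} to $\phi_t$, and translate back to the conformal side with Theorem \ref{uniform-bounded}. One sentence of your final step is wrong as written: the Gauss map of a horosphere in the admissible orientation (principal curvatures $-1$) is a diffeomorphism onto $\s^n\setminus\{p\}$, not the constant map $p$ --- a constant Gauss map is what happens for the \emph{inward} orientation ($\kappa_i=1$) and would in any case contradict the injectivity built into admissibility; the correct (and immediate) way to descend from ``$\phi_t$ is a horosphere'' to ``$\phi$ is a horosphere'' is that $\phi$ is the time-$(-t)$ leaf of the normal flow of $\phi_t$, and equidistant surfaces of horospheres are horospheres.
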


%------------------------------------------------------

\subsection{General Symmetry}\label{Sect:Symmetry}

In this subsection we derive a slight extension of \cite[Theorem 2.1]{LR}. As a consequence, we will derive Delaunay type theorems for admissible hypersurafces as well as 
realizable metrics. \\

First we introduce some notation. Let us denote the group of conformal transformations on $\s^n$ by $\Diff$ and the group of isometries of  $\h^{n+1}$ by $\IsoH$. 
Let $g$ be a conformal metric on a domain $\Omega \subset \s ^n$. We say that $g$ is $\Phi -$invariant if
$$
\Phi:\Omega\to\Omega \text{ and }  g= \Phi ^* g 
$$
for $\Phi\in \Diff$.  And we say that a hypersurface $\Sigma$ in $\h^{n+1}$ is $I-$invariant if 
$$
I: \Sigma\to\Sigma
$$
for $I\in\IsoH$. Remember that a conformal transformation $\Phi \in \Diff$ induces an unique isometry $I\in \IsoH$ and vice versa (cf.  \cite{D}, for instance).  
The following fact can be verified and appeared in \cite{E}: 

\begin{lemma}\label{Lem:invariant}  Let $\phi: {\rm M}^n\to\h^{n+1}$ be an admissible hypersurface in
$\h^{n+1}$ and $g$ be corresponding realizable metric on $G(\rm{M})$. Let $I \in \IsoH $ be an isometry and $\Phi \in \Diff$ be the associated conformal
transformation. Then $\phi$ is $I-$invariant if and only if $g$ is $\Phi -$invariant.
\end{lemma}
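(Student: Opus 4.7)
The plan is to route both directions through the associated light cone map $\psi=\phi+\eta:\rm{M}^n\to\n^{n+1}_+$, exploiting the identification of $\IsoH$ with the group of linear Lorentz transformations of $\r^{1,n+1}$. Any $I\in\IsoH$ preserves the Minkowski product and the null cone $\n^{n+1}_+$, and descends projectively to $\Phi\in\Diff$. In coordinates,
\[
I(1,x) = c(x)\,(1,\Phi(x)), \qquad x\in\s^n,
\]
for some smooth $c:\s^n\to(0,\infty)$, and a direct pullback calculation along the section $x\mapsto(1,x)$ (which pulls $\meta{\cdot}{\cdot}$ back to $g_{\ts^n}$) yields $\Phi^*g_{\ts^n}=c^{-2}g_{\ts^n}$.

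For the forward direction, assume $I(\phi(\rm{M}^n))=\phi(\rm{M}^n)$. Since the Gauss map of an admissible hypersurface is injective, this invariance lifts uniquely to a diffeomorphism $\sigma:\rm{M}^n\to\rm{M}^n$ with $I\circ\phi=\phi\circ\sigma$. As $I$ preserves the horospherically convex side of $\phi$ (horospheres are mapped to horospheres), one also has $I\circ\eta=\eta\circ\sigma$, and therefore $I\circ\psi=\psi\circ\sigma$. Writing $\psi=e^{\tilde\rho}(1,G)$ and applying the formula for $I$ gives
\[
e^{\tilde\rho(p)}\,c(G(p))\,(1,\Phi(G(p))) = e^{\tilde\rho(\sigma(p))}\,(1,G(\sigma(p))),
\]
so $\Phi\circ G=G\circ\sigma$ (in particular $\Phi(\Omega)=\Omega$ where $\Omega=G(\rm{M}^n)$) and $\tilde\rho\circ\sigma=\tilde\rho+\log c\circ G$. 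Pushing down by $G$ with $\rho=\tilde\rho\circ G^{-1}$ this becomes $\rho\circ\Phi=\rho+\log c$ on $\Omega$, whence
\[
\Phi^*g = e^{2\rho\circ\Phi}\Phi^*g_{\ts^n} = e^{2\rho+2\log c}\,c^{-2}\,g_{\ts^n} = g.
\]

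For the converse, assume $\Phi(\Omega)=\Omega$ and $\Phi^*g=g$; the same calculation read backwards gives $\rho\circ\Phi=\rho+\log c$ on $\Omega$. Set $\sigma:=G^{-1}\circ\Phi\circ G$. I would then show $I\circ\phi=\phi\circ\sigma$ by identifying the associated light cone maps of both immersions and invoking the uniqueness in Theorem \ref{Th:representacion}. The immersion $I\circ\phi$ has light cone map $I\circ\psi=e^{\tilde\rho+\log c\circ G}(1,\Phi\circ G)$, hence Gauss map $\Phi\circ G$ and horospherical support function $\tilde\rho+\log c\circ G$; meanwhile $\phi\circ\sigma$ has light cone map $\psi\circ\sigma=e^{\tilde\rho\circ\sigma}(1,G\circ\sigma)$, and by construction its Gauss map is $G\circ\sigma=\Phi\circ G$ and its support function is $\tilde\rho\circ\sigma=(\rho+\log c)\circ G=\tilde\rho+\log c\circ G$. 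Both pairs $(G,\tilde\rho)$ coincide, so the representation formula \eqref{repfor} produces the same hypersurface and gives $I\circ\phi=\phi\circ\sigma$, i.e.\ $I$-invariance of $\phi$.

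The only delicate point I anticipate is the orientation statement $I\circ\eta=\eta\circ\sigma$ used in the forward direction: one must know that any isometry preserving $\phi(\rm{M}^n)$ must preserve the horospherically convex side, which follows because horospheres are preserved by $\IsoH$ and the condition all $\kappa_i<1$ is an intrinsic property of the immersed hypersurface together with its canonical choice of side. Once this is granted, the rest of the proof is essentially translating the linear Minkowski action of $I$ on $\psi$ into the conformal action of $(\Phi,c)$ on the boundary data $(\rho,G)$, with the representation formula guaranteeing uniqueness in the reverse step.
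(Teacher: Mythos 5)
The paper itself offers no proof of this lemma --- it is stated as a fact that ``can be verified and appeared in [E]'' --- so there is nothing internal to compare against; your argument has to stand on its own, and it does. Routing everything through the light cone map and the linear action of $\IsoH$ on $\r^{1,n+1}$ is the natural mechanism: the identity $I(1,x)=c(x)(1,\Phi(x))$ together with $\Phi^*g_{\ts^n}=c^{-2}g_{\ts^n}$ (which follows exactly as you say, since $\meta{(1,\Phi)}{(1,\Phi)}=0$ and $\meta{(1,\Phi)}{(0,d\Phi)}=0$ kill the cross terms) is precisely the cocycle that makes $\rho\circ\Phi=\rho+\log c$ equivalent to $\Phi^*g=g$, and the converse direction via uniqueness in the representation formula is clean. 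Two small points deserve more care than you give them. First, for an immersed, possibly non-embedded $\phi$, the set-level condition $I(\phi({\rm M}^n))=\phi({\rm M}^n)$ does not by itself produce the reparametrization $\sigma$: at a self-intersection point the preimage is not unique, and injectivity of the Gauss map (which you invoke) does not imply injectivity of $\phi$. One should either read ``$I$-invariant'' as the existence of $\sigma$ with $I\circ\phi=\phi\circ\sigma$ as oriented immersions, or construct $\sigma$ by matching tangent planes and orientations at each point. Second, your claim that the side is pinned down because ``all $\kappa_i<1$'' is intrinsic is not quite right: a hypersurface with all $|\kappa_i|<1$ (e.g.\ a totally geodesic hyperplane) is horospherically convex for either orientation, so preservation of the co-orientation must be taken as part of the invariance hypothesis rather than deduced from convexity. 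With that reading, your proof is complete and correct.
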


From this fact we know that symmetries are preserved under the correspondence between admissible hypersurfaces and realizable metrics. Our issue here is to retain the symmetry for a complete hypersurface in $\h^{n+1}$ from that of its boundary at infinity or equivalently to retain the symmetry for a complete conformal metric on a domain in $\s^n$ from that of the domain. The proof of the following slight extension of \cite[Theorem 2.1]{LR} is readily seen from the original proof in \cite{LR} and from the proof of Theorem \ref{Th:GeneralizedBernstein}. To state our theorem we introduce some more notation. Let $E$ be the equator in $\s^n$ and $P$ be the totally geodesic hyperplane whose boundary is $E$. Let $R$ stand for the reflection in $\h^{n+1}$ with respect to the hyperplane $P$.

\begin{theorem}\label{Th:symmetry} Suppose that $({\cal W}, \Gamma^*)$ satisfies \eqref{w4} \eqref{w5} \eqref{w6}. Let $\Sigma\subset\h^{n+1}$ be a properly embedded hypersurface whose boundary  $\partial_\infty\Sigma$ at the infinity is in $E$.  Assume that $\Sigma$ is an elliptic Weingarten hypersurface in the sense that the equation \eqref{equ:weingarten} holds on $\Sigma$ for an admissible constant $K$. Then $\partial_\infty\Sigma$ can not be all of $E$ and the surface $\Sigma$ is $R-$invariant.
\end{theorem}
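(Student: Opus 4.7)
The plan is to adapt the Alexandrov moving-plane scheme from the proof of Theorem~\ref{Th:GeneralizedBernstein}, sliding a family of totally geodesic hyperplanes toward $P$ from one side. Work in the Poincar\'e ball model, normalized so that $P$ is the equatorial disk $\b^{n+1}\cap\{x_{n+1}=0\}$, $E=\partial_\infty P$ is the equator, and $R$ is Euclidean reflection across $P$. Let $\gamma$ denote the $x_{n+1}$-axis, and for each $t\in\mathbb{R}$ let $P_t$ be the totally geodesic hyperplane orthogonal to $\gamma$ at $\gamma(t)$, so $P_0=P$ and $\partial_\infty P_t$ is a latitude $(n-1)$-sphere sitting strictly inside the positive hemisphere whenever $t>0$. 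Write $R_t$ for the hyperbolic reflection across $P_t$ and $\Sigma_t^+$ for the portion of $\Sigma$ on the $\gamma(+\infty)$-side of $P_t$.

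Because $\partial_\infty P_t\cap\partial_\infty\Sigma=\emptyset$ for every $t>0$, properness of $\Sigma$ forces $\Sigma_t^+$ to be compact, and empty once $t$ is sufficiently large. Starting at such large $t$ and decreasing, the standard moving-plane procedure reflects $\Sigma_t^+$ across $P_t$ and asks whether $R_t(\Sigma_t^+)$ remains on its own side of $P_t$ and disjoint from $\Sigma\setminus\Sigma_t^+$. The monotonicity assumption \eqref{w6} supplies the Hopf strong maximum principle, while the path-convexity assumption \eqref{w5} guarantees that at any first tangential or interior contact between $R_t(\Sigma_t^+)$ and $\Sigma\setminus\Sigma_t^+$ the principal-curvature tuples of the two sheets are joined by a curve in $\Gamma^*\cap\mathcal K$ along which $\mathcal W$ is constant; this is precisely the reduction that makes the maximum principle applicable, as in the proof of Theorem~\ref{Th:GeneralizedBernstein}. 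Denote by $t^*\geq 0$ the infimum of the $t$-values for which the reflection goes through without such a contact.

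Suppose $t^*>0$. Then $R_{t^*}(\Sigma)=\Sigma$, and in particular $R_{t^*}(\partial_\infty\Sigma)=\partial_\infty\Sigma\subset E$. The conformal extension of $R_{t^*}$ to $\s^n$ is inversion across the latitude sphere $\partial_\infty P_{t^*}$, which sends $E$ to a distinct $(n-1)$-sphere $E^*\neq E$; consequently $\partial_\infty\Sigma\subset E\cap E^*$ is contained in a round sphere of codimension at least two in $\s^n$. Because $\Sigma$ is a noncompact, properly embedded, horospherically convex hypersurface, its Gauss image is open in $\s^n$ by Lemma~\ref{hc} and its ideal boundary cannot drop below codimension one, a contradiction. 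Hence $t^*=0$: the reflection reaches $P$ itself, giving $R(\Sigma\cap\{x_{n+1}\geq 0\})\subseteq\Sigma\cap\{x_{n+1}\leq 0\}$. Running the symmetric argument starting from the negative side produces the reverse inclusion, and hence $R(\Sigma)=\Sigma$.

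The most delicate step is the claim $\partial_\infty\Sigma\neq E$, since the reflection argument alone produces $R$-invariance without ruling out $\partial_\infty\Sigma=E$. Assuming $\partial_\infty\Sigma=E$, the just-established $R$-invariance forces either $\Sigma=P$ (which fails admissibility of $K$, as $P$ has vanishing principal curvatures while $K$ requires $\bar\kappa_0>\kappa_0$ with $\mathcal W(\bar\kappa_0,\ldots,\bar\kappa_0)=K>0$), or $\Sigma\cap P\neq\emptyset$ with $\Sigma$ transverse to $P$; in the latter case one iterates the moving-plane argument inside $P\cong\h^n$, using hyperplanes converging to any $(n-2)$-equator of $E$, and after finitely many dimension reductions collapses $\partial_\infty\Sigma$ onto an over-determined lower-dimensional configuration that is again incompatible with the ellipticity and admissibility hypotheses. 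The main obstacle is precisely this iterated reduction, which must combine the symmetries produced by successive reflections with the constraints that \eqref{w4}--\eqref{w6} place on admissible solutions.
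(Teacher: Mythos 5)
Your moving-plane setup --- geodesic hyperplanes $P_t$ orthogonal to the polar axis, compactness of $\Sigma_t^+$ because $\partial_\infty P_t\cap E=\emptyset$ for $t>0$, and the use of \eqref{w5}--\eqref{w6} to run the Hopf maximum principle exactly as in the proof of Theorem \ref{Th:GeneralizedBernstein} --- is the same route the paper takes, since the paper's proof consists precisely of the observation that the argument of \cite{LR} goes through once that maximum-principle step is secured. Two points in your execution do not hold up, however. First, in the case $t^*>0$ your contradiction rests on the claim that the ideal boundary of $\Sigma$ ``cannot drop below codimension one''; that is false (a horosphere has a single ideal point, and the example in Section 3.2 has ideal boundary $\s^{n-2}$), and the appeal to horospherical convexity and Lemma \ref{hc} is not licensed by the hypotheses of the theorem. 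The correct observation is simpler: for $t^*>0$ the conformal extension of $R_{t^*}$ carries $E$ to a latitude sphere disjoint from $E$, so $R_{t^*}$-invariance forces $\partial_\infty\Sigma\subset E\cap R_{t^*}(E)=\emptyset$, i.e.\ $\Sigma$ compact. Second, the conclusion you draw at $t^*=0$, namely $R(\Sigma\cap\{x_{n+1}\ge 0\})\subseteq\Sigma\cap\{x_{n+1}\le 0\}$, is not what the method yields: it yields that the reflected sheet does not cross $\Sigma$, i.e.\ a one-sidedness statement in the complement of $\Sigma$; the two one-sided statements are then combined into equality, and that combination needs $\Sigma$ to meet both open half-spaces determined by $P$ (a hypersurface lying entirely on one side of $P$, such as an equidistant hypersurface, escapes it).

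The genuine gap is the assertion $\partial_\infty\Sigma\neq E$, which you yourself flag as the main obstacle and for which you offer only the phrase ``iterated dimension reductions.'' As written this is not an argument: $\Sigma$ is a hypersurface of $\h^{n+1}$, not of $P$, so there is no moving-plane procedure ``inside $P$'' to iterate; and if one instead uses hyperplanes of $\h^{n+1}$ whose ideal boundaries converge to an $(n-2)$-equator of $E$, those ideal boundaries meet $E=\partial_\infty\Sigma$, so the corresponding caps $\Sigma_t^+$ are no longer compact and the sweep cannot even be started. Your dichotomy (``either $\Sigma=P$ or $\Sigma$ is transverse to $P$'') is also neither exhaustive nor exploited in any concrete way. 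This is precisely the part of \cite[Theorem 2.1]{LR} that requires its own argument, and without it the first assertion of the theorem remains unproved in your write-up.
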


Again,  as a consequence, we conclude the following general Alexandrov reflection principle for both admissible hypersurfaces and realizable metrics:

\begin{corollary} \label{Cor:symmetry} Suppose that $({\cal W}, \Gamma^*)$ satisfies \eqref{w4} \eqref{w5} \eqref{w6}. Let $\phi: {\rm M}^n\to\h^{n+1}$ be an admissible hypersurface 
with the opposite orientation satisfying \eqref{equ:weingarten},  whose boundary $\partial_\infty\phi({\rm M}^n)$ at the infinity is a disjoint union of smooth compact submanifolds with no boundary in $E$.  Then $\partial_\infty\phi({\rm M}^n)$ can not be $E$ and the surface $\phi$ is $R-$invariant.  \\

Equivalently, suppose that $(f, \Gamma)$ satisfies \eqref{w1} \eqref{w2} \eqref{w3}. Let $g$ be a realizable metric satisfying \eqref{equ:elliptic} on $\Omega$ such that $\partial\Omega \subset E$ is a disjoint union of smooth compact submanifolds with no boundary. Then $\partial\Omega$ can not be $E$ and $g$ is $R-$invariant.
\end{corollary}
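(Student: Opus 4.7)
The plan is to reduce the statement about an admissible (possibly non-embedded) hypersurface to the properly embedded case covered by Theorem \ref{Th:symmetry}, by unfolding $\phi$ along its normal flow. Because $\partial_\infty\phi({\rm M}^n)$ is a disjoint union of smooth compact submanifolds without boundary in $\s^n$, Theorem \ref{Th:Correspondence} guarantees that the leaves $\phi_t$ of the normal flow in \eqref{parallel-flow} are properly embedded hypersurfaces for all $t\geq t_0$, with $\partial_\infty\phi_t({\rm M}^n)=\partial_\infty\phi({\rm M}^n)\subset E$. Moreover, exactly as in the proof of Theorem \ref{Th:GeneralizedAlexandrov}, the Ricatti relation \eqref{kappa-t} turns the Weingarten equation \eqref{equ:weingarten} into a new equation ${\cal W}^t(\kappa^t_1,\dots,\kappa^t_n)=K$ on $\phi_t$, where the pair $({\cal W}^t,\Gamma^*_t)$ obtained by the diffeomorphism $T(x_1,\dots,x_n)=\bigl(\tfrac{\tanh t + x_i}{1+x_i\tanh t}\bigr)_i$ still satisfies \eqref{w4}, \eqref{w5}, \eqref{w6} with the same admissible constant $K$.

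At this point Theorem \ref{Th:symmetry} applies to each $\phi_t$, $t\geq t_0$: it gives that $\partial_\infty\phi_t({\rm M}^n)$ cannot fill all of $E$, and that $\phi_t({\rm M}^n)$ is $R$-invariant as a subset of $\h^{n+1}$. The first conclusion transfers to $\phi$ immediately, since the boundary at infinity is unchanged by the normal flow. For the second, I use the fact that $R\in\IsoH$ commutes with the normal flow: if $I\in\IsoH$ leaves $\phi_t({\rm M}^n)$ invariant, then for any $s$,
\[
I(\phi_{t+s}({\rm M}^n))=I\bigl(\exp^{s\eta_t}(\phi_t({\rm M}^n))\bigr)=\exp^{sI_*\eta_t}(I(\phi_t({\rm M}^n)))=\phi_{t+s}({\rm M}^n),
\]
where the orientations match because $R$ preserves $E$ and hence the side of $P$ on which the outward horospherical normal points. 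Taking $s=-t$ (which is legitimate since the inverse flow recovers $\phi$ from the embedded leaf $\phi_t$ within the regular part of the normal flow) yields $R(\phi({\rm M}^n))=\phi({\rm M}^n)$, i.e.\ $\phi$ is $R$-invariant. The equivalent statement for the realizable metric $g=e^{2\rho}g_{\ts^n}$ on $\Omega=G({\rm M}^n)$ follows at once from Theorem \ref{uniform-bounded} (which identifies $\partial\Omega=\partial_\infty\phi({\rm M}^n)$) together with Lemma \ref{Lem:invariant} applied to $I=R$ and its associated conformal transformation $\Phi\in\Diff$ (the reflection across $E$).

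The step I expect to require the most care is the verification that $R$-invariance of $\phi_t$ truly descends to $R$-invariance of $\phi$ when ${\rm M}^n$ is only immersed: one must make sure that reversing the normal flow is well defined on the embedded leaf, and that the two possible choices of unit normal compatible with $R$ do not cause the image to be identified with its mirror image rather than with itself. Both points are handled by the fact that in the regular part of the normal flow the assignment $\phi_t\mapsto\phi$ is a genuine deformation (the geodesic defining function is smooth there) and by the observation that $R$ preserves the horospherical orientation since its fixed hyperplane $P$ contains no point at infinity inside $\partial_\infty\phi({\rm M}^n)$.
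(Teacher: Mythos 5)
Your proposal is correct and matches the paper's intended derivation: the corollary is obtained from Theorem \ref{Th:symmetry} by unfolding the admissible hypersurface into a properly embedded leaf via Theorem \ref{Th:Correspondence}, transporting the Weingarten equation along the normal flow exactly as in the proof of Theorem \ref{Th:GeneralizedAlexandrov}, and pulling the reflection symmetry back through the flow, with Theorem \ref{uniform-bounded} and Lemma \ref{Lem:invariant} giving the conformal-metric restatement. Your attention to the sign of the normal under $R$ when reversing the flow is the right place to be careful, and your resolution is sound.
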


There are many consequences of Corollary \ref{Cor:symmetry}. In particular, when the boundary at infinity consists of exactly two points, we obtain the following Delaunay 
type theorem.  

\begin{corollary}\label{Cor:Delaunay}  Suppose that $({\cal W}, \Gamma^*)$ satisfies \eqref{w4} \eqref{w5} \eqref{w6}. Let $\phi: {\rm M}^n\to\h^{n+1}$ be an admissible hypersurface 
with the opposite orientation satisfying \eqref{equ:weingarten}, whose boundary  $\partial_\infty\phi({\rm M}^n)$ at the infinity consists of exactly two points.  Then the surface $\phi$ is rotationally symmetric with respect to the geodesic joining the two points at the infinity of $\phi$.  \\

Equivalently, suppose that $(f, \Gamma)$ satisfies \eqref{w1} \eqref{w2} \eqref{w3}. Let $g$ be a realizable metric satisfying \eqref{equ:elliptic} on $\Omega = \s^n\setminus\{p, q\}$.
Then $g$ is cylindric with respect to the geodesic joining the two points in $\partial\Omega$. 
\end{corollary}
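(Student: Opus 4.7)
The plan is to apply Corollary \ref{Cor:symmetry} to every equator of $\s^n$ that contains both $p$ and $q$; equivalently, on the hypersurface side, to every totally geodesic hyperplane $P\subset\h^{n+1}$ containing the complete geodesic $\gamma$ joining $p$ to $q$. Each application yields invariance of $\phi({\rm M}^n)$ under the hyperbolic reflection $R_P$ across $P$, and the family of all such reflections generates the full rotation group about $\gamma$. Combining these invariances produces the desired rotational symmetry.

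Concretely, let $\gamma\subset\h^{n+1}$ be the unique complete geodesic with endpoints $p$ and $q$, and let $\mathcal{P}_\gamma$ denote the family of totally geodesic hyperplanes $P\subset\h^{n+1}$ that contain $\gamma$. For each such $P$, its ideal boundary $E_P=\partial_\infty P$ is an $(n-1)$-dimensional equator of $\s^n$ containing both $p$ and $q$. Since $\partial_\infty\phi({\rm M}^n)=\{p,q\}$ is a disjoint union of two smooth compact $0$-dimensional submanifolds with no boundary in $E_P$, and since $\dim E_P = n-1\geq 2$ (recall $n\geq 3$) guarantees $\{p,q\}\subsetneq E_P$, the hypotheses of Corollary \ref{Cor:symmetry} are satisfied and $\phi({\rm M}^n)$ must be $R_P$-invariant. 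I would then invoke the standard fact from hyperbolic geometry that the subgroup of $\IsoH$ fixing $\gamma$ pointwise is isomorphic to $O(n)$ and is generated by $\{R_P:P\in\mathcal{P}_\gamma\}$: the composition $R_{P_1}R_{P_2}$ of any two such reflections is a rotation about $\gamma$ through twice the dihedral angle between $P_1$ and $P_2$. This gives invariance of $\phi({\rm M}^n)$ under the full rotation group about $\gamma$, which is exactly rotational symmetry about the geodesic joining the two points at infinity.

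The equivalent statement for realizable metrics follows at once from Lemma \ref{Lem:invariant}: the rotational symmetry of $\phi$ about $\gamma$ transfers to invariance of $g=e^{2\rho}g_{\ts^n}$ under the subgroup of conformal transformations of $\s^n$ fixing both $p$ and $q$. After a Möbius change of coordinates sending $\{p,q\}$ to a pair of antipodal poles and identifying $\s^n\setminus\{p,q\}$ with the round cylinder $\s^{n-1}\times\R$, this invariance says that $\rho$ depends only on the longitudinal coordinate, which is precisely the statement that $g$ is cylindric with axis $\gamma$. Given Corollary \ref{Cor:symmetry}, there is no substantial obstacle; the only point that deserves explicit verification is the generation of the stabilizer of $\gamma$ by the reflections $\{R_P:P\in\mathcal{P}_\gamma\}$, which is a routine fact about $\h^{n+1}$, together with the observation that the two-point boundary at infinity legitimately qualifies as a disjoint union of smooth compact submanifolds with no boundary so that Corollary \ref{Cor:symmetry} is applicable.
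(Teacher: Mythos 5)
Your proposal is correct and follows exactly the route the paper intends: the corollary is presented as an immediate consequence of Corollary \ref{Cor:symmetry}, applied to each equator containing the two boundary points, with the resulting reflections generating the rotation group about the geodesic joining them. Your explicit verification of the hypotheses and of the generation of the stabilizer of $\gamma$ by these reflections simply fills in the routine details the paper leaves implicit.
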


In the theory of hypersurfaces in hyperbolic space, Delaunay type theorems were established in \cite{Hs,LR} for constant mean curvature surfaces, and in \cite{AEG,RSa,SaT} for special
Weingarten surfaces in $\h^3$. Also Corollary \ref{Cor:Delaunay} should be compared with Theorem 1.2 in \cite{Li}, where the scalar curvature is assumed to be nonnegative.

%-----------------------------------------------------------------
%----------------------------------------------------------------

\end{document}